\def\O{{\mathcal O}}
\def\X{{\mathcal X}}
\def\sX{{\mathcal X}}
\def\Y{{\mathcal Y}}
\def\U{{\mathcal U}}
\def\V{{\mathcal V}}
\def\W{{\mathcal W}}
\def\sY{{\mathcal Y}}
\def\sD{{\mathcal D}}
\def\sE{{\mathcal E}}
\def\sZ{{\mathcal Z}}
\def\sP{{\mathcal P}}
\def\sQ{{\mathcal Q}}
\def\sR{{\mathcal R}}
\def\F{{\mathbf F}}
\def\ov{\overline}
\def\lra{\longrightarrow}
\def\val{{\rm val}}
\def\f{{\rm f}}
\numberwithin{equation}{section} \numberwithin{figure}{section}
\DeclareMathOperator{\Gal}{Gal}
\DeclareMathOperator{\Spec}{Spec}
\DeclareMathOperator{\Hom}{Hom}
 \DeclareMathOperator{\Norm}{N}
\DeclareMathOperator{\Frob}{Frob} \DeclareMathOperator{\dens}{dens}
\DeclareMathOperator{\Irr}{Irr} \DeclareMathOperator{\Res}{R}
\DeclareMathOperator{\Cl}{Cl} \DeclareMathOperator{\HH}{H}
\newcommand\FF{\mathbf{F}}
\newcommand\PP{\mathbf{P}}
\newcommand\ZZ{\mathbf{Z}}
\newcommand\NN{\mathbf{N}}
\newcommand\QQ{\mathbf{Q}}
\newcommand\CC{\mathbf{C}}
\newcommand\GG{\mathbf{G}}
\newcommand\Gm{\GG_\mathrm{m}}
\newcommand\OO{\mathcal{O}}
\newtheorem{theorem}{Theorem}[section]
\newtheorem{corollary}[theorem]{Corollary}
\newtheorem{lemma}[theorem]{Lemma}
\newtheorem{proposition}[theorem]{Proposition}
\theoremstyle{definition}
\newtheorem{definition}[theorem]{Definition}
\newtheorem{example}[theorem]{Example}
\newtheorem{remark}[theorem]{Remark}
\title[Pseudo-split fibres and arithmetic surjectivity]{Pseudo-split fibres and arithmetic surjectivity}
\author{Daniel Loughran}
\address{School of Mathematics, University of Manchester, Oxford Road, Manchester, M13 9PL, United Kingdom}
\email{daniel.loughran@manchester.ac.uk}
\author{Alexei N. Skorobogatov}
\address{Imperial College London, Department of Mathematics, London SW7 2AZ, United Kingdom \emph{and} Institute for the Information Transmission Problems, Russian Academy of Sciences, 19 Bolshoi Karetnyi, Moscow, 127994 Russia}
\email{a.skorobogatov@imperial.ac.uk}
\author{Arne Smeets}
\address{Radboud Universiteit Nijmegen, Heyendaalseweg 135, 6525 AJ Nijmegen, the Netherlands \emph{and} KU Leuven, Departement Wiskunde, Celestijnenlaan 200B, 3001 Heverlee, Belgium} 
\email{arnesmeets@gmail.com}
\begin{document}

\begin{abstract} Let $f: X \to Y$ be a dominant morphism of smooth, proper and geometrically integral varieties over a number field $k$, with geometrically integral generic fibre. We give a necessary and sufficient geometric criterion for the induced map $X(k_v) \to Y(k_v)$ to be surjective for almost all places $v$ of $k$. This generalises a result of Denef which had previously been conjectured by Colliot-Th\'el\`ene, and can be seen as an optimal geometric version of the celebrated Ax--Kochen theorem. \vskip 10pt
Soit $f: X \to Y$ un morphisme dominant de vari\'et\'es lisses, propres et g\'eom\'etriquement int\`egres d\'efinies sur un corps de nombres $k$, dont la fibre g\'en\'erique est g\'eom\'etriquement int\`egre. Nous donnons un crit\`ere g\'eometrique, \`a la fois n\'ecessaire et suffisant, pour que l'application induite $X(k_v) \to Y(k_v)$  soit surjective pour presque toute place $v$ de $k$. Ceci g\'en\'eralise un r\'esultat de Denef pr\'ec\'edemment conjectur\'e par Colliot-Th\'el\`ene. Notre r\'esultat peut \^etre vu comme une version g\'eom\'etrique optimale du c\'el\`ebre th\'eor\`eme de Ax--Kochen.
\end{abstract}

\maketitle

\tableofcontents

\section{Introduction}

\subsection{} 
A famous theorem of Ax--Kochen \cite{AxKochen} states that any homogenous polynomial over $\QQ_p$
of degree $d$ in at least $d^2 + 1$ variables has a non-trivial zero, provided that $p$ avoids a certain finite
exceptional set of primes depending only on $d$. This was originally proved using model theory. Denef recently found  purely algebro-geometric proofs \cite{Denefbis, Denef}. In \cite{Denef}, he did so by proving a more general conjecture of Colliot-Th\'el\`ene \cite[\S 3, Conjecture]{CTCRAS}. 

The essential notion (first introduced by the second author in \cite[Definition~0.1]{Sko96}) appearing in this conjecture is that of a \emph{split scheme}:


\begin{definition} \label{def:split}
Let $k$ be a perfect field. A scheme $X$ of finite type over $k$ is called \emph{split} if $X$ contains an irreducible component of multiplicity $1$ which is geometrically irreducible.
\end{definition}

Here the \emph{multiplicity} of an irreducible component $Z$ of $X$  is the length of the local ring of $X$ at the generic point of $Z$. 
In particular, it has multiplicity $1$ if and only if it is generically reduced. Denef's result \cite[Theorem~1.2]{Denef} is the following.

\begin{theorem}[Denef] \label{deneftheorem}
Let $f: X \to Y$ be a dominant morphism of smooth, proper, geometrically integral varieties over a number field $k$, with geometrically integral generic fibre. Assume that for every modification $f': X' \to Y'$ of $f$ with $X'$ and $Y'$ smooth
such that the generic fibres of $f$ and $f'$ are isomorphic, the fibre $(f')^{-1}(D)$ is a split $\kappa(D)$-variety for every $D~\in~(Y')^{(1)}$.

Then $Y(k_v)=f(X(k_v))$ for all but finitely many places $v$ of $k$.
\end{theorem}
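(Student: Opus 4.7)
The plan is to reduce the local surjectivity claim to finding a smooth $\kappa_v$-rational point on a carefully chosen special fibre, at which point Hensel's lemma and the Lang--Weil estimates perform the lifting. The split-fibre hypothesis enters to guarantee that, after an appropriate modification of the base, such a point is available.

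First I would spread $f$ out to a morphism $\mathcal{X} \to \mathcal{Y}$ of smooth proper schemes over $\OO_{k,S}$ for a finite set $S$ of places of $k$. For $v \notin S$ and $y \in Y(k_v)$, properness extends $y$ uniquely to a section $\tilde y: \Spec \OO_v \to \mathcal{Y}$ with reduction $\bar y \in \mathcal{Y}(\kappa_v)$, and pulling $\mathcal{X}$ back gives a proper $\OO_v$-scheme $Z_y := \mathcal{X} \times_\mathcal{Y} \Spec \OO_v$ with geometrically integral generic fibre $X_y$. The task reduces to exhibiting an element of $Z_y(\OO_v)$. When $\bar y$ lies in the open locus of $\mathcal{Y} \otimes \kappa_v$ over which $f$ is smooth with geometrically integral fibres, the special fibre of $Z_y$ is geometrically integral, Lang--Weil supplies a smooth $\kappa_v$-point for all but finitely many $v$, and Hensel lifts it. The real content is the case where $\bar y$ falls into the bad locus.

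To handle this I would invoke the split-fibre hypothesis through a reduction to codimension one. Suppose first that $\bar y$ lies on the closure in $\mathcal{Y}$ of a single prime divisor $D$ of $Y$: the hypothesis, applied to $f$ itself, ensures $f^{-1}(D)$ is split, so after spreading out its geometrically integral multiplicity-one component and applying Lang--Weil plus Hensel in an \'etale neighbourhood of a smooth point on that component, I obtain a $k_v$-point of $X$ mapping to $y$. For reductions whose centre has higher codimension in $Y$, I would blow up the bad locus along smooth centres to obtain a smooth modification $\pi: Y' \to Y$, construct a smooth modification $f': X' \to Y'$ of $f$ with isomorphic generic fibre by Hironaka resolution of the closure of the graph in $X \times_Y Y'$, and lift $\tilde y$ through $\pi$ by the valuative criterion. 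The new reduction $\bar y'$ then lies on an exceptional divisor $E \subset Y'$, the hypothesis gives that $(f')^{-1}(E)$ is split, and the previous argument applied to $f'$ produces a $k_v$-point of $X'$, hence of $X$, lifting $y$.

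The principal obstacle is termination and uniformity. One must produce, from $f$ alone, a finite family of modifications whose exceptional divisors together with prime divisors of $Y$ account for the centres of every possible reduction $\bar y$, at every place $v \notin S$, rather than a modification depending on each individual point. This should follow from an embedded resolution argument applied to the non-smooth locus of $f$ in characteristic zero, carried out once and for all over an enlarged $\OO_{k,S}$, so that every divisorial valuation on $Y$ arising as the centre of some such $\tilde y$ has divisorial centre on a fixed smooth modification to which the hypothesis applies. Once this uniformity is secured, the exceptional set of places is simply $S$ enlarged by the finitely many $v$ at which Lang--Weil does not yet yield the required $\kappa_v$-point count and at which the constructed models acquire bad reduction.
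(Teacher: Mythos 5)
There is a genuine gap, and it is exactly at the point you flag as ``termination and uniformity'': the reduction of an arbitrary $\OO_v$-point to the transversal, codimension-one case cannot be achieved by a single modification produced by embedded resolution of the non-smooth locus. A section $\tilde y:\Spec \OO_v\to\mathcal{Y}$ whose reduction lies in the discriminant can meet it with arbitrarily large order of contact, and the associated contact data (in effect, the multiplicities $\val_v$ of the local equations of the boundary divisors along $\tilde y$) ranges over an infinite set; correspondingly, the divisorial valuations one would need to ``realise as divisors on a fixed smooth modification'' form an infinite family (blow up a point, then a point on the exceptional divisor, and so on). No fixed finite sequence of blow-ups makes every such section meet the boundary transversally in the smooth locus of a divisor, and repeating your blow-up step point by point gives a procedure depending on $(y,v)$ with no termination. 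The tangency issue also undermines the lifting step itself: a smooth $\FF_v$-point of the fibre $f^{-1}(\bar y)$ over a boundary point $\bar y$ is a smooth point of $Z_y=\mathcal{X}\times_{\mathcal{Y}}\Spec\OO_v$ over $\OO_v$ only when $\tilde y$ meets the divisor transversally; with higher contact, classical Hensel does not apply at such a point, so even the codimension-one case needs transversality, which is precisely what cannot be forced uniformly.

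This is why both Denef's proof and the proof in this paper pass through genuinely different machinery. One first replaces $f$ by a log smooth (toroidal) model $(X,D)\to(Y,E)$ via weak toroidalisation (Theorem \ref{ADK}), which is legitimate because the hypothesis is quantified over \emph{all} modifications with the same generic fibre (cf.\ Lemma \ref{lem:Denef_obs} and, for the passage between the two forms of the hypothesis, the birational invariance of Lemma \ref{birationality} as in Remark \ref{remark:Denef}). The key step replacing your uniformity claim is Proposition \ref{p0}: using a \emph{logarithmic} Hensel lemma (Proposition \ref{logHensel}) over the base and then over $f$, one shows there is a bound $m$, depending only on the map of Kato fans, such that surjectivity at $v$ follows once all points $P\in V(k_v)$ with $h_Y(F(\sP))\le m$ lie in the image; this is how arbitrary contact orders are tamed without resolving them. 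A further log blow-up given by iterated barycentric subdivision (Lemma \ref{one}, Proposition \ref{p1}) converts bounded height into height $\le 1$, i.e.\ transversal intersection with the smooth locus of the boundary on one fixed model, and only then does your intended argument (splitness of the codimension-one fibres, spreading out $\Irr^1$, Lang--Weil, classical Hensel, as in Corollaries \ref{cor:sur} and \ref{cc}) complete the proof. Without some substitute for the height-bound/log-Hensel step (or Denef's multiplicative-residue argument), your outline does not close.
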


Here $k_v$ denotes the completion of $k$ at the place $v$, $(Y')^{(1)}$ denotes the set of points of codimension $1$ in $Y'$, and $\kappa(D)$ is the residue field of $D$. A \emph{modification} of $f$ is a commutative diagram
\begin{equation} \label{def:modification}
\begin{split}
\xymatrix{
X'\ar[r]^{\alpha_X}\ar[d]_{f'}& X\ar[d]^f\\
Y'\ar[r]^{\alpha_Y}& Y}
\end{split}
\end{equation}
where $f':X'\to Y'$ is a dominant morphism of proper and geometrically integral
varieties over $k$, and 
$\alpha_X: X' \to X$ and $\alpha_Y: Y' \to Y$ are birational morphisms.

One obtains the Ax--Kochen theorem by applying Theorem \ref{deneftheorem} to the universal
family of all hypersurfaces of degree $d$ in $\PP^n$ with $n \geq d^2$;
that the hypotheses of the theorem are satisfied in this
case was shown by Colliot-Th\'{e}l\`{e}ne (see \cite[Remarque~4]{CTCRAS}).

\subsection{} 
In this paper we strengthen Denef's result, by determining conditions which are both \emph{necessary and sufficient}
for the map $f:X(k_v)\to Y(k_v)$ to be surjective for almost all places $v$. Our result uses the following
weakening of Definition \ref{def:split} (in \S\ref{sec:psv} we also give a more general definition over arbitrary ground fields).

\begin{definition}
Let $k$ be a perfect field with algebraic closure $\bar{k}$. A scheme $X$ of finite type over $k$ is called
\emph{pseudo-split} if every element of $\mathrm{Gal}(\bar{k}/k)$ fixes some irreducible component of $X \times_k \bar{k}$ of multiplicity~$1$. \label{pseudosplitdefinition1}
\end{definition}

It is clear that pseudo-splitness is weaker than splitness, the latter meaning that a \emph{single} irreducible component of $X \times_k \bar k$ of multiplicity $1$ is fixed by \emph{all} of $\mathrm{Gal}(\bar k/k)$. With this terminology, we can state our generalisation of Denef's result as follows:

\begin{theorem} \label{maintheorem} 
Let $k$ be a number field. Let $f: X \to Y$ be a dominant morphism of smooth, proper, geometrically integral varieties over $k$ with geometrically integral generic fibre. Then $Y(k_v)=f(X(k_v))$ for all but finitely many places $v$ of $k$ if and only if for every modification $f': X' \to Y'$ of $f$, with $X'$ and $Y'$ smooth, and for every point $D \in (Y')^{(1)}$, the fibre $(f')^{-1}(D)$ is a pseudo-split $\kappa(D)$-variety.
\end{theorem}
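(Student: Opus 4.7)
The plan is to split the equivalence into its two implications and handle each separately, treating the ``if'' direction as a strengthening of Denef's Theorem~\ref{deneftheorem} and the ``only if'' direction as a new converse requiring a Chebotarev-type construction.

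For the \emph{sufficient direction}, I would adapt Denef's argument. First, by Hironaka one reduces to a smooth modification $f': X' \to Y'$ whose non-smooth locus in $Y'$ is a strict normal crossings divisor $D = \bigcup_{i} D_i$; for almost all $v$ this structure spreads out to a smooth $\O_v$-model. Given $y \in Y'(k_v)$, extend uniquely to $\Spec\O_v \to Y'$ with reduction $\bar y$. If $\bar y \notin D$, then $f'$ is smooth there and Lang--Weil combined with Hensel provides a lift in $X'(\O_v)$. If $\bar y$ lies on a unique component $D_i$, the new ingredient is that the pseudo-split hypothesis at $\kappa(D_i)$ ensures that every element of the absolute Galois group---in particular the Frobenius at $\bar y$, obtained by specialisation of the Galois action on the set of components---fixes some multiplicity-$1$ component of the geometric fibre. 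Such a component, being reduced and Frobenius-stable, descends to a geometrically integral $\mathbb{F}_v$-variety, carries a smooth $\mathbb{F}_v$-point by Lang--Weil, and can be lifted by Hensel. For $\bar y$ lying in deeper strata $D_I = \bigcap_{i\in I}D_i$ one proceeds by induction on $|I|$ via strict transforms, exactly as in Denef.

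For the \emph{necessary direction}, I would argue contrapositively. Fix a smooth modification $f': X' \to Y'$ and $D \in (Y')^{(1)}$ such that $(f')^{-1}(D)$ is not pseudo-split: there exists $\sigma$ in the absolute Galois group of $\kappa(D)$ fixing no multiplicity-$1$ component. Pick a finite Galois extension $L/\kappa(D)$ splitting all components, so that $\sigma \in G := \Gal(L/\kappa(D))$. Applying Chebotarev to the \'etale cover of a dense open $U \subseteq D$ associated to $L$ yields infinitely many places $v$ and smooth points $\bar y \in U(\mathbb{F}_v)$ whose Frobenius conjugacy class in $G$ matches that of $\sigma$. Smoothness of $Y'$ lifts $\bar y$ to $y' \in Y'(\O_v)$; set $y = \alpha_Y(y') \in Y(k_v)$. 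Any $x' \in X'(k_v)$ with $f'(x') = y'$ would extend to $\tilde x' \in X'(\O_v)$ whose reduction is a smooth $\mathbb{F}_v$-point of $(f')^{-1}(\bar y)$, necessarily lying on a multiplicity-$1$ Frobenius-stable component, contradicting the choice of $\sigma$. To conclude the analogous non-existence of a lift in $X(k_v)$ above $y$, I would use that $\alpha_X : X' \to X$ is birational between smooth proper $k$-varieties: after possibly refining the modification so that the comparison map $X' \to X\times_Y Y'$ is controlled near $y'$, a Nishimura-type argument ensures that any $x \in X(k_v)$ above $y$ lifts to some $x' \in X'(k_v)$ above $y'$, contrary to what we have just shown.

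The main obstacle I anticipate in each direction is the Galois-to-Frobenius comparison. In the sufficient direction this means propagating the pseudo-split property from the fibre over $\kappa(D_i)$ to the fibre over a smooth $\mathbb{F}_v$-point of $D_i$, which will rely on a spreading-out and constructibility argument for the \'etale representation on the set of irreducible components. In the necessary direction, the corresponding subtle step is the final transfer of non-surjectivity from the modification $f'$ back to the original $f$: this should require a careful choice of model so that lifts of $y \in Y(k_v)$ to $X$ and lifts of $y' \in Y'(k_v)$ to $X'$ are in a natural bijection near the points we have constructed.
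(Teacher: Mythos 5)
Your sketch of the \emph{sufficient} direction glosses over precisely the two points where the real work lies. First, for a point $y\in Y'(k_v)$ whose reduction $\bar y$ lies in a deeper stratum $D_I$ (or in the closed subset of a divisor $D_i$ over which the fibre components degenerate), a smooth $\FF_v$-point of the fibre $(f')^{-1}(\bar y)$ is in general \emph{not} a smooth point of the morphism $f'$ (the morphism need not even be flat there), so the classical Hensel lemma does not lift it to a $k_v$-point over $y$; this is exactly why Denef needs his multiplicative-residue argument and why the paper replaces classical Hensel by the logarithmic Hensel lemma (Proposition \ref{logHensel}) on a toroidal (log smooth) model coming from Theorem \ref{ADK}, not merely an SNC model from Hironaka. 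Second, ``induction on $|I|$ via strict transforms'' hides a uniformity problem: each point $y$ may require a different sequence of blow-ups of the base, each blow-up introduces new exceptional divisors over which pseudo-splitness must be invoked (this is where the hypothesis \emph{for every modification} enters — it is not enough to use the fibres of one chosen model), and the excluded finite set of places depends on the model. A point-by-point induction therefore does not produce a single finite set $S$ valid for all $v\notin S$ and all $y$. The paper resolves this by the height-bounded surjectivity criterion (Proposition \ref{p0}) followed by \emph{one} fixed further log blow-up (Proposition \ref{p1}) that puts all bounded-height points into codimension-one strata, and by Proposition \ref{5.16}, which gives finite \'etale representability of $\Irr^1$ over the \emph{entire} open stratum (again using log smoothness), so that the Frobenius specialisation argument applies to every $\FF_v$-point of the stratum and not just to a dense open subset — a distinction you cannot afford to ignore in this direction, since the adversary chooses $y$.

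In the \emph{necessary} direction the key step is also unjustified: it is false in general that an $\OO_v$-point of $X'$ lying over $y'$ reduces to a smooth $\FF_v$-point of $(f')^{-1}(\bar y)$ sitting on a multiplicity-one, Frobenius-stable component — the reduction can land on the singular locus, e.g.\ on the intersection of two conjugate components (take $N_{K/k}(\xx)=t$ with $K/k$ quadratic and $t=\pi_v^2 u$ with $u$ a norm: the fibre over $t=0$ is non-split, yet $y$ is in the image). To rule this out one must (a) construct $y'$ so that the section $\Spec\OO_v\to Y'$ meets $D$ \emph{transversally} and away from a codimension-two bad locus (the blow-up-and-Hensel construction in the proof of Theorem \ref{thm:not_surjective}), and (b) invoke the non-trivial sparsity result Proposition \ref{thm:sparsity} (= \cite[Thm.~2.8]{LS}), whose proof is exactly the missing analysis of where reductions of $\OO_v$-points can land; your one-line justification replaces this with an assertion. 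You also need the Chebotarev step to produce points $\bar y$ with non-split fibre while keeping track of the constant-field extension inside $\kappa(D)$ (the two-level frobenian analysis of \S\ref{sec:splittingdensities}, Proposition \ref{prop:s} and Corollary \ref{cor:non_sur}), and the transfer of non-surjectivity from $f'$ back to $f$ is most cleanly done via Greenberg's theorem (Lemma \ref{lem:Denef_obs}) rather than the vague Nishimura-type refinement you propose. So the overall architecture (necessity via Chebotarev plus a non-lifting criterion, sufficiency via a strengthened Denef argument) matches the paper, but as written both halves have genuine gaps at their decisive steps.
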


In the notation introduced by the first and third named authors in their recent work \cite[\S 3]{LS}, the morphisms $f: X \to Y$ satisfying the conclusion of the theorem are exactly the morphisms such that $\Delta(f') = 0$ for every modification $f'$ of $f$.

\subsection{} 
Theorem \ref{maintheorem} will be deduced from finer results. With $f: X \to Y$ as in Theorem~\ref{deneftheorem}, Colliot-Th\'el\`ene asked in \cite[\S 13.1]{CIME} how the geometry of $f$ relates to the surjectivity of the map $X(k_v) \to Y(k_v)$, for a possibly infinite collection of places $v$. He called this phenomenon ``surjectivit\'{e} arithm\'etique'' (note that this is different from the notion of arithmetic surjectivity studied in \cite{GHMS04}).  We develop general criteria which allow one to decide whether, for an \emph{individual} (but large) place $v$, the map $X(k_v) \to Y(k_v)$ is surjective. They involve certain invariants which we call ``$s$-invariants'', defined in \S \ref{sec:splittingdensities} --  local versions of the $\delta$-invariants introduced in \cite[\S 3]{LS}; their definition is given in terms of the geometry of $f$  and does not involve model theory. 

The following result is proved in \S \ref{sec:surjectivity} using tools from logarithmic geometry, in particular, a logarithmic version of Hensel's lemma and ``weak toroidalisation''.
It should be viewed as the main theorem of the paper and is a geometric criterion, in the style
of Colliot-Th\'{e}l\`{e}ne's conjecture, for surjectivity of the map $X(k_v) \to Y(k_v)$.

\begin{theorem}  \label{thm:CT}
Let $k$ be a number field. Let $f: X \to Y$ be a dominant morphism of smooth, proper, geometrically integral varieties over $k$, with geometrically integral generic fibre. Then there exist a modification $f': X' \to Y'$ of $f$ with $X'$ and $Y'$ smooth, and a finite set of places $S$ of $k$ such that for all $v \notin S$ the following are equivalent:
\begin{enumerate}
\item[(1)] The map $X(k_v) \to Y(k_v)$ is surjective;
\item[(2)] for every codimension $1$ point $D' \in (Y')^{(1)}$, we have $s_{f',D'}(v) = 1$.
\end{enumerate}
\end{theorem}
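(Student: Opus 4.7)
The plan is to combine a toroidalisation of $f$ with a logarithmic version of Hensel's lemma. First, I would apply a weak toroidalisation theorem (in the spirit of Abramovich--Karu or Illusie--Temkin) to obtain a modification $f': X' \to Y'$ of $f$ together with simple normal crossings divisors $E' \subset X'$ and $\Delta' \subset Y'$ such that $f'$ is toroidal with respect to $(E',\Delta')$, with $E' = (f')^{-1}(\Delta')_{\mathrm{red}}$. The key features are that $f'$ is smooth over $Y' \setminus \Delta'$, and that \'etale-locally around $\Delta'$ the morphism is of the form $\Spec k[P'] \to \Spec k[P]$ for an injection of toric monoids $P \hookrightarrow P'$ whose combinatorics records the multiplicities of the irreducible components of the degenerate fibres; the points $D' \in (Y')^{(1)}$ at which the fibre of $f'$ is not geometrically integral are then precisely the generic points of the components of $\Delta'$.

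Next, I would spread $f'$ out to a toroidal morphism of smooth proper schemes over $\OO_{k,S}$ for a sufficiently large finite set of places $S$. Enlarging $S$, one may assume in addition that the components of $\Delta'$ and $E'$ have good reduction and still form a strict normal crossings configuration over $\FF_v$, that the decomposition of the fibre of $f'$ over each $D' \in (Y')^{(1)}$ spreads to a corresponding decomposition over the reduction (so that $s_{f',D'}(v)$ is genuinely computed from the special fibre), and that Lang--Weil estimates apply uniformly to all relevant geometrically integral $\FF_v$-varieties. Standard reductions using the valuative criterion and properness of $\alpha_X, \alpha_Y$ then reduce the problem to the analogue for $f'$.

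Fix $v \notin S$ and a point $y \in Y'(k_v) = Y'(\OO_v)$ with reduction $\bar y \in Y'(\FF_v)$. If $\bar y \notin \Delta'$, then $f'$ is smooth at $y$; Lang--Weil on the geometrically integral fibre combined with ordinary Hensel produces a lift $x \in X'(k_v)$. If $\bar y$ lies on $\Delta'$, let $T$ be the stratum through $\bar y$, cut out by the branches $D'_1,\dots,D'_r$ of $\Delta'$ passing through $\bar y$. A \emph{logarithmic Hensel lemma}, tailored to the \'etale-local toric model $\Spec k[P'] \to \Spec k[P]$, says that $y$ lifts to $X'(k_v)$ exactly when the fibre of the reduction of $f'$ over $\bar y$ contains an $\FF_v$-rational geometrically integral component of multiplicity one compatible with the toric data attached to all the $D'_i$. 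By construction in \S\ref{sec:splittingdensities}, the invariant $s_{f',D'}(v)$ measures precisely the density of $\FF_v$-points of $D'$ whose fibre admits such a component; hence $s_{f',D'}(v) = 1$ for every $D'$ forces every $y$ to lift, while if $s_{f',D'}(v) < 1$ for some $D'$ we can Hensel-lift a bad $\bar y$ to produce a $y \in Y'(k_v) \setminus f'(X'(k_v))$.

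The main obstacle is the logarithmic Hensel step at deep strata of $\Delta'$, where one must match the combinatorial existence condition produced by the toroidal chart — essentially, the existence of an $\FF_v$-rational section of an appropriate quotient of the monoid map $P \to P'$ — with the requirement $s_{f',D'_i}(v) = 1$ for every branch $D'_i$ through $\bar y$ simultaneously. This careful translation between the monoid combinatorics of the toric model and the $\Gal(\overline{\FF}_v/\FF_v)$-action on the irreducible components of the special fibre is precisely what forces the appearance of the pseudo-split condition of Definition \ref{pseudosplitdefinition1}: all Frobenius orbits must meet \emph{some} multiplicity-one component, not a single fixed one.
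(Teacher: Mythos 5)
Your overall strategy (toroidalise, spread out, treat good-reduction points by Lang--Weil plus Hensel, treat boundary points by a logarithmic Hensel lemma, and get necessity by lifting a bad $\FF_v$-point) follows the paper's general line, but there is a genuine gap at the step you yourself flag as the main obstacle: points whose reduction lies on a \emph{deep} stratum of $\Delta'$. Your claimed criterion --- that $y$ lifts to $X'(k_v)$ if and only if the fibre over $\bar y$ contains an $\FF_v$-rational multiplicity-one component ``compatible with the toric data'' --- cannot be a statement about $\bar y$ alone. Liftability via the logarithmic Hensel lemma (Proposition \ref{logHensel}) requires filling in a commutative square of \emph{log} schemes, and whether this is possible depends on the valuations $\val_v(\pi_i(y))$ of the local equations of the branches $D'_1,\dots,D'_r$ at $y$, i.e.\ on the point of the Kato fan $F_Y(\NN)$ determined by $y$, on whether that point lies in the image of $F_X(\NN)\to F_Y(\NN)$, and on compatibility of multiplicative residues (angular components); two $k_v$-points with the same reduction $\bar y$ can behave differently. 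Moreover, even granting some precise form of such a criterion, you give no argument that it is implied by $s_{f',D'_i}(v)=1$ for the branches through $\bar y$: the $s$-invariants at the codimension-$1$ points control splitness of fibres over $\FF_v$-points of the divisors generically (via Corollary \ref{cor:sur}, which itself needs the representability statement of Proposition \ref{5.16}), but they do not see the monoid-image condition at deep strata. This is not a presentational issue: in view of Theorem \ref{maintheorem}, exceptional divisors over deep strata carry genuinely new conditions, so the equivalence (1)$\Leftrightarrow$(2) formulated only for the divisors of the toroidal model itself is exactly the statement one should \emph{not} expect to be provable this way.

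The paper resolves this by changing the model rather than analysing deep strata directly. First, Proposition \ref{p0} (Denef's surjectivity criterion in log language) shows, by applying the log Hensel lemma twice --- once to $(\sY,\sE)\to(\Spec \O_v)^{\mathrm{tr}}$ to replace $P$ by a point $Q$ with the same reduction and angular components but fan data of bounded height $m$, and once to $f$ itself to transport a lift of $Q$ back to $P$ --- that surjectivity only needs to be tested on points of height at most $m$. Then Proposition \ref{p1}, using an iterated barycentric subdivision (Lemma \ref{one}) and the resulting log blow-ups, produces a further modification $f'$ on which every such bounded-height point reduces to a \emph{smooth} point of the boundary, hence meets exactly one codimension-$1$ stratum; there, splitness plus flatness gives a smooth $\FF_v$-point of $f'$ and classical Hensel lifts it. The modification appearing in Theorem \ref{thm:CT} is this blown-up model, and condition (2) includes the $s$-invariants at its exceptional divisors --- conditions absent from your version. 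Your necessity direction is essentially right in spirit, but note it also needs the transversal-lifting and sparsity argument (Proposition \ref{thm:sparsity}, Theorem \ref{thm:not_surjective}), not just a Hensel lift of a bad $\bar y$.
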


The invariants $s_{f',D'}(v)$ appearing in the statement will be defined in \S \ref{sec:splittingdensities}. They are defined in terms of the Galois action on the irreducible components of the fibre of $f'$ over $D'$.
One benefit of our approach is that it yields a single model for $f$ which can be used to test arithmetic surjectivity using a finite list of criteria.

A simple consequence of Theorem \ref{thm:CT} is the following:

\begin{theorem} \label{thm:frob}
Let $f: X \to Y$ be a dominant morphism of smooth, proper and geometrically integral varieties over a number field $k$, with geometrically integral generic fibre. The set of places $v$ such that $Y(k_v)=f(X(k_v))$ is frobenian. \label{cor:frobenian}
\end{theorem}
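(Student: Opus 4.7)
The plan is to derive Theorem~\ref{thm:frob} as a direct consequence of Theorem~\ref{thm:CT}. I would first invoke Theorem~\ref{thm:CT} to produce a modification $f'\colon X'\to Y'$ of $f$ with $X', Y'$ smooth, together with a finite exceptional set of places $S$, such that for every $v \notin S$ the map $X(k_v) \to Y(k_v)$ is surjective if and only if $s_{f',D'}(v) = 1$ for every $D' \in (Y')^{(1)}$. Since frobenian-ness of a set of places is insensitive to alteration by a finite set, it suffices to prove that, away from $S$, this system of equalities cuts out a frobenian set.

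Next I would reduce to finitely many conditions. The condition $s_{f',D'}(v) = 1$ holds automatically whenever the fibre $(f')^{-1}(D')$ is geometrically integral over $\kappa(D')$, because then there is a unique geometric component, of multiplicity one, stabilised by the entire absolute Galois group. Since $f'$ has geometrically integral generic fibre, the set of $D' \in (Y')^{(1)}$ for which the fibre fails to be geometrically integral is finite. Thus only finitely many of the equalities $s_{f',D'}(v) = 1$ impose nontrivial constraints.

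For each such individual $D'$, I would show that $\{v : s_{f',D'}(v) = 1\}$ is frobenian. By its definition in \S\ref{sec:splittingdensities}, the invariant $s_{f',D'}(v)$ is computed from the action of the Frobenius elements above $v$ on the finite set of irreducible components of the geometric fibre $(f')^{-1}(D') \times_{\kappa(D')} \overline{\kappa(D')}$, together with their multiplicities. This action factors through $\Gal(K/k)$ for a suitable finite Galois extension $K/k$, so for $v$ unramified in $K$ the value of $s_{f',D'}(v)$ depends only on the conjugacy class of $\Frob_v$ in $\Gal(K/k)$. Hence the corresponding set of places is frobenian.

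Combining the three steps, the set of $v$ with $Y(k_v) = f(X(k_v))$ is, up to a finite discrepancy, a finite intersection of frobenian sets, and therefore frobenian itself (one passes to a common Galois extension containing each of the auxiliary $K$'s, in which all conditions become Chebotarev conditions). The real content lies in Theorem~\ref{thm:CT}; the remainder is essentially bookkeeping. The only genuinely non-routine point is verifying that $s_{f',D'}(v)$ is indeed defined purely in Galois-theoretic terms at $v$, which will be immediate from the construction in \S\ref{sec:splittingdensities}.
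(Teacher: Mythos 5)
Your proposal is correct and takes essentially the same route as the paper, which obtains Theorem \ref{thm:frob} as a formal consequence of Theorem \ref{thm:CT} together with the frobenian-ness of each set $\{v : s_{f',D'}(v)=1\}$ (Lemma \ref{lem:s=1_density}) and the stability of frobenian sets under finite intersection. Your explicit reduction to finitely many nontrivial conditions, via geometric integrality of the fibre over all but finitely many codimension-$1$ points $D'$, is a correct way of supplying the finiteness that the paper gets from the finitely many codimension-$1$ strata of $(Y',E')$ in Corollary \ref{cc}.
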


Here we use the term ``frobenian'' in the sense of Serre \cite[\S 3.3]{Ser12} (see \S \ref{sec:frob}). Frobenian sets of places have a density, but being frobenian is much stronger than just having a density; for example, an infinite frobenian set has positive density.
It is also possible to prove Theorem \ref{thm:frob} using model-theoretic results and techniques such as quantifier elimination
\cite{Ax,Pas89}; our method avoids these and is completely algebro-geometric. However, we know of no model-theoretic proof of the finer Theorems \ref{maintheorem} and \ref{thm:CT}. (From a model-theoretic perspective, one may view Theorem \ref{thm:CT} as an explicit instance of quantifier elimination).

\subsection{} Some of the ingredients of our proof are already present in the work of Denef \cite{Denefbis, Denef}, e.g.~the use of the weak toroidalisation theorem \cite{AbramovichKaru,AbramovichDenefKaru}. We need more ingredients from logarithmic geometry, cf.~\S \ref{sec:loggeometry} -- essentially a few basic properties of log smooth morphisms and log blow-ups. The choice of a log smooth model for the morphism makes some of its arithmetic properties more transparent, and can be seen as a convenient way to come up with a Galois stratification, in the sense of Fried and Sacerdote \cite{FriedSacerdote}. On the other hand, we also use work of Serre \cite{Ser12} on frobenian functions, expanding upon what was done in \cite{LS}.

Let us finally give an overview of the structure of our paper. In \S \ref{sec:pseudosplit} we introduce the class of \emph{pseudo-split varieties} and discuss their elementary properties; this section also includes some examples involving torsors under coflasque tori. In \S \ref{sec:splittingdensities}, we introduce the ``$s$-invariants''. These allow us to prove in \S \ref{sec:nonsurjectivity} that our geometric conditions are \emph{necessary} for arithmetic surjectivity. To prove that this criterion is also \emph{sufficient}, we introduce the necessary logarithmic tools in \S \ref{sec:loggeometry}. We finish the proof of our main result in \S \ref{sec:surjectivity}. 

\subsection{Notation}  A variety over a field $k$ is a separated $k$-scheme of finite type.

Let $k$ be a  number field. We denote by 
$\Omega_{k,\f}$ the set of finite places of $k$. Given a place $v$ of $k$, we write $k_v$ for the completion of $k$ at $v$. If $v$ is non-archimedean, then we denote by $\OO_v$ the ring of integers of $k_v$, by $\FF_v$ its residue field, and by $\Norm(v) = \#\FF_v$ its norm. 

For a variety $X$ over a number field $k$, a \emph{model} of $X$ is a scheme $\mathcal{X}$ of finite type over $\OO_k$ together with a choice of isomorphism $X\cong \mathcal{X} \times_{\OO_k} k$. For a morphism $f : X \to Y$ of $k$-varieties, a \emph{model} of $f$ is a morphism $f:\mathcal{X} \to \mathcal{Y}$ of finite type over $\OO_k$, again denoted by $f$, of models of $X$ and $Y$, such that the induced map on generic fibres is identified with the original morphism $X \to Y$ via the isomorphisms $\mathcal{X} \times_{\OO_k} k \cong X$ and $\mathcal{Y} \times_{\OO_k} k \cong Y$.

\subsection*{Acknowledgements} We thank Dan Abramovich, Dami\'an Gvirtz, Ofer Gabber, Martin Ulirsch and Paul Ziegler for helpful discussions on technical geometric issues, and Raf Cluckers, Jan Denef and Jamshid Derakhshan for discussions on model-theoretic methods. We thank Yongqi Liang for contributing Remark \ref{remark:Liang}. We are grateful to the referee for a very careful reading, and for helpful suggestions for improvement. The first-named author was  supported by EPSRC grant EP/R021422/1. The second-named author was partially supported by EPSRC grant EP/M020266/1; the work on this paper was started when he was visiting the Institute for Advanced Study
in Princeton, where he was supported by the Charles Simonyi Endowment, and finished when he was visiting
the Max Planck Institut f\"ur Mathematik in Bonn. The third named author acknowledges the support of FWO Vlaanderen (postdoctoral fellowship) and NWO  (Veni grant), and is grateful to the Max Planck Institut f\"ur Mathematik in Bonn for its hospitality.

\section{Pseudo-split varieties} \label{sec:pseudosplit}

In this section, we collect some observations on the characterisation and properties of pseudo-split algebras and pseudo-split varieties. 

\subsection{Pseudo-split algebras}  \label{sec:pseudosplit_basic}

Let $k$ be a field, with algebraic closure $\overline{k}$, and let $A$ be a finite \'etale $k$-algebra; then $A = \prod_{i = 1}^n k_i$ for some finite separable field extensions $k_i/k$. Write $d_i=[k_i:k]$. Let $K_i/k$ be the Galois closure of $k_i/k$ in $\bar k$. Let $K$ be the compositum of $K_1,\ldots,K_n$, i.e.~the smallest subfield of $\bar k$ containing these fields.

Write $G=\Gal(K/k)$, $G_i=\Gal(K_i/k)$ and $H_i=\Gal(K_i/k_i).$ The index of $H_i$ in $G_i$ is equal to $d_i$.
By the normal basis theorem the $G$-module $A\otimes_k K$ is identified with $\bigoplus_{i=1}^n K[G_i/H_i]$,
where $G$ acts on $G_i/H_i$ through the natural homomorphism
$G\to G_i$. We have $(\Spec A)(\overline{k})=(\Spec A)(K)=\coprod_{i=1}^n G_i/H_i$.

\begin{definition}  \label{def:splitalgebra}
The following conditions are equivalent:
\begin{enumerate}
\item[(1)] $A=k\oplus A'$ for some $k$-algebra $A'$,
\item[(2)] the natural morphism $\Spec A\to \Spec(k)$ has a section,
\item[(3)] at least one point of $(\Spec A)(\bar k)$ is fixed by $G$.
\end{enumerate}
If these conditions are satisfied, we say that $A$ is \emph{split}. For example, a separable polynomial $p(X)\in k[X]$ has a root in $k$ if and only if
$k[X]/(p(X))$ is a split $k$-algebra.
\end{definition}

\begin{definition} \label{def:psalgebra}
The following conditions are equivalent:
\begin{enumerate}
\item[(1)] $A \otimes_k F$ is split for each field $F$ such that $k \subseteq F \subseteq K$ and $K/F$ is cyclic,
\item[(2)] each element of $G$ fixes at least one point of $(\Spec A)(\overline{k})$.
\end{enumerate}
If these conditions are satisfied, we say that $A$ is \emph{pseudo-split}.
\end{definition}

Condition (2) in the above definition can be rephrased by saying that
$G$ is the union of the stabilisers of points of $\coprod_{i=1}^n G_i/H_i$.
Let $\widetilde H_i\subset G$ be the inverse image of $H_i$ under the surjective 
homomorphism $G\to G_i$. Then $G_i/H_i=G/\widetilde H_i$ with its natural
$G$-action. Thus condition (2) is equivalent to the equality
\begin{equation}
G=\bigcup  g \widetilde H_i g^{-1}, \label{cov}
\end{equation}
where the union is taken over all $g\in G$ and $i=1,\ldots, n$. 

\begin{remark}
Pseudo-split algebras naturally arise in the study of the Hasse principle for finite schemes over a number field (see e.g.~\cite[Lemma~2.2]{JL15}
and \cite[Proposition~1]{Son08}).
\end{remark}

\begin{remark} 
By Jordan's theorem \cite[Theorem~4]{Ser03}, any transitive subgroup of a permutation group
contains a permutation without fixed points. Hence if $A$ is a pseudo-split
$k$-algebra such that $\Spec A$ is connected, then $A=k$. 
\end{remark}

\begin{remark} \label{basechange} One immediately sees that if a $k$-algebra $A$ is pseudo-split, then for any field extension
$L/k$ the $L$-algebra $A\otimes_k L$ is pseudo-split. \end{remark}

\begin{remark}
	If $G$ is cyclic, then any pseudo-split $k$-algebra is in fact split.
\end{remark}

The following gives a description of pseudo-split algebras over number fields.
The proof is an exercise using the Chebotarev density theorem; we omit it as the result will not
be used in the sequel (it is also a special case of Proposition \ref{prop:PS}).

\begin{proposition} Let $k$ be a number field. Then a finite \'etale $k$-algebra is pseudo-split if and only if it is split over almost
all completions of this field. In particular, a separable polynomial $p(x)\in k[x]$ has a root in almost all completions
of $k$ if and only if $k[x]/(p(x))$ is a pseudo-split $k$-algebra. \end{proposition}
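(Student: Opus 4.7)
The plan is to use the Chebotarev density theorem to relate, for almost all finite places $v$ of $k$, splitness of $A\otimes_k k_v$ to the action of the Frobenius $\Frob_v$ on the finite $G$-set $\coprod_{i=1}^n G/\widetilde H_i$ which parameterises $(\Spec A)(\overline k)$. The key dictionary is: for $v$ unramified in $K$, the completions of $k_i$ at places above $v$ correspond to orbits of $\Frob_v$ on $G/\widetilde H_i$, and the local degree equals the orbit length, so $A\otimes_k k_v$ is split (i.e.\ has a $k_v$-factor) if and only if $\Frob_v$ has a fixed point on $\coprod_{i=1}^n G/\widetilde H_i$.

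First I would dispose of the finitely many archimedean or ramified places and fix the Frobenius-orbit reformulation just recalled. Next, for the forward direction, suppose $A$ is pseudo-split; by condition~(2) of Definition~\ref{def:psalgebra} every $g\in G$ fixes some point of $\coprod_{i=1}^n G/\widetilde H_i$. Applying this to $g=\Frob_v$ for every $v$ unramified in $K$ yields the splitness of $A\otimes_k k_v$.

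For the reverse direction, suppose $A\otimes_k k_v$ is split for all $v$ outside some finite set $S$. Note that the property ``$g$ has a fixed point on $\coprod_{i=1}^n G/\widetilde H_i$'' is invariant under conjugation in $G$: if $g$ fixes $x\widetilde H_i$, then $hgh^{-1}$ fixes $hx\widetilde H_i$. Hence it suffices to show that every conjugacy class of $G$ contains some element with a fixed point. Given an arbitrary conjugacy class $C\subset G$, the Chebotarev density theorem supplies infinitely many places $v\notin S$ (unramified in $K$) with $\Frob_v\in C$; for any such $v$, splitness of $A\otimes_k k_v$ forces $\Frob_v$ to have a fixed point. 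This gives condition~(2) of Definition~\ref{def:psalgebra}, so $A$ is pseudo-split. The special case of $A=k[x]/(p(x))$ follows since splitness of this algebra over a field is equivalent to $p$ having a root.

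The only mildly delicate point is the careful bookkeeping in the orbit/completion correspondence for unramified $v$; everything else is a direct combination of Galois theory and Chebotarev. There is no real obstacle, which is consistent with the authors' remark that the proof is an exercise and that the result is in any case a special case of the later Proposition~\ref{prop:PS}.
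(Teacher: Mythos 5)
Your argument is correct, and it is exactly the proof the paper has in mind: the authors omit it as "an exercise using the Chebotarev density theorem," and your dictionary (for $v$ unramified in $K$, splitness of $A\otimes_k k_v$ is equivalent to $\Frob_v$ having a fixed point on $\coprod_{i=1}^n G/\widetilde H_i$, since orbits of Frobenius correspond to places of the $k_i$ above $v$ with local degree equal to orbit length) combined with Chebotarev and conjugation-invariance of the fixed-point property is precisely that exercise. No gaps; the polynomial case follows as you say.
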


See Lemma \ref{lem:is_pseudo-split} for a variant of this for finitely generated fields over $\QQ$.
Returning to a general field $k$ one can classify
all pseudo-split non-split $k$-algebras as follows.

\begin{proposition} Let $K/k$ be a Galois extension. 
Let $E_1,\ldots,E_n$ be subgroups of $G=\Gal(K/k)$
such that $G$ is the union of $gE_ig^{-1}$ for all $g\in G$
and $i=1,\ldots, n$. Then 
$\bigoplus_{i=1}^n K^{E_i}$ is a pseudo-split $k$-algebra,
and all pseudo-split $k$-algebras $A$ such that $A\otimes_k K$ is isomorphic
to $K^{\dim A}$ are obtained in this way. Under this bijection, the non-split $k$-algebras are those for which $E_1,\ldots,E_n$ are proper subgroups of $G$. \end{proposition}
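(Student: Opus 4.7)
The plan is to set up a standard Galois-descent dictionary and then translate the pseudo-split condition across it. First I would recall that étale $k$-algebras $A$ with $A \otimes_k K \cong K^{\dim A}$ correspond, via $A \mapsto \Hom_k(A, K)$, to finite $G$-sets $S$, where $G = \Gal(K/k)$. Under this correspondence, if $A = \prod_{i=1}^n k_i$ with each $k_i$ a finite separable field extension of $k$ contained in $K$ up to embedding, then the orbits of $S$ under $G$ are precisely the transitive $G$-sets $G/E_i$ with $E_i = \Gal(K/k_i)$, well-defined up to conjugation in $G$. Conversely, starting from subgroups $E_1, \dots, E_n$ of $G$ one recovers $A = \bigoplus_{i=1}^n K^{E_i}$ by Galois descent, and $(\Spec A)(\bar k) = \coprod_{i=1}^n G/E_i$ as a $G$-set.

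Next I would verify the pseudo-split characterisation. By Definition \ref{def:psalgebra}(2), $A$ is pseudo-split if and only if every $\sigma \in G$ fixes some point of $(\Spec A)(\bar k)$. Through the dictionary above, $\sigma$ fixes a point in the orbit $G/E_i$ if and only if $\sigma \in g E_i g^{-1}$ for some $g \in G$. Hence $A$ is pseudo-split precisely when
\[
G = \bigcup_{g \in G,\; i=1,\dots,n} g E_i g^{-1},
\]
which is exactly condition \eqref{cov}. This gives both directions: any choice of $E_i$ satisfying the union condition produces a pseudo-split algebra $\bigoplus K^{E_i}$, and any pseudo-split algebra split by $K$, once decomposed into its field factors $k_i = K^{E_i}$, yields such a family of subgroups.

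Finally, for the last sentence of the proposition, observe that $A$ is split in the sense of Definition~\ref{def:splitalgebra} exactly when $A$ admits $k$ as a direct factor, which on the Galois-set side corresponds to the presence of a fixed point of $G$, i.e.\ a trivial orbit $G/G$. Equivalently, some $E_i = G$. Therefore, among pseudo-split $k$-algebras split by $K$, the non-split ones are precisely those for which every $E_i$ is a proper subgroup of $G$.

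I do not expect any serious obstacle here: the content is entirely the translation between subgroups of $G$ and étale $k$-algebras split by $K$. The only point requiring minor care is the choice of embeddings when identifying $k_i$ with $K^{E_i}$, which introduces an ambiguity by conjugation in $G$; but this is absorbed into the union over $g \in G$ in \eqref{cov}, so the correspondence at the level of isomorphism classes is unaffected.
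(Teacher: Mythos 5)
Your proposal is correct and follows essentially the same route as the paper: identify $(\Spec A)(\bar k)$ with the $G$-set $\coprod_{i=1}^n G/E_i$, note that the point stabilisers are exactly the conjugates $gE_ig^{-1}$, so pseudo-splitness is equivalent to the covering condition \eqref{cov}, and splitness to some $E_i=G$; the Galois-descent dictionary you spell out is just a more explicit packaging of the paper's appeal to its earlier set-up with $k_i=K^{\widetilde H_i}$.
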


\begin{proof} Write $A=\bigoplus_{i=1}^n K^{E_i}$. Then $(\Spec A)(\overline{k})$
is the set $\coprod_{i=1}^n G/E_i$. The subgroups $gE_ig^{-1}\subset G$
are precisely the $G$-stabilisers of points of this set. This shows that
$A$ is pseudo-split. Conversely, each pseudo-split $k$-algebra
$A$ can be written  in our previous notation as the direct sum of 
$k_i=K^{\widetilde H_i}$, where the subgroups $\widetilde H_i$ satisfy
(\ref{cov}). It is clear that $A$ is split if and only if $E_i=G$ for some $i$. \end{proof}

Any non-cyclic Galois extension $K/k$
gives rise to at least one pseudo-split non-split $k$-algebra: take
$E_1,\ldots,E_n$ to be all cyclic subgroups of $\Gal(K/k)$.

\begin{example} Let $K/k$ be a Galois extension such that $G = \Gal(K/k) \cong D_{n}$ is the dihedral group of degree $n$, where $n$ is odd. Then $D_{n} = \ZZ/n \rtimes \ZZ/2$, and $D_{n}$ is the union of $E_1 = \ZZ/n$ and the conjugates of $E_2 = \ZZ/2$. Hence we obtain a pseudo-split non-split algebra $A = k_1 \oplus k_2$, where $[k_1 : k] = 2$ and $[k_2 : k] = n$. For $n = 3$, we obtain a $5$-dimensional pseudo-split non-split $k$-algebra; this is the smallest possible dimension of such an algebra. If $k=\QQ$ and $K=\QQ(\sqrt{-3},\sqrt[3]{2})$, then $(X^2+3)(X^3-2)$ is solvable in all completions of $\QQ$ except $\QQ_2$ and $\QQ_3$. \end{example}

\subsection{Pseudo-split varieties} \label{sec:psv}
Let $k$ be a field (not necessarily perfect). 
For  a $k$-variety $X$, we denote by $X_{\mathrm{sm}}$   the maximal open subscheme of $X$ which is smooth over $k$. We let $X_{\mathrm{sm}, 1},\ldots, X_{\mathrm{sm}, n}$ be the irreducible components of $X_{\mathrm{sm}}$. Let $k_i$ be the algebraic closure of $k$ in the function field $k(X_{\mathrm{sm},i})$, for $i=1,\ldots,n$. 
Consider the finite $k$-algebra \begin{equation} \label{A_X} A_X=\bigoplus_{i=1}^n k_i.\end{equation} We call $\Spec A_X$ the {\em scheme of irreducible components of geometric multiplicity~$1$} of $X$. The map $X_{\mathrm{sm}} \to \Spec k$ factors as $X_{\mathrm{sm}}\to \Spec A_X \to \Spec k$, where $X_{\mathrm{sm}}\to \Spec A_X$ has geometrically integral fibres. Moreover, $\Spec A_X$ is smooth over $k$ by 
\cite[Lemma~34.11.5, Tag 05B5]{Stacksproject}, 
thus $\Spec A_X$ is finite and \'etale over $k$.

\begin{definition} \label{pseudosplitdefinition2} 
We say that $X$ is \emph{split} if the finite \'etale $k$-algebra $A_X$ is split in the sense of Definition~\ref{def:splitalgebra}. Similarly, we say that $X$ is \emph{pseudo-split} if the finite \'etale $k$-algebra $A_X$ is pseudo-split in the sense of Definition \ref{def:psalgebra}.
\end{definition}

If $X_{\mathrm{sm}} = \emptyset$, then \eqref{A_X} is the empty direct sum, hence $A_X$ is the zero ring and $\Spec A_X$ is the empty scheme. In this case it is follows easily from the definitions that $X$ is both non-split and non-pseudo-split.

The definitions in Definition \ref{pseudosplitdefinition2} are easily checked to be equivalent to Definitions \ref{def:split} and \ref{pseudosplitdefinition1} over a perfect field.
In this case, in the notation of \cite[\S 3.2]{LS}, being pseudo-split is equivalent to having $\delta(X)=1$. In the case where $k$ is a number field, we obtain the following characterisation.

\begin{proposition} \label{prop:PS}
Let $k$ be a number field. A $k$-variety is pseudo-split if and only if it has a smooth $k_v$-point for almost all completions $k_v$ of $k$.
\end{proposition}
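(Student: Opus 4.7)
The plan is to reduce the proposition to a question about $k_v$-points of the finite \'etale scheme $\Spec A_X$, and then to settle that question with the Chebotarev density theorem, using the characterisation (\ref{cov}) of pseudo-splitness.

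A smooth $k_v$-point of $X$ is the same as a $k_v$-point of $X_{\mathrm{sm}}$. The composition $X_{\mathrm{sm}} \to \Spec A_X \to \Spec k$, in which the first map is smooth with geometrically integral fibres, gives one direction for free. For the converse, I would spread everything out to a model $\mathcal{X}_{\mathrm{sm}} \to \Spec \mathcal{A}_X \to U$ over some open $U \subset \Spec \OO_k$, chosen large enough that $\mathcal{A}_X$ is finite \'etale over $U$ and $\mathcal{X}_{\mathrm{sm}} \to \Spec \mathcal{A}_X$ is smooth with geometrically integral fibres. Any $k_v$-point of $\Spec A_X$ then extends (by \'etaleness of $\mathcal{A}_X$) to an $\OO_v$-point, inducing an $\FF_v$-point of $\Spec \mathcal{A}_X$. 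The fibre of $\mathcal{X}_{\mathrm{sm}}$ over that $\FF_v$-point is a smooth geometrically integral variety over $\FF_v$, and the Lang--Weil estimates applied uniformly across this family provide an $\FF_v$-point for $\Norm(v)$ sufficiently large, which lifts by Hensel's lemma to a $k_v$-point of $X_{\mathrm{sm}}$. The corner case $X_{\mathrm{sm}} = \emptyset$ is automatic: then $A_X = 0$ is not pseudo-split and $X$ has no smooth $k_v$-points at all.

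Writing $A_X = \bigoplus_{i=1}^n k_i$, a $k_v$-point of $\Spec A_X$ amounts to a place of some $k_i$ above $v$ of residue degree one. For $v$ unramified in the compositum $K$ of the Galois closures $K_i$, the standard dictionary between places and decomposition groups (with $\widetilde{H}_i = \Gal(K/k_i)$) translates this into the condition that the conjugacy class of $\Frob_v$ in $G = \Gal(K/k)$ meets $\widetilde{H}_i$ for some $i$, i.e.\ that $\Frob_v$ lies in the conjugation-invariant subset $\bigcup_{i,g} g \widetilde{H}_i g^{-1}$. Combined with the previous step, this shows that for all but finitely many $v$, the variety $X$ has a smooth $k_v$-point iff $\Frob_v$ lies in this subset.

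The Chebotarev density theorem now concludes. By (\ref{cov}), the above union equals $G$ precisely when $A_X$ is pseudo-split; in that case every $v$ outside a finite set yields a smooth $k_v$-point. If $A_X$ is not pseudo-split, the union is a proper conjugation-invariant subset of $G$, so Chebotarev produces a set of places of positive density whose Frobenius avoids it, and these $v$ have no $k_v$-point on $\Spec A_X$ and a fortiori no smooth $k_v$-point on $X$. The one nontrivial technical ingredient is the uniform Lang--Weil--Hensel argument of the second paragraph; the rest is a purely group-theoretic translation via Chebotarev.
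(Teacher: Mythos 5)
Your argument is correct, but it is not what the paper does: the paper disposes of Proposition \ref{prop:PS} in a single line by citing \cite[Lemma~3.9]{LS}, whereas you give a self-contained proof. Your route --- reduce to $k_v$-points of $\Spec A_X$ via the factorisation $X_{\mathrm{sm}}\to\Spec A_X\to\Spec k$ with geometrically integral fibres, spread out and use Lang--Weil plus Hensel to climb back from an $\OO_v$-point of $\mathcal{A}_X$ to a smooth $k_v$-point of $X$, and then translate existence of a $k_v$-point of $\Spec A_X$ into the condition that $\Frob_v$ meets $\bigcup_{i,g} g\widetilde H_i g^{-1}$ and apply Chebotarev together with the characterisation \eqref{cov} --- is exactly the standard argument underlying the cited lemma, and it also subsumes the unproved proposition on pseudo-split algebras over number fields stated in \S\ref{sec:pseudosplit_basic}. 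What your write-up buys is independence from \cite{LS}, at the cost of redoing a known lemma; what the paper's citation buys is brevity and consistency with its systematic use of the $\delta$-invariant machinery of \cite{LS} elsewhere. Two small points you should make explicit: when spreading out, shrink $U$ so that the fibres of $\mathcal{X}_{\mathrm{sm}}\to\Spec\mathcal{A}_X$ remain nonempty and geometrically integral and so that the generic fibre of $\mathcal{X}_{\mathrm{sm}}$ is exactly $X_{\mathrm{sm}}$, and observe that the composite $\mathcal{X}_{\mathrm{sm}}\to U$ is smooth (smooth over \'etale), which is what Hensel's lemma over $\OO_v$ requires; also, your dictionary between $k_v$-points of $\Spec A_X$ and residue-degree-one places is only valid at places unramified in the $k_i$ (otherwise one needs local degree one, not just residue degree one), which is harmless since you restrict to unramified $v$ and the statement concerns almost all places.
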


\begin{proof} This follows immediately from \cite[Lemma~3.9]{LS}. \end{proof}

Another immediate consequence of the results in \cite[\S 3.2]{LS} is the following.

\begin{proposition} \label{birationality} Let $R$ be a discrete valuation ring with perfect residue field $k$. Let $\mathcal{X}_1$ and $\mathcal{X}_2$ be regular schemes which are proper over $R$, and whose generic fibres are birational. Then the special fibre of $\mathcal{X}_1$ is pseudo-split if and only if the special fibre of $\mathcal{X}_2$ is pseudo-split.
\end{proposition}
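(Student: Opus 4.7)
By the discussion preceding Proposition \ref{prop:PS}, a variety over the perfect field $k$ is pseudo-split if and only if its $\delta$-invariant in the sense of \cite[\S 3.2]{LS} equals $1$. Thus the proposition reduces to showing that the invariant $\delta(\mathcal{X}_{i,s})$ of the special fibre of a regular proper $R$-scheme depends only on the birational equivalence class of its generic fibre. My plan is to reduce this to the case of a proper birational morphism between regular models and then invoke the invariance under such morphisms established in \cite[\S 3.2]{LS}.

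More precisely, I would first form the closure $\Gamma \subset \mathcal{X}_1 \times_R \mathcal{X}_2$ of the graph of the given birational map $\mathcal{X}_{1,\eta} \dashrightarrow \mathcal{X}_{2,\eta}$. Both projections $\Gamma \to \mathcal{X}_i$ are proper and birational. After replacing $\Gamma$ by a suitable regular dominant model (at the level of regularity required by the relevant lemma in \cite[\S 3.2]{LS}), the task reduces to the following claim: for every proper birational morphism $g \colon \mathcal{X}' \to \mathcal{X}$ of regular proper $R$-schemes one has $\delta(\mathcal{X}'_s) = \delta(\mathcal{X}_s)$. The geometric content is that each multiplicity-$1$ component of $\mathcal{X}_s$ has a unique strict transform which is again a multiplicity-$1$ component of $\mathcal{X}'_s$, and this correspondence is Galois equivariant; meanwhile, the additional components of $\mathcal{X}'_s$ lying over the exceptional locus of $g$ can be arranged, by the analysis in loc.\ cit., so that they contribute no new Galois fixation pattern relevant to $\delta$.

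The main delicate point, if one wished to make the argument wholly self-contained rather than quoting \cite[\S 3.2]{LS}, is the production of a common regular model over an arbitrary DVR in higher dimension. I expect to sidestep this issue in the same way as in loc.\ cit., namely by translating the question into a statement about the divisorial valuations on $k(\mathcal{X}_{i,\eta})$ centred on the special fibre that have ramification index $1$ over $R$, together with their natural Galois action; such valuations depend only on the birational class of the generic fibre, and the invariance of $\delta$ then follows formally.
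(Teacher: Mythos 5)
The paper's own proof is a one-line citation: after the observation (which you make correctly) that over a perfect field pseudo-split is equivalent to $\delta=1$ in the sense of \cite[\S 3.2]{LS}, the proposition is exactly a special case of \cite[Lemma~3.11]{LS}, which already asserts that $\delta$ of the special fibre of a regular proper $R$-model depends only on the birational class of the generic fibre. The difficulties start with what you add on top of this. Your plan~A requires a regular scheme dominating both $\mathcal{X}_1$ and $\mathcal{X}_2$: resolving the graph closure $\Gamma\subset\mathcal{X}_1\times_R\mathcal{X}_2$ is not available over an arbitrary (possibly mixed-characteristic) discrete valuation ring in higher dimension, so the reduction to a proper birational morphism $g\colon\mathcal{X}'\to\mathcal{X}$ of regular models cannot be carried out; and even granting such a $g$, your key assertion that the exceptional components ``contribute no new Galois fixation pattern relevant to $\delta$'' is precisely the content to be proved, not a consequence of the strict-transform correspondence. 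For instance, blowing up a closed point of the special fibre with residue field $k'\supsetneq k$ lying on a multiplicity-one component produces a new multiplicity-one component whose field of constants is $k'$; that this never changes $\delta$ is true but requires an argument.

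Your plan~B does not close this gap. The collection of all divisorial valuations on $k(\mathcal{X}_{i,\eta})$ extending $\val_R$ with ramification index $1$ is indeed a birational invariant, but $\delta$ of the special fibre is computed from the subset of such valuations that are realised as codimension-one points of the \emph{chosen} regular model, together with the constant fields of their residue fields, and this subset is not model-independent (the blow-up above adds new multiplicity-one divisors). So the invariance of $\delta$ does not ``follow formally''; the genuine statement -- that whether a given element of $\Gal(\bar k/k)$ fixes some multiplicity-one geometric component of the special fibre is independent of the regular proper model -- is exactly \cite[Lemma~3.11]{LS}, which the paper simply invokes. Either cite that lemma directly (as the paper does), or supply the missing argument yourself, e.g.\ by reducing, via an unramified base change to the fixed field of the Galois element in question (harmless since $k$ is perfect, so regularity is preserved), to the birational invariance of splitness of the special fibre, and proving that by a local computation at the centre on the other model of the valuation attached to a multiplicity-one geometrically integral component; none of this is in your sketch.
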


\begin{proof} This is a special case of \cite[Lemma 3.11]{LS}. \end{proof}

\subsection{Pseudo-split algebras and coflasque tori}

We make some observations in the style of Colliot-Th\'el\`ene's paper \cite{CTtores}. Given any finite \'etale $k$-algebra $A=\bigoplus_{i=1}^n k_i$, one defines the associated multinorm $k$-torus $\Res_{A/k}^1\Gm$ by the exact sequence
\begin{equation}
1 \to \Res_{A/k}^1\Gm \to \prod_{i = 1}^n \Res_{k_i/k} \GG_{m} \to \GG_{m} \to 1,
\label{cof}
\end{equation}
where the third map is induced by the norms from $k_i$ to $k$. In \cite{CTtores}, Colliot-Th\'el\`ene studied the special case $A = k(\sqrt{a}) \oplus k(\sqrt{b}) \oplus k(\sqrt{ab})$, where $a,b \in k^*$ are such that none of $a$, $b$ and $ab$ is a square and $\mathrm{char}(k) \neq 2$; this algebra is clearly pseudo-split.

\begin{proposition} If $A$ is a pseudo-split $k$-algebra, then $\smash{\Res_{A/k}^1\Gm}$ is a coflasque $k$-torus. If $k$ is a local field, then $\HH^1(k,\smash{\Res_{A/k}^1\Gm})=0$. \label{coflasque} \end{proposition}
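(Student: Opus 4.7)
The plan is to compute the character module $M = \widehat{\Res_{A/k}^1\Gm}$ directly and to verify that $\HH^1(H, M) = 0$ for every open subgroup $H$ of $G_k$. Since $M$ is a $\Gal(K/k)$-module, a standard inflation-restriction argument (using that continuous homomorphisms from a profinite group to a torsion-free discrete abelian group are trivial) reduces this to checking $\HH^1(H, M) = 0$ for every subgroup $H \leq G = \Gal(K/k)$.

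Dualising the defining sequence (\ref{cof}) and using the identification of $\widehat{\Res_{k_i/k}\Gm}$ with the permutation module $\ZZ[G/\widetilde H_i]$, we obtain
$$0 \to \ZZ \to \bigoplus_{i=1}^n \ZZ[G/\widetilde H_i] \to M \to 0.$$
Taking the long exact sequence in $H$-cohomology, together with $\HH^1(H, \ZZ) = \Hom(H, \ZZ) = 0$ (as $H$ is finite) and $\HH^1(H, \ZZ[G/\widetilde H_i]) = \bigoplus_\sigma \HH^1(H \cap \sigma \widetilde H_i \sigma^{-1}, \ZZ) = 0$ by Shapiro's lemma (with $\sigma$ running over $(H, \widetilde H_i)$-double coset representatives), produces an injection $\HH^1(H, M) \hookrightarrow \HH^2(H, \ZZ) \cong \Hom(H, \QQ/\ZZ)$. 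A second application of Shapiro, combined with Mackey's decomposition of $\mathrm{Res}_H \, \ZZ[G/\widetilde H_i]$, identifies the map into $\HH^2(H, \bigoplus \ZZ[G/\widetilde H_i])$ with restriction to the subgroups $H \cap \sigma \widetilde H_i \sigma^{-1}$; thus $\HH^1(H, M)$ is the subgroup of $\phi \in \Hom(H, \QQ/\ZZ)$ vanishing on every such intersection.

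The pseudo-split hypothesis is precisely (\ref{cov}): $G = \bigcup_{g, i} g \widetilde H_i g^{-1}$. Intersecting with $H$ and noting that elements of a common $(H, \widetilde H_i)$-double coset yield $H$-conjugate intersections, we obtain $H = \bigcup_{h_0, \sigma, i} h_0 (H \cap \sigma \widetilde H_i \sigma^{-1}) h_0^{-1}$. Since $\QQ/\ZZ$ is abelian, every $\phi \in \Hom(H, \QQ/\ZZ)$ is conjugation-invariant, so vanishing on each $H \cap \sigma \widetilde H_i \sigma^{-1}$ forces $\phi = 0$, giving $\HH^1(H, M) = 0$. This establishes coflasqueness.

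For $k$ local, specialising to $H = G_k$ gives $\HH^1(k, M) = 0$, and Tate local duality then pairs $\HH^1(k, \Res_{A/k}^1\Gm)$ with $\HH^1(k, M)$, whence $\HH^1(k, \Res_{A/k}^1\Gm) = 0$. The main obstacle is really just bookkeeping: tracking the Shapiro/Mackey identifications carefully enough to be sure that the connecting map $\HH^2(H, \ZZ) \to \HH^2(H, \ZZ[G/\widetilde H_i])$ corresponds to restriction on $\Hom(-, \QQ/\ZZ)$. Once this identification is in hand, the abelianness of $\QQ/\ZZ$ together with the pseudo-split covering condition finishes the argument almost immediately.
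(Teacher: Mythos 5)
Your proof is correct and follows essentially the same route as the paper: dualise \eqref{cof}, kill $\HH^1$ of the permutation modules, identify $\HH^1(H,M)$ with the homomorphisms $H\to\QQ/\ZZ$ that die under restriction to the relevant stabilisers, and conclude from the covering condition \eqref{cov} because such homomorphisms are class functions, finishing the local case by Tate--Nakayama duality. The only (harmless) divergence is that the paper reduces immediately to $H=G$ via Remark \ref{basechange} (pseudo-splitness is stable under base change, and the restriction of $M$ to $H$ is the character module of the multinorm torus of the base-changed algebra), whereas you handle an arbitrary subgroup $H$ directly through the Mackey double-coset decomposition; both are valid, and your identification of the connecting map with restriction to the subgroups $H\cap\sigma\widetilde H_i\sigma^{-1}$ is indeed the standard Shapiro computation.
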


We refer to \cite[\S1]{CTSansuc} for the definition of (and some background on) coflasque tori.

\begin{proof}
Let $M$ be the module of characters of $\smash{\Res_{A/k}^1\Gm}$ and let $G$ be as in \S\ref{sec:pseudosplit_basic}.  To prove that $\smash{\Res_{A/k}^1\Gm}$ is coflasque we need to show that $\HH^1(H,M)=0$ for each
subgroup $H\subset G$. By Remark \ref{basechange}, we can assume without loss of generality that $H = G$.

Consider the exact sequence of Galois-modules 
dual to (\ref{cof})
$$0\to\ZZ\to \bigoplus_{i=1}^n \ZZ[G/\widetilde H_i]\to M \to 0,$$
where we use the notation of \eqref{cov}.
The Galois action factors through the action of $G$, and $\HH^1(G,\mathbf{Z}[G/\widetilde H_i])=0$. To prove that
$\HH^1(G,M)=0$, we must therefore show that any element of the group $\mathrm{Hom}(G,\QQ/\ZZ)=\HH^2(G,\ZZ)$
which vanishes when restricted to each $\widetilde H_i$, is zero. This follows from \eqref{cov}, proving the first statement. The second statement is a general property of coflasque tori, as over the local field $k$ the finite abelian groups $\HH^1(k,\Res_{A/k}^1\Gm)$ and $\HH^1(k,M)=\HH^1(G,M)$ are dual to each other by Tate--Nakayama duality \cite[Corollary~I.2.4, Thm.~I.2.13]{Mil06}.
\end{proof}

Let $k$ be a number field and let $A=\oplus_{i=1}^n k_i$ be a pseudo-split $k$-algebra such that the extensions $k_i/k$ satisfy $\gcd([k_1:k],\ldots,[k_n:k]) \neq 1$. The family of torsors for $\smash{\Res_{A/k}^1\Gm}$   
\begin{equation} 
\prod_{i=1}^n N_{k_i/k}(x_i)= t\not=0 \label{fib2} 
\end{equation} 
can be compactified to a smooth, proper, geometrically integral variety with a morphism $\pi:X\to\PP^1_k$ extending the projection to the coordinate $t$. 

The map $X(k_v) \to \PP^1(k_v)$ is surjective for \emph{all} places $v$ of $k$. For smooth fibres this follows from Proposition \ref{coflasque}. However since $\pi$ is proper, the image $\pi(X(k_v))$ is closed, hence $\pi(X(k_v))=\PP^1(k_v)$ as claimed. The singular fibres are pseudo-split, but non-split. That they are pseudo-split is clear; that they are non-split follows from \cite[Lemma~5.4]{LS}. In particular, this surjectivity is not implied by Denef's result (Theorem \ref{deneftheorem}), but is implied by our Theorem~\ref{maintheorem} -- at least Theorem~\ref{maintheorem} gives surjectivity for all but finitely many $v$. It was this family of examples which in fact originally motivated Definition \ref{pseudosplitdefinition1}.

\section{Splitting densities} \label{sec:splittingdensities}
Let $k$ be a number field.
In this section we will introduce the ``$s$-invariants''; these are certain explicit frobenian functions which,
for a morphism of $k$-varieties $f:X \to Y$, 
measure the ``density'' of the split fibres of $X_{k_v} \to Y_{k_v}$ as $v$ varies.

\subsection{Frobenian functions} \label{sec:frob}
We first recall some of the theory of frobenian functions, following Serre's treatment in \cite[\S 3.3]{Ser12}. Recall that a function $\varphi:\Gamma \to \CC$ on a group $\Gamma$ is called a 
\emph{class function} if it is constant on each conjugacy class.
We denote by $\Omega_{k,\f}$ the set of finite places of $k$.

\begin{definition} \label{def:frob}
	A \emph{frobenian function} is a map $s: \Omega_{k,\f} \to \CC$ satisfying the following properties. There exist a finite Galois extension $K/k$ with Galois group $\Gamma$,
	a finite set of places $S \subset \Omega_{k,\f}$
	and a class function $\varphi: \Gamma \to \CC$ such that:
	\begin{enumerate}
		\item[(1)] $K/k$ is unramified outside of $S$;
		\item[(2)] $s(v) = \varphi(\Frob_v)$ for all $v \notin S$.
	\end{enumerate}
A subset of $\Omega_{k,\f}$ is called \emph{frobenian} if its indicator function is frobenian.
\end{definition}

Given $v \in \Omega_{k,\f}$ and a place $w \in \Omega_{K,\f}$ above $v$, we denote by $\Frob_{w/v} \in \Gamma$ the associated Frobenius element. In Definition \ref{def:frob}, we adopt a common abuse of notation (see \cite[\S3.2.1]{Ser12}), and denote by $\Frob_v \in \Gamma$ the choice of such an element for some $w$.
Note that $\varphi(\Frob_v)$ is well-defined as $\varphi$ is a class function.

\begin{example} 
Let $E/k$ be a finite extension of number fields. Then the set of all prime ideals of $\OO_k$ which split completely in $E$ is frobenian: in  Definition \ref{def:frob}, one takes $K$ to be the Galois closure of $E/k$ with Galois group $\Gamma$, $S$ the set of primes which ramify in $K/k$ and $\varphi: \Gamma \to \CC$ the indicator of the identity element of $\Gamma$. 
\end{example}
For any function $s: \Omega_{k,\f} \to \CC$, we define its \emph{density} to be
$$\dens(s) = \lim_{B \to \infty} \frac{\sum_{v \in \Omega_{k,\f}, \,\Norm(v) \leq B}s(v)}{B/\log B}\,,$$
if the limit exists. The density of a subset of $\Omega_{k,\f}$ is
defined to be the density of its indicator function. If $s$ is frobenian with associated class function $\varphi: \Gamma \to \CC$,
then we define its \emph{mean} to be
$$m(s) = \frac{1}{|\Gamma|} \sum_{\gamma \in \Gamma}\varphi(\gamma)$$ (this does not depend on the choice of $\varphi$).
A simple application of the Chebotarev density theorem (see \cite[\S 3.3.3.5]{Ser12})
shows that in this case $\dens(s)$ exists and
\begin{equation} \label{eqn:Cheb}
	m(s) = \dens(s).
\end{equation}
In particular, a frobenian set has positive density if and only if it is infinite.

\subsection{$s$-invariants} \label{sec:s-invariants}
We now define our $s$-invariants and study their properties.

\subsubsection{Set-up} \label{sec:s-invariants-set-up}

Let $k$ be a number field, let $K$ be a finitely generated field extension of $k$ and let $I$ be a non-empty finite \'{e}tale $K$-scheme. We associate to this situation some group
theoretic data as follows.

The $K$-algebra $K(I)$ is finite  \'{e}tale over $K$. Let $L$ be a finite Galois extension of $K$
such that $K(I)\otimes_KL\cong L^d$ for some $d \in \NN$. The Galois group $G=\Gal(L/K)$ acts on $I(L)$ 
in a natural way. 
We let $k_L$ (resp.~$k_K$) be the algebraic closure of $k$ in $L$ (resp.~$K$).

If $k_L/k$ is not Galois then we change $L$ as follows: let $M$ be a Galois closure of $k_L/k$ and let $L_M :=L \otimes_{k_L} M$. Note that $L_M$ is a field as $k_L$ is algebraically closed in $L$. Moreover $L_M$ is clearly still Galois over $K$ and the algebraic closure of $k_L$ in $L_M$ is $M$. In conclusion, replacing $L$ by $L_M$ if necessary, we may assume that $k_L/k$ is Galois.

Let $N$ be the normal subgroup of $G$ which acts trivially on $k_L$. Define $\Gamma = \Gal(k_L/k_K)$ and $\Lambda=\Gal(k_L/k)$. Note that $G/N = \Gamma \subset \Lambda$. We summarise this set-up with the following commutative diagram of field extensions and Galois groups.
\begin{equation} \label{diag:Galois_groups}
\begin{split}
\xymatrix{
k \ar@{-}[r] \ar@{=}[d] &	K \ar@{-}[r]^G \ar@{-}[d] & L \ar@{-}[d] \\
k \ar@{-}[r] \ar@/_1.5pc/@{-}[rr]^\Lambda & k_K \ar@{-}[r]^{\Gamma=G/N} & k_L
}
\end{split}
\end{equation}

\begin{definition}\label{def:s1}
	With the above set-up, let $v \in \Omega_{k,\f}$.
	\begin{itemize}
		\item	If $v$ ramifies in $k_L$ or there is no place $w$ of $k_K$ of degree $1$ 
		over $v$, then $s_I(v) := 1$. 
		\item Otherwise, we set
		\begin{equation} \label{def:s}
		\begin{split}
		s_I(v):=\frac{\displaystyle{\sum}_{\substack{w \in \Omega_{k_K} \\ \Norm w = \Norm v \\ w \mid v}} 
			\displaystyle{\frac{1}{|N|}}
			\#\left\{g \in G:
			\begin{array}{l}
			 g \bmod N = \Frob_w \text{ and } \\
			g \text{ acts with a fixed point on } I(L)
			\end{array}
			\right\}}
			{\#\{w \in \Omega_{k_K} : w \mid v, \Norm w = \Norm v\}}.
		\end{split}
		\end{equation}
	\end{itemize}
\end{definition}
\noindent Note that as
$$\#\left\{g \in G: \begin{array}{l} g \bmod N = \Frob_w \text{ and } \\	
g \text{ acts with a fixed point on } I(L)	\end{array}	\right\} \leq |N|,$$
we see that Definition \ref{def:s1} yields a well-defined function $s_I: \Omega_{k,\f} \to  [0,1]\cap\QQ$.

\subsubsection{Basic properties}
Let us first give a purely group-theoretic formula for the invariant which we introduced in Definition \ref{def:s1}.
For a subset $Z \subset H$ of a group $H$, we denote by $C_H(Z)$ the smallest subset of $H$
which is stable under conjugacy and contains $Z$, and  by $\Cl_H(Z)$ the set of conjugacy
classes in $C_H(Z)$.

\begin{lemma} \label{lem:is_frob}
	If $v$ ramifies in $k_L$ or $C_\Lambda(\Frob_v) \cap \Gamma= \emptyset$, then $s_I(v) = 1$. 
	Otherwise
	\begin{equation} \label{eqn:s_frob}
	s_I(v)	=\frac{\displaystyle{\sum}_{C } \displaystyle{\frac{1}{|C|\cdot |N|}}
		\#\left\{g \in G:
		\begin{array}{l}
		g \bmod N \in C \text{ and } \\
		g \text{ acts with a fixed point on } I(L)
		\end{array}\right\}}
		{\# \Cl_\Gamma(C_\Lambda(\Frob_v) \cap \Gamma)},
	\end{equation}
	where the sum is over $C \in \Cl_\Gamma(C_\Lambda(\Frob_v) \cap \Gamma)$.
\end{lemma}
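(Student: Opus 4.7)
My plan is to match \eqref{def:s} with \eqref{eqn:s_frob} by unpacking both sides via the Galois theory of the unramified extension $k_L/k$. First, I would dispose of the case $s_I(v)=1$. If $v$ is ramified in $k_L$ then both formulations yield $1$ by fiat. Otherwise, I use the standard parametrisation of the places of $k_L$ above $v$ by the coset space $\Lambda/\langle\Frob_v\rangle$, under which the Frobenius at the place corresponding to $\lambda\langle\Frob_v\rangle$ is $\lambda\Frob_v\lambda^{-1}$. A place $w$ of $k_K$ above $v$ is then a $\Gamma$-orbit on this coset space, and satisfies $\Norm w = \Norm v$ iff $\lambda\Frob_v\lambda^{-1}\in\Gamma$ for some (equivalently any) representative $\lambda$. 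Hence the non-existence of such a $w$ is equivalent to $C_\Lambda(\Frob_v)\cap\Gamma=\emptyset$.

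In the main case I would exploit the conjugation-invariance of the condition ``$g$ fixes a point on $I(L)$''. Since this is invariant under $G$-conjugation, the counting function
$$\gamma\longmapsto \#\{g\in G : g\bmod N = \gamma,\ g\text{ fixes a point on }I(L)\}$$
is a class function on $\Gamma=G/N$; denote its value on a class $C$ by $a(C)$. It follows that $\#\{g\in G : g\bmod N\in C,\ g\text{ has a fixed point}\}=|C|\cdot a(C)$, so the $|C|$-factor in \eqref{eqn:s_frob} cancels and its numerator simplifies to $\sum_C a(C)/|N|$. Similarly, the summand for $w$ in \eqref{def:s} depends only on the $\Gamma$-conjugacy class $C_w$ of $\Frob_w$, and equals $a(C_w)/|N|$.

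Finally, I would match the two averages by relating the set of places $w$ of $k_K$ above $v$ with $\Norm w = \Norm v$ to $\Cl_\Gamma(C_\Lambda(\Frob_v)\cap\Gamma)$. Each such $w$ carries a well-defined Frobenius class $C_w\in\Cl_\Gamma(C_\Lambda(\Frob_v)\cap\Gamma)$; conversely, given a class $C$, its preimage $\{\lambda\langle\Frob_v\rangle:\lambda\Frob_v\lambda^{-1}\in C\}$ is $\Gamma$-stable and, via the orbit--stabiliser formula applied to the stabiliser $\Gamma\cap\langle\lambda\Frob_v\lambda^{-1}\rangle$ of a coset, decomposes into $\Gamma$-orbits whose underlying places of $k_K$ all have Frobenius class $C$. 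A bookkeeping argument — counting $|S_C|=|C|\cdot|Z_\Lambda(\Frob_v)|/|\langle\Frob_v\rangle|$ coset elements and orbits of size $|\Gamma|/|\Frob_v|$ — then shows that both \eqref{def:s} and \eqref{eqn:s_frob} collapse to a single average of $a(C)/|N|$ over $C\in\Cl_\Gamma(C_\Lambda(\Frob_v)\cap\Gamma)$, yielding the identity. I expect the main obstacle to lie precisely in this final bookkeeping: one must carefully track the multiplicities with which each class $C$ arises as $C_w$ so that the averaging over places in \eqref{def:s} is seen to reproduce the averaging over classes in \eqref{eqn:s_frob}.
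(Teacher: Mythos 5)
Your route is the one the paper takes: identify the degree~$1$ places $w\mid v$ of $k_K$ with $\Gamma$-classes inside $C_\Lambda(\Frob_v)\cap\Gamma$, observe that the summand in \eqref{def:s} depends only on $C_w=C_\Gamma(\Frob_w)$ (your class function $a$), and then compare the average over places with the average over classes. The first two steps are fine. The genuine gap is the final ``bookkeeping'' step, which you assert rather than prove, and your own counts show it cannot be carried out as stated. With $\sigma=\Frob_v$, the number of cosets $\lambda\langle\sigma\rangle$ with $\lambda\sigma\lambda^{-1}\in C$ is, as you say, $|C|\,|Z_\Lambda(\sigma)|/\mathrm{ord}(\sigma)$, and each $\Gamma$-orbit (i.e.\ each place $w$) has size $|\Gamma|/\mathrm{ord}(\sigma)$; dividing, the number of degree~$1$ places with $C_w=C$ is $|C|\,|Z_\Lambda(\sigma)|/|\Gamma|=[Z_\Lambda(\tau):Z_\Gamma(\tau)]$ for $\tau\in C$. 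This multiplicity is proportional to $|C|$ and is in general \emph{not} constant over $C\in\Cl_\Gamma(C_\Lambda(\Frob_v)\cap\Gamma)$. Hence \eqref{def:s} is the \emph{multiplicity-weighted} average of $a(C)/|N|$, while \eqref{eqn:s_frob} is the unweighted one, and the two ``collapse'' to the same number only when the fibres of $w\mapsto C_w$ have constant size. Your closing sentence asserts exactly the statement that remains to be proved.

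Moreover, no cleverer bookkeeping will close this: take $k=\QQ$, $k_L=L$ an $S_4$-extension, $K=k_K$ the cubic subfield fixed by $\Gamma=G\cong D_4$ (so $N=1$, $\Lambda\cong S_4$), $I$ the quartic \'etale $K$-algebra whose $G$-set of $L$-points is $D_4/\langle(12)(34)\rangle$, and $v$ unramified with $\Frob_v$ a double transposition. Then $C_\Lambda(\Frob_v)\cap\Gamma$ consists of the three double transpositions and splits into the two $\Gamma$-classes $\{(13)(24)\}$ (no fixed point on $I(L)$) and $\{(12)(34),(14)(23)\}$ (fixed points); $v$ has three degree~$1$ places in $k_K$, whose Frobenius classes are distributed $1{:}2$, so \eqref{def:s} evaluates to $2/3$ while the right-hand side of \eqref{eqn:s_frob} evaluates to $1/2$. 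You have therefore located the crux correctly, but the proposal papers over it; note that the paper's own proof elides the same point, by declaring the union in \eqref{eqn:Frob_v_Frob_w} disjoint (one place per class), which your orbit--stabiliser count contradicts already in a biquadratic example where a single class comes from two split places. Any complete argument has to confront the multiplicities $[Z_\Lambda(\tau):Z_\Gamma(\tau)]$ head-on, and with the formulas read as written they do not cancel.
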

\begin{proof}
	We claim that
	\begin{equation} \label{eqn:Frob_v_Frob_w}
		C_\Lambda(\Frob_v) \cap \Gamma = 
		\bigsqcup_{\substack{w \in \Omega_{k_K} \\ \Norm w = \Norm v \\ w \mid v }} C_\Gamma(\Frob_w).
	\end{equation}
Indeed, let $u$ be a finite place of $k_L$ over $v$ and let $w$ be its restriction to $k_K$. Then $$\Frob_{u/w} = \Frob_{u/v}^{[\FF_w:\FF_v]} \in \Gamma.$$
It follows that if $[\FF_w:\FF_v] = 1$, then $\Frob_{u/w} \in C_\Lambda(\Frob_v)$, so the left hand side of \eqref{eqn:Frob_v_Frob_w}
	contains the right hand side. Conversely, if $\Frob_{u/v} \in \Gamma$,
	then $\Frob_{u/v}$ leaves $k_K$ invariant, hence fixes $w$. Therefore $\Norm w = \Norm v$ and $\Frob_{u/w} = \Frob_{u/v}$, whence \eqref{eqn:Frob_v_Frob_w}.
	
	Using \eqref{eqn:Frob_v_Frob_w} we find that
	\begin{equation} \label{eqn:w_conjugacy}
		\#\{w \in \Omega_{k_K} : w \mid v, \Norm w = \Norm v\} = \#\Cl_\Gamma(C_\Lambda(\Frob_v) \cap \Gamma).
	\end{equation}
	This shows that $s_I(v) = 1$ if $C_\Lambda(\Frob_v) \cap \Gamma= \emptyset$ or if $v$ ramifies in $k_L$,
	by definition. So assume that we are not in these cases.
	By \eqref{eqn:w_conjugacy} we see that the denominators in \eqref{def:s} and \eqref{eqn:s_frob}
	agree.	As for the numerators, using \eqref{eqn:Frob_v_Frob_w} we obtain
	\begin{align*}
		&\sum_{\substack{w \in \Omega_{k_K} \\ \Norm w = \Norm v \\ w \mid v}} 
			\frac{1}{|N|}
			\#\left\{g \in G:
			\begin{array}{l}
			 g \bmod N = \Frob_w \text{ and } \\
			g \text{ acts with a fixed point on } I(L)
			\end{array}
			\right\} \\
		& = \sum_{C \in \Cl_\Gamma(C_\Lambda(\Frob_v) \cap \Gamma) }\sum_{\gamma \in C}  \frac{1}{|C| \cdot |N|}
			\#\left\{g \in G:
			\begin{array}{l}
			g \bmod N = \gamma \text{ and } \\
			g \text{ acts with a fixed point on } I(L)
			\end{array}\right\} \\
		& = \sum_{C \in \Cl_\Gamma(C_\Lambda(\Frob_v) \cap \Gamma) } \frac{1}{|C| \cdot |N|}
			\#\left\{g \in G:
			\begin{array}{l}
			g \bmod N \in C \text{ and } \\
			g \text{ acts with a fixed point on } I(L)
			\end{array}\right\}
	\end{align*}
	as required.
\end{proof}

\begin{corollary} \label{cor:is_frob}
	The function 
	$$s_I: \Omega_{k,\f} \to [0,1] \cap \QQ, \quad v \mapsto s_I(v),$$
	is frobenian.
\end{corollary}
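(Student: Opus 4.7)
The plan is to read the frobenian property directly off of Lemma \ref{lem:is_frob}: the expression given there for $s_I(v)$ depends on $v$ only through the $\Lambda$-conjugacy class of $\Frob_v$, together with knowledge of whether $v$ ramifies in $k_L$. Since (as arranged in \S\ref{sec:s-invariants-set-up}) $k_L/k$ is a finite Galois extension, and hence unramified outside a finite set of places $S$, this is exactly the shape required by Definition \ref{def:frob} with the Galois extension taken to be $k_L/k$.

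Concretely, I would introduce a class function $\varphi : \Lambda \to \QQ$ as follows: for $\lambda \in \Lambda$, set $\varphi(\lambda) = 1$ when $C_\Lambda(\lambda) \cap \Gamma = \emptyset$, and otherwise declare $\varphi(\lambda)$ to be the right-hand side of \eqref{eqn:s_frob} with $\Frob_v$ replaced by $\lambda$. The subset $C_\Lambda(\lambda) \cap \Gamma \subset \Gamma$ and its partition $\Cl_\Gamma(C_\Lambda(\lambda) \cap \Gamma)$ into $\Gamma$-conjugacy classes are determined by the $\Lambda$-conjugacy class of $\lambda$ alone; and for each such conjugacy class $C \subset \Gamma$, the inner quantity
\[
\#\bigl\{g \in G : g \bmod N \in C,\ g \text{ acts with a fixed point on } I(L)\bigr\}
\]
is a purely group-theoretic count depending only on $C$, the extension $G \twoheadrightarrow \Gamma$, and the $G$-set $I(L)$. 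Thus $\varphi$ is well-defined and constant on $\Lambda$-conjugacy classes.

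Taking $S$ to be the finite set of places of $k$ that ramify in $k_L$, Lemma \ref{lem:is_frob} then gives $s_I(v) = \varphi(\Frob_v)$ for every $v \notin S$: both sides equal $1$ precisely when $C_\Lambda(\Frob_v) \cap \Gamma = \emptyset$, and otherwise both are given by \eqref{eqn:s_frob}. As always, the element $\Frob_v \in \Lambda$ is only well defined up to conjugacy, but this is harmless since $\varphi$ is a class function. This verifies the two conditions of Definition \ref{def:frob} and shows that $s_I$ is frobenian. The argument is essentially bookkeeping on top of Lemma \ref{lem:is_frob}; the only point requiring care is that $k_L/k$ really is Galois, which was ensured during the set-up by replacing $L$ by $L \otimes_{k_L} M$ if necessary, and so there is no serious obstacle beyond recognising that the formula in \eqref{eqn:s_frob} is manifestly a class function of $\Frob_v$.
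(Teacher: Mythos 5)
Your argument is correct and is exactly the paper's proof: the paper likewise deduces the corollary from Lemma \ref{lem:is_frob}, observing that for $v$ unramified in $k_L$ the value $s_I(v)$ depends only on the conjugacy class of $\Frob_v$ in $\Lambda$, so that Definition \ref{def:frob} applies with $K = k_L$. Your explicit construction of the class function $\varphi$ just spells out the bookkeeping the paper leaves implicit.
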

\begin{proof}
	This follows from Lemma \ref{lem:is_frob}, which shows that 
	$s_I(v)$ only depends on the conjugacy class of $\Frob_v \in \Lambda$
	for all $v$ which are unramified in $k_L$.
\end{proof}

The invariant $s_I(v)$ simplifies in special cases.

\begin{example} \label{ex:s}
	Let $v$ be unramified in $k_L$.
	\begin{enumerate}
	\item
	Assume that $k = k_K$, i.e.~$K$ is geometrically irreducible. Then 
		\begin{align*}
		s_I(v)&=\frac{\#\left\{g \in G:
		\begin{array}{l}
		 g \bmod N = \Frob_v\text{ and }\\
		g \text{ acts with a fixed point on } I(L)
		\end{array}
		\right\}}{|N|}.
	\end{align*}

	\item
	Assume that $N=0$, e.g.~$K$ is a number field. If there is no place of $k_K$ of degree $1$ over $v$
	then $s_I(v) = 1$. Otherwise
	\begin{align*}
		s_I(v)&=\frac{\#\left\{w \in \Omega_{k_K} :
		\begin{array}{l}
		 w \mid v, \Norm w = \Norm v, \text{ and }\\
		\Frob_w \text{ acts with a fixed point on } I(L)
		\end{array}
		\right\}}
		{\#\{w \in \Omega_{k_K} : w \mid v, \Norm w = \Norm v\}}.
	\end{align*}
	\end{enumerate}
\end{example}

In special cases, one can relate the $s$-invariants to the $\delta$-invariants from \cite{LS}.

\begin{lemma}
	Assume that $k=k_K$. Then
	$$\dens(s_I) = \frac{\#\{ g \in G: g \emph{ acts with a fixed point on } I(L)\} }{\# G}.$$
	In particular $\dens(s_I) = \delta(I)$ in the notation of \cite[\S 3.2]{LS}.
\end{lemma}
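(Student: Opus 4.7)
The plan is to apply Chebotarev to the frobenian function $s_I$ and then recognise the resulting average as a group-theoretic count.

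First, by Corollary~\ref{cor:is_frob}, the function $s_I$ is frobenian, so by \eqref{eqn:Cheb} its density exists and equals its mean $m(s_I)$. Under the hypothesis $k=k_K$, diagram~\eqref{diag:Galois_groups} collapses to give $\Gamma = \Lambda = G/N$, so Example~\ref{ex:s}(1) supplies an explicit class function $\varphi\colon \Lambda \to \QQ$ computing $s_I(v)$ at every $v$ unramified in $k_L$, namely
\[
\varphi(\gamma) \;=\; \frac{1}{|N|}\,\#\bigl\{g \in G : g \bmod N = \gamma \text{ and } g \text{ acts with a fixed point on } I(L)\bigr\}.
\]
(One verifies that this is indeed a class function: conjugation by any lift permutes fixed-point sets, so the defining condition is conjugation-invariant.)

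Next, I would average $\varphi$ over $\Lambda$. As $\gamma$ ranges over $\Lambda = G/N$, the counts inside $\varphi(\gamma)$ partition $G$ according to $N$-cosets, so summing them simply gives the total number of elements of $G$ acting with a fixed point on $I(L)$. Using $|G| = |\Lambda|\cdot|N|$, we obtain
\[
\dens(s_I) \;=\; m(s_I) \;=\; \frac{1}{|\Lambda|}\sum_{\gamma \in \Lambda} \varphi(\gamma) \;=\; \frac{\#\{g \in G : g \text{ acts with a fixed point on } I(L)\}}{|G|},
\]
which is the stated formula.

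The final identification $\dens(s_I) = \delta(I)$ is then immediate from the definition of $\delta$ in \cite[\S 3.2]{LS}, where $\delta(I)$ is precisely the proportion of elements of the Galois group (acting on $I(L)$) that fix at least one point. There is no genuine obstacle in this argument; the only thing to be careful about is that the formula from Example~\ref{ex:s}(1) is available on the full set of unramified places $v$ (a set of density~$1$), which is exactly what Chebotarev requires to extract the mean.
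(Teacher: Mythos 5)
Your argument is correct and is essentially the paper's own proof: both use the hypothesis $k=k_K$ to reduce to $\Lambda=\Gamma=G/N$, invoke Example~\ref{ex:s}(1) together with the Chebotarev identity \eqref{eqn:Cheb} to equate $\dens(s_I)$ with the mean of the class function, and then swap the order of counting so that the sum over $\gamma\in\Gamma$ collapses the $N$-cosets and yields the proportion of $g\in G$ acting with a fixed point on $I(L)$. Nothing further is needed.
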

\begin{proof}
	Since $k=k_K$, we have $\Lambda = \Gamma = G/N$. 
	Therefore Example \ref{ex:s} and the Chebotarev density theorem \eqref{eqn:Cheb} imply that 
	the density in question equals
	\begin{align*}
		&\frac{1}{|\Gamma|} \sum_{\gamma \in \Gamma}  
		\frac{\#\{g \in G: g \bmod N = \gamma \text{ and }
		g \text{ acts with a fixed point on } I(L)\}}{|N|} \\
		=  & \frac{1}{|\Gamma|\cdot|N|} \hspace{-20pt} 
		\sum_{\substack{g \in G \\ g \text{ acts with a fixed point on } I(L)}}
		\hspace{-40pt}
		\#\{ \gamma \in \Gamma : g \bmod N = \gamma \} \\
		= & \frac{\#\{ g \in G: g \text{ acts with a fixed point on } I(L)\} }{\# G},
	\end{align*}
	as required.
\end{proof}

\begin{example}
In general, one can have $\dens(s_I) \neq \delta(I)$ when $k \neq k_K$. Indeed, let $a,b\in k^*$ be such that $a,b,ab \notin k^{*2}$.
Take $K=k(\sqrt{b})$ and $I = \Spec k(\sqrt{a}, \sqrt{b})$ over $K$. In this case we have $G = \ZZ/2\ZZ$,
$\Gamma = G = \ZZ/2\ZZ$, $\Lambda= (\ZZ/2\ZZ)^2$ and $N=0$. Using Example \ref{ex:s}, one easily checks that
$3/4 = \dens(s_I) \neq \delta(I) = 1/2.$
\end{example}

\subsubsection{Determining when $s_I(v) = 1$}

Of particular interest to us will be the set of places with $s_I(v) = 1$. Here 
we have the following criterion.

\begin{lemma} \label{lem:s<1}
	Let $v$ be a place of $k$ which is unramified in $k_L$.
	Then $s_I(v) < 1$ if and only if there exists some $g \in G$ such that
	\begin{enumerate}
		\item[(1)] $g$ does not act with a fixed point on $I(L)$,
	\end{enumerate}
	and such that $g$ satisfies one of the following equivalent conditions.
	\begin{enumerate}
		\item[(2)] There is a place $w$ of $k_K$ of degree $1$ over $v$ such that $g \bmod N = \Frob_w$.
		\item[(3)] $g \bmod N \in C_\Lambda(\Frob_v) \cap \Gamma$.
	\end{enumerate}
\end{lemma}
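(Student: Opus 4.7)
The plan is to reduce everything to the explicit formula \eqref{eqn:s_frob} for $s_I(v)$ proved in the preceding lemma. Really there are two distinct claims to verify: the equivalence of conditions (2) and (3), and the main equivalence of $s_I(v) < 1$ with the existence of $g$ satisfying (1) together with (2) (equivalently, (3)).

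First I would handle the equivalence of (2) and (3). The Frobenius $\Frob_w \in \Gamma$ attached to a place $w$ of $k_K$ is only well-defined up to $\Gamma$-conjugation (coming from the choice of a place of $k_L$ above $w$), so (2) really says that $g \bmod N \in C_\Gamma(\Frob_w)$ for some $w$ of $k_K$ of degree $1$ above $v$. The identity
\[
C_\Lambda(\Frob_v) \cap \Gamma \;=\; \bigsqcup_{\substack{w \in \Omega_{k_K} \\ w \mid v,\ \Norm w = \Norm v}} C_\Gamma(\Frob_w),
\]
already established as \eqref{eqn:Frob_v_Frob_w}, then makes the equivalence of (2) and (3) immediate.

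For the main equivalence I would argue as follows. If $C_\Lambda(\Frob_v) \cap \Gamma = \emptyset$, then $s_I(v) = 1$ by definition and no $g \in G$ can satisfy (3), so both sides of the biconditional are false. Otherwise I apply formula \eqref{eqn:s_frob}. The key observation is that for any conjugacy class $C \in \Cl_\Gamma(C_\Lambda(\Frob_v) \cap \Gamma)$ the fibre of $G \twoheadrightarrow \Gamma$ over $C$ has cardinality exactly $|C| \cdot |N|$, so the quantity
\[
\frac{1}{|C|\cdot |N|} \,\#\Bigl\{g \in G : g \bmod N \in C \text{ and } g \text{ acts with a fixed point on } I(L)\Bigr\} \;\leq\; 1,
\]
with equality if and only if every $g \in G$ with $g \bmod N \in C$ acts with a fixed point on $I(L)$. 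Consequently the numerator in \eqref{eqn:s_frob} is at most $\#\Cl_\Gamma(C_\Lambda(\Frob_v) \cap \Gamma)$, which shows $s_I(v) \leq 1$, and equality occurs if and only if \emph{every} $g \in G$ whose class $g \bmod N$ lies in $C_\Lambda(\Frob_v) \cap \Gamma$ acts with a fixed point on $I(L)$. Taking the contrapositive yields exactly the stated condition.

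There is essentially no serious obstacle here; the whole argument is bookkeeping once \eqref{eqn:s_frob} and \eqref{eqn:Frob_v_Frob_w} are in hand. The only subtlety worth flagging is that condition (2) must be read with the understanding that $\Frob_w$ is a Frobenius class rather than a canonical element, which is why (2) and (3) are equivalent rather than (2) being literally a strictly stronger statement.
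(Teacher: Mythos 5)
Your argument is correct and is essentially the paper's own proof: the paper simply states that the lemma follows immediately from Definition \ref{def:s1} and Lemma \ref{lem:is_frob}, and your write-up just fills in the bookkeeping, using \eqref{eqn:Frob_v_Frob_w} for the equivalence of (2) and (3) and the formula \eqref{eqn:s_frob} (each summand being at most $1$, with equality precisely when every lift $g$ of an element of $C$ fixes a point of $I(L)$) for the main equivalence. Your remark that $\Frob_w$ is only a conjugacy class, so (2) must be read accordingly, is exactly the abuse of notation the paper adopts in \S\ref{sec:frob}.
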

\begin{proof}
	This follows immediately from Definition \ref{def:s1} and Lemma \ref{lem:is_frob}
\end{proof}

\begin{lemma} \label{lem:s=1_density}
	The set 
	\begin{equation} \label{def:s=1}
		\{v \in \Omega_{k,\f}: s_I(v) = 1\}
	\end{equation}
	is frobenian; its density is equal to
 	$$\frac{1}{|\Lambda|}	\#\left\{\lambda \in \Lambda:\begin{array}{l}
 		\mbox{each }g \in G \mbox{ with } g \bmod N \in C_\Lambda(\lambda) \cap \Gamma \\
		\mbox{acts with a fixed	point on } I(L)
	\end{array}
 	\right\}. $$
\end{lemma}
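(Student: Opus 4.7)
The lemma is essentially a direct packaging of Corollary~\ref{cor:is_frob} together with Lemma~\ref{lem:s<1} and the Chebotarev density theorem; the plan is simply to assemble these three ingredients.

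First I would observe that the set \eqref{def:s=1} is frobenian. Corollary~\ref{cor:is_frob} already establishes that $s_I$ is a frobenian function, and its proof (via Lemma~\ref{lem:is_frob}) shows that, for $v$ unramified in $k_L$, the value $s_I(v)$ depends only on the conjugacy class of $\Frob_v$ in $\Lambda$. So there is a class function $\psi$ on $\Lambda$ with $s_I(v) = \psi(\Frob_v)$ for all $v$ outside a finite set $S$ containing the places ramifying in $k_L$. The indicator function of \eqref{def:s=1} then agrees, outside $S$, with the indicator function of the $\Lambda$-conjugation-stable subset $\psi^{-1}(1) \subset \Lambda$, composed with $v \mapsto \Frob_v$. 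This is, by Definition~\ref{def:frob}, a frobenian function.

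Next, to compute the density, I would identify $\psi^{-1}(1)$ explicitly. By Lemma~\ref{lem:s<1}, for $v$ unramified in $k_L$ one has $s_I(v) = 1$ if and only if every $g \in G$ with $g \bmod N \in C_\Lambda(\Frob_v) \cap \Gamma$ acts with a fixed point on $I(L)$ (this includes vacuously the case $C_\Lambda(\Frob_v) \cap \Gamma = \emptyset$, which is consistent with Definition~\ref{def:s1}). Since $C_\Lambda(\lambda)$ depends only on the conjugacy class of $\lambda$ in $\Lambda$, this condition defines a $\Lambda$-conjugation-stable subset
\[
T := \left\{\lambda \in \Lambda : \text{each } g \in G \text{ with } g \bmod N \in C_\Lambda(\lambda) \cap \Gamma \text{ acts with a fixed point on } I(L)\right\}
\]
of $\Lambda$, and $\psi^{-1}(1) = T$.

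Finally I would apply the Chebotarev density theorem in the form \eqref{eqn:Cheb}: the mean of the indicator function of $T$ on $\Lambda$ equals the density of the corresponding frobenian set, i.e.
\[
\dens\!\bigl(\{v \in \Omega_{k,\f}: s_I(v) = 1\}\bigr) \;=\; \frac{|T|}{|\Lambda|},
\]
which is the required formula. There is no real obstacle here; the only thing to be careful about is to check that the edge cases in Definition~\ref{def:s1} ($v$ ramifying in $k_L$, or no place of $k_K$ of degree one above $v$) are absorbed correctly into the finite exceptional set and into the vacuous case of the condition defining $T$, both of which are handled by Lemma~\ref{lem:is_frob} and Lemma~\ref{lem:s<1}.
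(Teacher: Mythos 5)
Your proposal is correct and follows exactly the paper's argument: the frobenian property is taken from Corollary~\ref{cor:is_frob}, the identification of the set $\{s_I(v)=1\}$ with the conjugation-stable subset of $\Lambda$ comes from Lemma~\ref{lem:s<1}, and the density is then computed via the Chebotarev density theorem in the form \eqref{eqn:Cheb}. The paper merely states these three steps without elaboration, so your write-up is just a more detailed version of the same proof, with the edge cases (ramified $v$, or $C_\Lambda(\Frob_v)\cap\Gamma=\emptyset$) handled as intended.
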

\begin{proof}
	That \eqref{def:s=1} is frobenian follows immediately from Corollary \ref{cor:is_frob}. The density is easily
	calculated using the Chebotarev density theorem \eqref{eqn:Cheb} and Lemma \ref{lem:s<1}.
\end{proof}

We now relate the $s$-invariants to the notion of pseudo-splitness introduced in \S \ref{sec:pseudosplit}.

\begin{lemma} \label{lem:is_pseudo-split}
	With notation as in \S\ref{sec:s-invariants-set-up}, the finite \'etale $K$-scheme $I$ is pseudo-split over $K$
	if and only if $s_{I}(v) = 1$ for all but finitely many places $v$ of $k$.
\end{lemma}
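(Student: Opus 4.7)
The plan is to deduce the lemma directly from the two preceding results (Definition \ref{def:psalgebra} and Lemma \ref{lem:s=1_density}) by unwinding definitions; no new ideas are really needed, and the main task is to match up the Galois-theoretic formulas cleanly.

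First I would reduce the notion of pseudo-splitness to a statement about the action of $G$ on $I(L)$. By Definition \ref{def:psalgebra} (applied to the finite étale $K$-algebra $K(I)$, with $L$ as the splitting Galois extension), $I$ is pseudo-split over $K$ if and only if every element of $G=\Gal(L/K)$ acts with a fixed point on $I(L)$. This is the condition that will be matched against the formula for $s_I$.

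For the easy direction, suppose $I$ is pseudo-split. Then for every $w \in \Omega_{k_K}$, the set
\[
\left\{g \in G : g \bmod N = \Frob_w \text{ and } g \text{ acts with a fixed point on } I(L)\right\}
\]
coincides with $\{g \in G : g \bmod N = \Frob_w\}$, which has cardinality exactly $|N|$. Plugging into \eqref{def:s}, every summand in the numerator equals $1$, so $s_I(v)=1$ whenever $v$ is unramified in $k_L$ and some $w\mid v$ has $\Norm w=\Norm v$. In the remaining cases $s_I(v)=1$ by convention, giving $s_I(v)=1$ for \emph{all} $v\in\Omega_{k,\f}$.

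For the converse, assume $s_I(v)=1$ for all but finitely many $v$. Then the frobenian set $\{v : s_I(v)=1\}$ of Lemma \ref{lem:s=1_density} has density $1$, so the formula in that lemma forces
\[
\#\left\{\lambda\in\Lambda : \begin{array}{l}\text{every }g\in G\text{ with }g\bmod N\in C_\Lambda(\lambda)\cap\Gamma\\ \text{acts with a fixed point on }I(L)\end{array}\right\}=|\Lambda|.
\]
That is, the condition in braces holds for \emph{every} $\lambda\in\Lambda$. Now let $g\in G$ be arbitrary and set $\lambda := g\bmod N\in \Gamma\subset\Lambda$. Since $\lambda\in C_\Lambda(\lambda)\cap\Gamma$ trivially, the universal statement just obtained applies to $g$, so $g$ acts with a fixed point on $I(L)$. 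By the reformulation in the first paragraph, $I$ is pseudo-split over $K$.

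There is no real obstacle: the only mild subtlety is to remember that the conjugacy class $C_\Lambda(\lambda)$ always contains $\lambda$ itself, which is what lets us invert Lemma \ref{lem:s=1_density} from a statement about $\Lambda$ back to one about every element of $G$. All other steps are bookkeeping with the definitions of $G$, $N$, $\Gamma$, and $\Lambda$ from \eqref{diag:Galois_groups}.
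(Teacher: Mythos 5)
Your proof is correct and follows essentially the same route as the paper: the substantive (``only if'') direction is deduced exactly as in the paper from the density formula of Lemma \ref{lem:s=1_density}, combined with the observation that $\lambda \in C_\Lambda(\lambda)\cap\Gamma$ for $\lambda = g \bmod N$. Your forward direction is a harmless minor variant: instead of invoking density $1$ plus the frobenian property, you compute directly from Definition \ref{def:s1} that pseudo-splitness gives $s_I(v)=1$ for \emph{all} $v$, which is slightly stronger and equally valid.
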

\begin{proof}
	It follows easily from Lemma \ref{lem:s=1_density} that the set
	\eqref{def:s=1} has density $1$ if and only if every element of $G$ acts with a fixed point of $I(L)$,
	i.e.~if and only if $I$ is pseudo-split. As  \eqref{def:s=1} is frobenian, this completes
	the proof.
\end{proof}

\subsection{$s$-invariants in families}
\label{subsec:families}
We now consider $s$-invariants in families and give an application to splitting densities over finite fields. 
We begin with some formalities concerning irreducible components in families.

\subsubsection{Irreducible components} \label{sec:Irr}
For a morphism of schemes $X \to Y$ we denote by $\Irr_{X/Y}$ the \emph{functor of open irreducible components} of $X$ over $Y$, defined by Romagny in \cite[D\'efinition~2.1.1]{Rom11}. We call $\Irr^1_{X/Y}:=\Irr_{X_{\mathrm{sm}}/Y}$ the \emph{subfunctor of open irreducible components of $X$ over $Y$ of geometric multiplicity $1$}, where $X_{\mathrm{sm}}$ denotes  the maximal open subscheme of $X$ which is smooth over $Y$. This parametrises those components of the fibres of $f$  which are geometrically generically reduced.

\begin{lemma} \label{lem:Irr}
	Let $X \to Y$ be a morphism of schemes of finite presentation, with $Y$ irreducible. Then there exists a dense  open subset $U \subset Y$ such that the restrictions of $\Irr_{X/Y}$ and $\Irr^1_{X/Y}$
	to $U$ are representable by finite \'{e}tale $U$-schemes.
\end{lemma}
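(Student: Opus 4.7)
The plan is to analyse the situation at the generic point $\eta \in Y$ and then spread out. The geometric generic fibre $X_{\bar\eta}$ has finitely many irreducible components, and their generic multiplicities together with the natural action of $\mathrm{Gal}(\overline{\kappa(\eta)}/\kappa(\eta))$ yield two finite \'etale $\kappa(\eta)$-schemes: a scheme $\Irr_\eta$ parametrising all the irreducible components of $X_\eta$, and the subscheme $\Irr^1_\eta = \Spec A_{X_\eta}$ (as in \eqref{A_X}) parametrising only those components of geometric multiplicity~$1$. Since finite \'etale morphisms are of finite presentation, these spread out to finite \'etale schemes $\widetilde I$ and $\widetilde I^{\,1}$ over some dense open $U_0 \subset Y$.

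Next I would show that, after shrinking $U_0$ if necessary, $\widetilde I^{\,1}$ represents $\Irr^1_{X/Y}|_{U_0}$. Applying generic flatness to $X_{\mathrm{sm}} \to Y$, we may shrink $U_0$ so that this morphism is flat; by construction of $X_{\mathrm{sm}}$, its fibres are smooth and in particular geometrically reduced. Romagny's representability theorem \cite{Rom11} then asserts that $\Irr_{X_{\mathrm{sm}}/U_0} = \Irr^1_{X/Y}|_{U_0}$ is an algebraic space, \'etale and separated over $U_0$. Using constructibility of the number of geometric irreducible components of fibres of a morphism of finite presentation, a further shrinking makes this algebraic space quasi-finite over $U_0$; agreement with $\widetilde I^{\,1}$ at the generic point then forces it to coincide with the finite \'etale scheme $\widetilde I^{\,1}$. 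For $\Irr_{X/Y}$, I would run essentially the same argument applied to $X \to Y$ itself: generic flatness of $X \to Y$ together with generic constancy of the multiplicities of the components of the fibres (via Chevalley's theorem on constructibility of fibral invariants) allow us to reduce to a situation in which Romagny's theorem again applies.

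The main obstacle will be to verify that $\widetilde I$ (resp.\ $\widetilde I^{\,1}$) actually represents $\Irr_{X/Y}|_{U_0}$ (resp.\ $\Irr^1_{X/Y}|_{U_0}$), rather than merely agreeing with the representing object at the generic point. This boils down to a specialisation claim: over a dense open of $Y$, every geometric irreducible component of any fibre $X_y$ arises uniquely by specialisation from a geometric irreducible component of $X_\eta$, with the same generic multiplicity. Establishing this uniform specialisation statement, and thereby pinning down the precise dense open $U \subset Y$ on which both $\Irr_{X/Y}$ and $\Irr^1_{X/Y}$ become representable by finite \'etale $U$-schemes, is the step that requires care; the rest is a direct invocation of \cite{Rom11} combined with standard noetherian approximation and constructibility arguments.
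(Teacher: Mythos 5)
Your overall strategy (Romagny plus spreading out from the generic fibre) is in the right spirit, but as written there is a genuine gap: the step you yourself flag as ``the main obstacle'' --- that the finite \'etale scheme $\widetilde I$ obtained by spreading out the generic fibre actually represents $\Irr_{X/Y}$ over a dense open --- is precisely the content of the lemma, and you do not prove it; you only restate it as a ``uniform specialisation claim''. Moreover that restatement is not quite the right target: the functor $\Irr_{X/Y}$ does not see multiplicities at all (the multiplicity-one bookkeeping is entirely absorbed into replacing $X$ by $X_{\mathrm{sm}}$), and a fibrewise bijection of components, even with multiplicities, would not by itself give representability against arbitrary bases $T \to U$. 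The intermediate assertions meant to carry this step are also shaky: the version of Romagny's representability theorem that requires flatness and geometrically reduced fibres can be applied to $X_{\mathrm{sm}} \to Y$ after generic flatness, but not to $X \to Y$ itself, and ``generic constancy of multiplicities via Chevalley'' does not reduce you to that case --- in positive characteristic the generic fibre of a finitely presented morphism need not be geometrically reduced, and no shrinking of $Y$ repairs this (the lemma is stated with no characteristic hypothesis). Likewise, separatedness is not something Romagny hands you for free; it is exactly one of the things that has to be arranged.

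For comparison, the paper's proof needs no specialisation analysis: it reduces to $\Irr_{X/Y}$ (since $\Irr^1_{X/Y}=\Irr_{X_{\mathrm{sm}}/Y}$), quotes Romagny's Lemmes 2.1.2 et 2.1.3, which for an arbitrary finitely presented morphism over an irreducible base give a dense open $U$ over which the functor is representable by a quasi-compact \'etale algebraic space, then uses Romagny's Proposition 2.1.4 (the generic fibre is a finite affine scheme) together with constructibility of separatedness to shrink $U$ so that the space is separated, concludes it is a scheme by Knutson's criterion, and finally spreads out finiteness from the generic fibre. If you want to salvage your route, the missing comparison should be done functorially rather than fibrewise: once you know from Romagny that $\Irr_{X/Y}|_U$ is a finitely presented \'etale algebraic space over $U$, the identification of its generic fibre with $\widetilde I_\eta$ extends by standard limit arguments to an isomorphism over a smaller dense open; no multiplicity-preserving specialisation statement about individual fibres is needed.
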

\begin{proof}
	It clearly suffices to prove the result for $I:=\Irr_{X/Y}$.
By \cite[Lemmes~2.1.2 et 2.1.3]{Rom11}, there exists a dense open $U\subset Y$ such that
	$I|_U$ is representable by a quasi-compact algebraic space \'etale over $U$.
	Moreover by \cite[Proposition~2.1.4]{Rom11}, the generic fibre of $I$ is 
	a finite affine scheme. In particular the generic fibre is separated.
	As being separated is a constructible property
	(for schemes this is \cite[Proposition~9.6.1]{EGAIV}; the case 
	of algebraic spaces follows from similar
	arguments to those given in \cite[\S A]{Rom11}), on shrinking $U$
	we may assume that $I|_U \to U$ is separated.
	Thus $I$ is a scheme by Knutson's criterion \cite[Corollary~II.6.17]{Knu71}.
	Finally, as the generic fibre of $I|_U \to U$ 
	is finite, we may again shrink $U$ further to assume that $I|_U \to U$ is finite
	\'etale, as required.
\end{proof} 

\subsubsection{Definition}
We can now define $s$-invariants in families.
Let $f: X\to Y$ be a morphism of schemes of finite type over a number field $k$.
Let $y \in Y$ be an arbitrary point. The residue field $\kappa(y)$ is a finitely
generated extension of $k$. By Lemma \ref{lem:Irr}, the functor  
$\Irr^1_{f^{-1}(y)/\kappa(y)}$ of irreducible components of multiplicity $1$ of the fibre 
$f^{-1}(y)$ is representable by a finite \'{e}tale $\kappa(y)$-scheme. 
If this scheme happens to be empty, define $s_{f,y}(v)=0$ for all $v \in \Omega_{k,\f}$. Otherwise,
we are in the set-up of \S \ref{sec:s-invariants-set-up},
with $K= \kappa(y)$ and $I=\Irr^1_{f^{-1}(y)/\kappa(y)}$. 
Hence, given $v \in \Omega_{k,\f}$, we define 
\begin{equation} \label{def:s2}
	s_{f,y}(v) := s_{\Irr^1_{f^{-1}(y)/ \kappa(y)}}(v)
\end{equation}
using the notation of Definition \ref{def:s1}.

\subsubsection{Splitting densities over finite fields}
Our next proposition is the main result of this section,
and shows how $s$-invariants arise ``in nature''.

\begin{proposition} \label{prop:s}
Let $f: X\to Y$ be a morphism of schemes of finite type over a number field $k$,
with $Y$ integral. Let $f: \mathcal{X} \to \mathcal{Y}$ be a model of $X\to Y$ over $\OO_{k}$.
Let $n = \dim Y$ and let $\eta$ be the generic point	of $Y$. 	Then 
$$\#\{y \in \mathcal{Y}(\FF_v): f^{-1}(y) \mbox{ is split}\} = s_{f,\eta}(v)\#\mathcal{Y}(\FF_v) + O((\Norm v)^{n-1/2}), \quad \text{as }\Norm v \to \infty,$$
where the implied constant depends on $f$ and the choice of the model.
\end{proposition}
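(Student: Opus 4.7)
The plan is to reduce the count to a Chebotarev-type equidistribution for a finite étale Galois cover of the spread-out model, and to invoke a version of Deligne's equidistribution theorem to produce the $(\Norm v)^{n-1/2}$ error. Concretely, by Lemma \ref{lem:Irr}, after shrinking $Y$ to a dense open $U$ and correspondingly $\mathcal{Y}$ to an open $\mathcal{U}$ over $\Spec \OO_{k,S}$ for a sufficiently large finite set of places $S$, the functor $\Irr^1_{X_U/U}$ becomes representable by a finite étale $\mathcal{U}$-scheme $\mathcal{I}$. The complement $\mathcal{Y} \setminus \mathcal{U}$ has $\FF_v$-fibre of dimension strictly less than $n$ for $v \notin S$, so by Lang--Weil it contributes only $O((\Norm v)^{n-1})$ to either side of the formula; in particular $\#\mathcal{Y}(\FF_v) = \#\mathcal{U}(\FF_v) + O((\Norm v)^{n-1})$. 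Moreover, for $y \in \mathcal{U}(\FF_v)$ the characterisation of splitness from \S \ref{sec:psv} gives that $f^{-1}(y)$ is split over $\FF_v$ if and only if $\mathcal{I}_y(\FF_v) \neq \emptyset$.

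Next I would pass to the Galois closure. Let $L/K$ be a Galois splitting field of the generic fibre $I = \mathcal{I}_\eta$, with Galois-theoretic data $G, N, \Gamma, \Lambda, k_K, k_L$ as in \S \ref{sec:s-invariants-set-up}. After enlarging $S$, one spreads out to obtain a finite étale Galois cover $\mathcal{V} \to \mathcal{U}$ with group $G$ whose $k$-constant field is $k_L$, and such that $\mathcal{I}$ is recovered as the contracted product $\mathcal{V} \times^G I(L)$. For any $v \notin S$ unramified in $k_L$ and any $y \in \mathcal{U}(\FF_v)$, the Frobenius conjugacy class $\Frob_y \in G$ is then well-defined, and $\mathcal{I}_y(\FF_v) \neq \emptyset$ if and only if $\Frob_y$ acts with a fixed point on the $G$-set $I(L)$.

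The main step is an application of Deligne's equidistribution theorem for finite étale covers of varieties over finite fields. I decompose
$$\mathcal{U}(\FF_v) = \bigsqcup_{\substack{w \in \Omega_{k_K} \\ w \mid v,\ \Norm w = \Norm v}} \mathcal{U}_w(\FF_v),$$
where for each degree-$1$ place $w$ of $k_K$ above $v$, the fibre $\mathcal{U}_w$ is a geometrically integral $\FF_v$-variety of dimension $n$. Over each such $w$, the restricted cover $\mathcal{V}_w \to \mathcal{U}_w$ is a finite étale Galois cover with arithmetic group $G$, whose geometric Galois group is $N$ (as $\mathcal{V}$ has constant field $k_L$ over $k_K$ and $v$ is unramified in $k_L$), and such that $\Frob_y \bmod N = \Frob_w \in \Gamma$ for $y \in \mathcal{U}_w(\FF_v)$. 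Applying Deligne's equidistribution to the conjugation-invariant function taking the value $1$ on those $g \in G$ which act with a fixed point on $I(L)$, and $0$ otherwise, then yields
$$\#\{y \in \mathcal{U}_w(\FF_v) : \mathcal{I}_y(\FF_v) \neq \emptyset\} = \frac{\#\{g \in G : g \bmod N = \Frob_w,\ g \text{ fixes a point of } I(L)\}}{|N|} \cdot \#\mathcal{U}_w(\FF_v) + O((\Norm v)^{n-1/2}).$$

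The conclusion then follows by summing over $w$: since $\#\mathcal{U}_w(\FF_v) = (\Norm v)^n + O((\Norm v)^{n-1/2})$ by Lang--Weil, all the $\#\mathcal{U}_w(\FF_v)$ agree up to the same error, and a direct comparison with Definition \ref{def:s1} produces $s_{f,\eta}(v) \#\mathcal{U}(\FF_v) + O((\Norm v)^{n-1/2})$, which equals $s_{f,\eta}(v) \#\mathcal{Y}(\FF_v) + O((\Norm v)^{n-1/2})$ by the first paragraph. The degenerate cases are automatic: places $v$ ramified in $k_L$ can be absorbed into $S$, and when no degree-$1$ place of $k_K$ lies above $v$ both sides are $O((\Norm v)^{n-1})$ since then $\#\mathcal{U}(\FF_v) = 0$ and $s_{f,\eta}(v) = 1$ by convention. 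The main obstacle is verifying the two structural inputs to Deligne's theorem in our setting---that the geometric Galois group of $\mathcal{V}_w \to \mathcal{U}_w$ really is $N$, and that the arithmetic Frobenius coset is pinned down by $\Frob_w$---which comes down to matching up the connected components of $\mathcal{V}_w$ with the places of $k_L$ above $w$.
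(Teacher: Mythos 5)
Your proposal is correct and follows essentially the same route as the paper: shrink so that $\Irr^1$ becomes finite \'etale, pass to a Galois closure with arithmetic group $G$ and geometric group $N$, decompose the $\FF_v$-points over the degree-one places $w$ of the constant field $k_K=k_Y$, apply an equidistribution statement with square-root error on each piece, and compare with Definition \ref{def:s1}. The only real difference is the analytic black box: where you invoke Deligne's equidistribution theorem, the paper uses Serre's Chebotarev-type result for arithmetic schemes \cite[Prop.~9.15]{Ser12} (ultimately a Lang--Weil argument), and it carries out explicitly the spreading-out/normality step and the Frobenius compatibility $\Frob_y \bmod N = \Frob_w$ that you flag as the point needing verification.
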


\begin{proof}	
	The key analytic ingredient for this result is \cite[Prop.~9.15]{Ser12},
	which is a version of the Chebotarev density theorem for arithmetic schemes.
	
	Choose a finite set $S$ of finite places of $k$, which we will enlarge 
	throughout the proof.	Let $I:= \Irr^1_{X/Y}$ and $\mathcal{I}:=\Irr^1_{\mathcal{X}/\mathcal{Y}}$.	
	First, by Lemma \ref{lem:Irr}, the restriction of $\mathcal{I}$ 
	to an open dense subset of $\mathcal{Y}$ is representable by a finite \'{e}tale cover.
	However, by the Lang--Weil estimates \cite{LW54}, strict closed subsets contribute to the error term only. We may therefore
	replace $\mathcal{Y}$ by a dense open subset,
	if necessary, to assume that 
	$\pi:\mathcal{I} \to \mathcal{Y}$ is finite \'{e}tale.
	Similarly, we may assume that $\mathcal{Y}$ is normal and that $\mathcal{Y}_{\FF_v}$
	is normal for all $v \notin S$.

	Our next step is to obtain a version of the diagram \eqref{diag:Galois_groups}
	over $\OO_{k,S}$. Let $k_Y$ denote the algebraic closure of $k$ inside $\kappa(Y)$.
	As $Y$ is normal and integral, the ring $\OO_Y(Y)$ is an integrally closed domain.
	In particular $k_Y \subset \OO_Y(Y)$, hence the structure morphism
	$Y\to\Spec k$ factors through a morphism $Y \to \Spec k_Y$. 
	As in \S\ref{sec:s-invariants-set-up} we choose a common Galois closure for the 
	connected components of $I$ over $Y$
	(for the existence of the Galois closure of a connected finite \'etale cover,
	see \cite[Prop.~5.3.9]{Sza09}).
	This yields a finite \'etale Galois morphism $\mathcal{L} \to \mathcal{Y}$
	with $\mathcal{L}$ integral such that, enlarging $S$ if necessary and choosing
	$\mathcal{L}$ appropriately as in \S\ref{sec:s-invariants-set-up}, we obtain
	a commutative diagram 
	\begin{equation} \label{diag:Galois_models}
	\begin{split}
	\xymatrix{
	\Spec \OO_{k,S}  \ar@{=}[d]  &	\mathcal{Y}_{\OO_{k,S}} \ar[l] \ar@{<-}[r]^G \ar[d] &   \mathcal{L}_{\OO_{k,S}}  \ar[d] \\
	\Spec \OO_{k,S} \ar@/_1.5pc/@{<-}[rr]^\Lambda & \Spec \OO_{k_Y,S} \ar[l] \ar@{<-}[r]^{\Gamma = G/N} & \Spec \OO_{k_L,S} 
	}
	\end{split}
	\end{equation}
	which recovers the diagram \eqref{diag:Galois_groups} on the level
	of function fields. Here $L = \kappa(\mathcal{L})$ and $k_L$ is the algebraic closure
	of $k$ in $L$. We  denote by $\mathcal{Y}_{\OO_{k,S}}$ and $\mathcal{L}_{\OO_{k,S}}$
	the base change to $\OO_{k,S}$, and abuse notation by denoting $\OO_{k_Y,S}$ and $\OO_{k_L,S}$
	the localisations of  $\OO_{k_Y}$ and $\OO_{k_L}$ respectively at those places which lie above places of $S$. We also choose $S$ sufficiently large so that
	each morphism in the bottom row is finite \'{e}tale.
	
	Let $v$ be a finite place of $k$ not in $S$.
	We now give an asymptotic formula for $\#\mathcal{Y}(\FF_v)$.
	Enlarging the set $S$ if necessary, we may assume that all fibres of $\mathcal{Y}_{\OO_{k,S}} \to \mathrm{Spec}\, \OO_{k_Y,S}$ are geometrically integral. Therefore the irreducible components of 
	$\mathcal{Y}_{\FF_v}$ are in bijection with those places $w$ of $k_Y$ which divide $v$. 
We denote the corresponding component by $\mathcal{Y}_w$; this is geometrically 
	irreducible over $\FF_v$ if and only if $\Norm w = \Norm v$. The $\mathcal{Y}_w$ are disjoint as $\mathcal{Y}_{\FF_v}$
	is normal \cite[Tag 033M]{Stacksproject} and, again by normality, any non-geometrically-irreducible
	component has no $\FF_v$-point. Thus if there is no  $w$ with $\Norm w = \Norm v$, then 
	the proposition trivially holds. 
	So assume that there is a place $w$ of $k_Y$ with $\Norm w = \Norm v$. The Lang--Weil estimates now yield
	\begin{equation}
		\#\mathcal{Y}(\FF_v)  = 
		\sum_{\mathclap{\substack{w \in \Omega_{k_Y}  \\ \Norm w = \Norm v \\ w \mid v }}} \#\mathcal{Y}_w(\FF_w) 
		 = (\Norm v)^n  \sum_{\mathclap{\substack{w \in \Omega_{k_Y}  \\ \Norm w = \Norm v \\ w \mid v }}}1  
		 + O((\Norm v)^{n-1/2}).		\label{eqn:D}
	\end{equation}
	
	Next let $w$ be a place of $k_Y$ with
	$w \mid v$. For $y \in \mathcal{Y}_w(\FF_w)$, we let $\Frob_y \in G$
	denote the choice of some Frobenius element of $y$ (well-defined up to conjugacy; 
	see \cite[\S 9.3.1]{Ser12}). Note that $f^{-1}(y)$ is split
	if and only if $\Frob_y$ acts with a fixed point on $I(L)$. We therefore let
	$F: G \to \{0,1\}$ be the indicator function of those
	elements of $G$ which act with a fixed point on $I(L)$, which is a class function on $G$.
	We obtain
	
	$$\#\{y \in \mathcal{Y}_w(\FF_w): f^{-1}(y) \mbox{ is split}\} = \sum_{\mathclap{y \in \mathcal{Y}_w(\FF_w)}} F(\Frob_y).$$
	We are now in the set-up of \cite[Prop.~9.15]{Ser12}. Hence we may apply \emph{loc.~cit}.~to obtain
	$$\#\{y \in \mathcal{Y}_w(\FF_w): f^{-1}(y) \mbox{ is split}\} =  F^N(\Frob_w) (\Norm w)^n + O((\Norm w)^{n-1/2}),$$
	where for $\gamma \in \Gamma$ we define
	$$F^N(\gamma) = \frac{1}{|N|}\sum_{g \bmod N = \gamma}F(g)$$
	(cf.~\cite[\S 5.1.4]{Ser12}). 
	As in the proof of \eqref{eqn:D} we obtain
	\begin{align*}
		&\#\{y \in \mathcal{Y}(\FF_v): f^{-1}(y) \mbox{ is split}\} \\
		&= \frac{(\Norm v)^n}{|N|}
		\sum_{\mathclap{\substack{w \in \Omega_{k_Y}  \\ \Norm w = \Norm v \\ w \mid v }}} 
		\#\left\{g \in G:
		\begin{array}{l}
			g \bmod N = \Frob_w \text{ and } \\
			g \text{ acts with a fixed point on } I(L)
		\end{array} \right\}
		+ O((\Norm v)^{n-1/2}).
	\end{align*}
	Combining this with \eqref{eqn:D} and recalling Definition \ref{def:s1} completes the proof.
\end{proof}

\begin{corollary} \label{cor:non_sur}
	Let $f: X\to Y$ be a morphism of schemes of finite type over a number field $k$,
	with $Y$ integral. Let $f: \mathcal{X} \to \mathcal{Y}$ be a model of $X\to Y$ over $\OO_{k}$.
	
	There exists a finite set of places $S$ of $k$ such that for all $v \notin S$ with $s_{f,\eta}(v) < 1$,
	there exists $y \in \mathcal{Y}(\FF_v)$ such that $f^{-1}(y)$ is non-split.
\end{corollary}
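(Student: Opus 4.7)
The plan is to extract the corollary from Proposition \ref{prop:s} by comparing the count of split fibres against the total number of $\FF_v$-points of $\mathcal{Y}$. Subtracting the asymptotic formula of Proposition \ref{prop:s} from $\#\mathcal{Y}(\FF_v)$ gives
$$\#\{y\in\mathcal{Y}(\FF_v):f^{-1}(y)\text{ is non-split}\}=(1-s_{f,\eta}(v))\#\mathcal{Y}(\FF_v)+O((\Norm v)^{n-1/2}),$$
where $n=\dim Y$. The task thus reduces to showing that the main term dominates the error term for $v$ outside a finite set, under the assumption $s_{f,\eta}(v)<1$.

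Two uniform lower bounds are needed. First, I would invoke Corollary \ref{cor:is_frob}, which says $s_{f,\eta}$ is a frobenian function. In particular, outside the finite set of places ramifying in the associated extension its value is determined by a Frobenius class, so its image is a finite subset of $[0,1]\cap\QQ$. Consequently there exists a constant $c>0$, depending only on $f$, such that $1-s_{f,\eta}(v)\geq c$ whenever $s_{f,\eta}(v)<1$ and $v$ lies outside some finite set $S_0$. Second, I would observe that Definition \ref{def:s1} forces $s_{f,\eta}(v)=1$ as soon as there is no place $w$ of $k_Y$ over $v$ with $\Norm w=\Norm v$; hence the hypothesis $s_{f,\eta}(v)<1$ automatically supplies such a $w$. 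After enlarging $S_0$ so that the structure morphism $\mathcal{Y}_{\OO_{k_Y,S_0}}\to\Spec\OO_{k_Y,S_0}$ has geometrically integral fibres of dimension $n$, the Lang--Weil estimate applied to the component of $\mathcal{Y}_{\FF_v}$ corresponding to $w$ yields $\#\mathcal{Y}(\FF_v)\geq (\Norm v)^n+O((\Norm v)^{n-1/2})$.

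Combining the two bounds, for every $v$ outside a suitably enlarged finite set $S\supset S_0$ with $s_{f,\eta}(v)<1$, the number of non-split fibres is at least $c(\Norm v)^n+O((\Norm v)^{n-1/2})$, which is strictly positive once $\Norm v$ is large enough; this produces the required $y$. The only potentially delicate point is that $\#\mathcal{Y}(\FF_v)$ could a priori be of smaller order than $(\Norm v)^n$ (for instance when $k_Y\neq k$ and $v$ has no degree-$1$ place above it in $k_Y$), but as noted this case is precisely ruled out by the hypothesis $s_{f,\eta}(v)<1$, so no genuine obstacle arises.
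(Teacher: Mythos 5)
Your argument is essentially the paper's own proof, just written out in more detail: the paper likewise uses the convention in Definition \ref{def:s1} to extract a degree-one place of $k_Y$ from $s_{f,\eta}(v)<1$, applies Lang--Weil to get $\#\mathcal{Y}(\FF_v)\gg (\Norm v)^n$, and then concludes from Proposition \ref{prop:s}. The only thing you make explicit that the paper leaves implicit in ``enlarging $S$'' is the uniform lower bound $1-s_{f,\eta}(v)\geq c>0$ coming from the frobenian (finitely-valued) nature of $s_{f,\eta}$, which is a correct and welcome clarification rather than a different route.
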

\begin{proof}
	As $s_{f,\eta}(v) < 1$, Definition \ref{def:s1} implies that
	there is a place $w$ of $k_Y$ of degree $1$ over $v$; hence $Y_{k_v}$ is split.
	The Lang--Weil estimates \cite{LW54} show that $\mathcal{Y}(\FF_v) \neq \emptyset$
	for all $v \not \in S$, for a suitably large set of places $S$. Enlarging $S$ if necessary, Proposition~\ref{prop:s} implies that there exists $y \in \mathcal{Y}(\FF_v)$
	for which $f^{-1}(y)$ is non-split, as required.
\end{proof}

\begin{corollary} \label{cor:sur}
	Let $f: X\to Y$ be a morphism of schemes of finite type over a number field $k$,
	with $Y$ normal and integral. Assume that $\Irr^1_{X/Y}$ is representable by
	a finite \'etale scheme over $Y$. Let $f: \mathcal{X} \to \mathcal{Y}$ be a model of $X\to Y$
over $\OO_{k}$.
	
	There exists a finite set of places $S$ of $k$ such that for all $v \notin S$,
	the fibre over \emph{every} point $y \in \mathcal{Y}(\FF_v)$ is split
	if and only if $s_{f,\eta}(v) = 1$.
\end{corollary}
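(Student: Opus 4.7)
The plan is to combine Corollary~\ref{cor:non_sur} with a direct Frobenius-theoretic argument for the converse. The hypothesis that $\Irr^1_{X/Y}$ is representable by a finite \'etale $Y$-scheme is crucial: it permits a fibrewise translation of splitness into a fixed-point condition for the Frobenius action on $I(L)$.

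First I would enlarge $S$ so that for all $v\notin S$ the following hold: the model $\mathcal{I}:=\Irr^1_{\mathcal{X}/\mathcal{Y}}$ is finite \'etale over $\mathcal{Y}_{\OO_{k,S}}$ (this is possible by Lemma~\ref{lem:Irr} combined with the representability hypothesis on the generic fibre); $\mathcal{Y}_{\FF_v}$ is normal with geometrically integral components in bijection with places of $k_Y$ above $v$; the diagram \eqref{diag:Galois_models} of finite \'etale covers is defined over $\OO_{k,S}$; and $v$ is unramified in $k_L$. As in the proof of Proposition~\ref{prop:s}, the irreducible components $\mathcal{Y}_w$ of $\mathcal{Y}_{\FF_v}$ carrying any $\FF_v$-point are then exactly those with $\Norm w=\Norm v$.

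The ``only if'' direction is immediate from Corollary~\ref{cor:non_sur}: if $s_{f,\eta}(v)<1$ there exists $y\in\mathcal{Y}(\FF_v)$ with $f^{-1}(y)$ non-split. For the ``if'' direction, assume $s_{f,\eta}(v)=1$ and pick any $y\in\mathcal{Y}(\FF_v)$. Such a $y$ lies on some component $\mathcal{Y}_w$ with $\Norm w=\Norm v$; in particular a degree-one place of $k_Y$ above $v$ exists, so by Lemma~\ref{lem:s<1} \emph{every} $g\in G$ satisfying $g\bmod N=\Frob_w$ acts with a fixed point on $I(L)$. The Frobenius $\Frob_y\in G$ associated to $y$ via the Galois cover $\mathcal{L}\to\mathcal{Y}$ satisfies $\Frob_y\bmod N=\Frob_w$, and hence acts with a fixed point on $I(L)$. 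Because $\mathcal{I}$ is finite \'etale over $\mathcal{Y}$, the fibre $\mathcal{I}_y$ is the scheme of irreducible components of geometric multiplicity one of $f^{-1}(y)$, and the existence of a $\Frob_y$-fixed point on $I(L)$ is equivalent to $\mathcal{I}_y$ having an $\FF_v$-point, i.e.~to $f^{-1}(y)$ being split.

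The main point requiring care is the final identification ``$f^{-1}(y)$ is split iff $\Frob_y$ acts with a fixed point on $I(L)$''. This uses both the finite \'etale representability of $\mathcal{I}$ over $\mathcal{Y}$ (so that specialisation commutes with the formation of the scheme of multiplicity-one components in these fibres) and the compatibility of the Frobenius at $y$ with the Galois action on $I(L)$ coming from the tower in \eqref{diag:Galois_models}. Both facts are standard consequences of the theory of finite \'etale covers, but the representability assumption in the statement is precisely what allows one to apply them at the level of fibres over arbitrary closed points of $\mathcal{Y}_{\FF_v}$ rather than only at the generic point, as in Proposition~\ref{prop:s}.
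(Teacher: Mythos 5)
Your proposal is correct and follows essentially the same route as the paper: spread out the representability of $\Irr^1_{X/Y}$ over $\OO_{k,S}$ using Lemma \ref{lem:Irr} together with the hypothesis, handle the case $s_{f,\eta}(v)<1$ via Corollary \ref{cor:non_sur}, and for $s_{f,\eta}(v)=1$ use Lemma \ref{lem:s<1} plus the compatibility $\Frob_y \bmod N = \Frob_w$ in the tower \eqref{diag:Galois_models} to deduce that $\Frob_y$ fixes a point of $I(L)$ and hence that $f^{-1}(y)$ is split. The only cosmetic difference is that you phrase the final step through the fibre $\mathcal{I}_y$ of the finite \'etale scheme, while the paper verifies the same Frobenius compatibility via the residue-field tower $\FF_y=\FF_w\subset\FF_u\subset\FF_l$.
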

\begin{proof}
	By Lemma \ref{lem:Irr} there is a dense open subset $\mathcal{U} \subset \mathcal{Y}$
	such that $\Irr^1_{\mathcal{X}_\mathcal{U}/\mathcal{U}}$ is representable by a finite \'etale
	scheme over $\mathcal{U}$. The restriction $\Irr^1_{X/Y}$ of the functor $\Irr^1_{\mathcal{X}/\mathcal{Y}}$
to $Y$ is representable by a finite \'etale scheme over $Y$, by assumption.
	As $\Irr^1_{\mathcal{X}/\mathcal{Y}}$ is an \'etale sheaf
	\cite[Lemme~2.1.2]{Rom11} we may glue these representations together. Thus we may
assume that the generic fibre of $\mathcal{U}\to \Spec\OO_k$ is $Y$. Spreading out
we see that there is a finite set of places $S$ of $k$ such that $\Irr^1_{\mathcal{X}_{\OO_{k,S}}/\mathcal{Y}_{\OO_{k,S}}}$
	is representable by a finite \'etale scheme.
	
	To continue, we use some of the techniques from the proof of Proposition \ref{prop:s} and
	keep the notation of that proof.
	We will use the diagram \eqref{diag:Galois_models}; this is valid on enlarging $S$,
	because $\Irr^1_{X/Y} \to Y$ is finite \'{e}tale and $Y$ is normal. 
	
	If $s_{f,\eta}(v) < 1$ then, enlarging $S$,
	the result follows from Corollary \ref{cor:non_sur}.
	So let $v \notin S$,  assume that $s_{f,\eta}(v) = 1$ and let $y \in \mathcal{Y}(\FF_v)$.
	Let $w$ be the place of $k_Y$ lying below $y$ in \eqref{diag:Galois_models};
	this has degree $1$ over $v$ as $y \in \mathcal{Y}(\FF_v)$. 
	Let $l \in \mathcal{L}$ be a closed point lying above $y$ and
	let $u$ be the place of $k_L$ lying below $l$ in \eqref{diag:Galois_models}.
	Let $\FF_w,\FF_u,\FF_l$ and $\FF_y$ be the respective residue fields.
	From \eqref{diag:Galois_models} we obtain the tower of extensions of finite fields
	\begin{equation*} 
		 \FF_y = \FF_w \subset \FF_u \subset \FF_l.
	\end{equation*}
	The functoriality of Frobenius elements in extensions of 
	finite fields implies that
	\begin{equation} \label{eqn:mod_Frob}
		\Frob_{l/w} \, \bmod N = \Frob_{u/w},
	\end{equation}
	where $\Frob_{l/w} \in \Gal(\FF_l/\FF_w) = \Gal(\FF_l/\FF_y) \subset G$ and 
	$\Frob_{u/w} \in \Gal(\FF_u/\FF_w) \subset \Gamma$ denote the associated Frobenius elements.
	However, as $s_{f,\eta}(v) = 1$,
	Lemma \ref{lem:s<1} and \eqref{eqn:mod_Frob} imply that $\Frob_{l/w} = \Frob_{l/y} \in G$ acts with a fixed point on $I(L)$.
	Thus $f^{-1}(y)$ is split.
\end{proof}




\section{Non-surjectivity} \label{sec:nonsurjectivity}

Using the material developed in \S \ref{sec:splittingdensities}, we now prove one implication of Theorem \ref{maintheorem}, namely that our geometric conditions are \emph{necessary} for arithmetic surjectivity. We require the following criterion for non-existence of an $\OO_v$-point in a fibre.

\begin{proposition} \label{thm:sparsity} 
Let $k$ be a number field. Let $f: X \to Y$ be a dominant morphism of smooth and geometrically integral $k$-varieties
and let $f:\mathcal{X} \to \mathcal{Y}$ be a model over $\OO_k$. 
Let $\mathcal{T}$ be a reduced divisor in $\mathcal{Y}$ such that the restriction of $\mathcal{X} \to \mathcal{Y}$
to $\mathcal{Y}\setminus\mathcal{T}$ is smooth. 

There exist a finite set $S \subseteq \Omega_{k,\mathrm{f}}$ and a closed subset $\mathcal{Z}\subset\mathcal{T}_{\OO_{k,S}}$ containing the singular locus of $\mathcal{T}_{\OO_{k,S}}$, of codimension $2$ in $\mathcal{Y}_{\OO_{k,S}}$, such that for all finite places $v \notin S$ the following holds:

\begin{quote} Let $\mathcal{P} \in \mathcal{Y}(\OO_v)$ be such that the image of $\mathcal{P}: \mathrm{Spec}\,\OO_v \to \mathcal{Y}$ meets $\mathcal{T}_{\OO_{k,S}}$ transversally outside of $\mathcal{Z}$ and such that the fibre above $\mathcal{P} \bmod v \in \mathcal{T}(\FF_v)$ is non-split. Then 
$(\mathcal{X} \times_{\mathcal{Y}} \mathcal{P})(\OO_v)=\emptyset$. \end{quote}
\end{proposition}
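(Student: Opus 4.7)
The plan is to suppose, for contradiction, that some $Q \in (\mathcal{X} \times_\mathcal{Y} \mathcal{P})(\OO_v)$ exists; reducing $Q$ modulo $v$ we would obtain a point $Q_0 \in f^{-1}(P_0)(\FF_v)$, and our goal would be to show that $Q_0$ lies on a unique geometric irreducible component of $f^{-1}(P_0)_{\overline{\FF_v}}$, of multiplicity $1$. Galois-invariance of $Q_0$ would then force this component to be defined over $\FF_v$, so $f^{-1}(P_0)$ would be split, contradicting the hypothesis.

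First, I would choose $\mathcal{Z}$ and enlarge $S$ so that the fibre structure of $f$ over $\mathcal{T}$ is ``generic'' outside $\mathcal{Z}$. After enlarging $S$, both $\mathcal{X}$ and $\mathcal{Y}$ are smooth over $\OO_{k,S}$. I would place into $\mathcal{Z}$ the singular locus of $\mathcal{T}_{\OO_{k,S}}$, the pairwise intersections of its irreducible components, the non-flat locus of each restriction $f^{-1}(\mathcal{T}^i) \to \mathcal{T}^i$, and the locus in each $\mathcal{T}^i$ over which the geometric irreducible components of the fibres of $f^{-1}(\mathcal{T}^i) \to \mathcal{T}^i$ fail to be in natural bijection, with matching multiplicities, with those of the geometric generic fibre over $\eta_i$. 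Standard spreading-out and constructibility arguments (applied to the functor of irreducible components, in the spirit of Lemma~\ref{lem:Irr}) guarantee that $\mathcal{Z}$ can be taken closed of codimension $\geq 2$ in $\mathcal{Y}_{\OO_{k,S}}$. With this choice, for any $P_0 \in \mathcal{T}^i(\FF_v) \setminus \mathcal{Z}(\FF_v)$ and any prospective $Q_0 \in f^{-1}(P_0)$, the geometric irreducible components of $f^{-1}(P_0)_{\overline{\FF_v}}$ near $Q_0$, with their multiplicities, match those of $f^{-1}(\mathcal{T}^i)_{\overline{\FF_v}}$ near $Q_0$.

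The core of the argument is a local computation in the regular, factorial, strictly henselian local ring $A := \mathcal{O}^{\mathrm{sh}}_{\mathcal{X} \times_{\OO_k} \OO_v,\, Q_0}$, which has residue field $\overline{\FF_v}$. Since $P_0 \notin \mathcal{Z}$, the component $\mathcal{T}^i$ is Cartier near $P_0$, cut out locally by some $t \in \mathcal{O}_{\mathcal{Y},P_0}$; I set $\tau := f^*(t) \in A$ and factor $\tau = u \prod_\gamma \pi_\gamma^{e_\gamma}$, with $u$ a unit and the $\pi_\gamma$ pairwise non-associate primes. The $\pi_\gamma$ correspond to the geometric analytic branches of $f^{-1}(\mathcal{T}^i)$ through $Q_0$, with multiplicities $e_\gamma$. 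The transversality hypothesis on $\mathcal{P}$ says exactly that $\mathcal{P}^*(t)$ is a uniformiser of $\OO_v$; together with $f \circ Q = \mathcal{P}$, this yields $\tilde{Q}^*(\tau) = \mathcal{P}^*(t)$ of valuation $1$, so comparing valuations in $\tilde{Q}^*(\tau) = \tilde{Q}^*(u) \prod_\gamma \tilde{Q}^*(\pi_\gamma)^{e_\gamma}$ would give
\begin{equation*}
1 \;=\; \sum_\gamma e_\gamma \cdot \ord_v(\tilde{Q}^*(\pi_\gamma)).
\end{equation*}
Since the $e_\gamma$ are positive integers and $\ord_v(\tilde{Q}^*(\pi_\gamma))$ are nonnegative integers, vanishing precisely when $Q_0 \notin \{\pi_\gamma = 0\}$, there is a unique $\gamma_0$ with $e_{\gamma_0} = 1$ and $\ord_v(\tilde{Q}^*(\pi_{\gamma_0})) = 1$, with all other $\pi_\gamma$ non-vanishing at $Q_0$. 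Thus $Q_0$ lies on exactly one geometric analytic branch of $f^{-1}(\mathcal{T}^i)$, and that branch has multiplicity $1$.

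Combining this with the spreading-out step, the unique geometric analytic branch at $Q_0$ is part of a unique geometric irreducible component $F$ of $f^{-1}(P_0)_{\overline{\FF_v}}$ passing through $Q_0$, of multiplicity $1$. Any $\sigma \in \Gal(\overline{\FF_v}/\FF_v)$ must send $F$ to a geometric component containing $\sigma(Q_0) = Q_0$, so by uniqueness $\sigma(F) = F$; hence $F$ descends to a geometrically integral multiplicity-$1$ component of $f^{-1}(P_0)$, contradicting the non-splitness hypothesis. The hard part will be the spreading-out step in the second paragraph: verifying that the locus where the component structure of $f^{-1}(\mathcal{T}^i) \to \mathcal{T}^i$ is not ``generic'' is indeed closed of codimension $\geq 1$ in each $\mathcal{T}^i$ (hence $\geq 2$ in $\mathcal{Y}$). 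This should follow from generic flatness, constructibility results such as Lemma~\ref{lem:Irr}, and the existence of Galois closures for \'etale covers of the generic points, but has to be set up so that it delivers exactly the bijection of geometric branches used in the last paragraph.
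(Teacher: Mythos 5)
The paper itself disposes of this proposition in one line, by citing \cite[Thm.~2.8]{LS} and remarking that the passage from rational to integral points is routine; your strategy --- reduce a hypothetical $Q\in(\mathcal{X}\times_{\mathcal{Y}}\mathcal{P})(\OO_v)$ modulo $v$, factor $\tau=f^*(t)$ in the strictly henselian regular local ring $A$ at $Q_0$, and use transversality to get $1=\sum_\gamma e_\gamma\,\ord_v(\tilde{Q}^*\pi_\gamma)$ --- is exactly the classical argument behind that citation, and this local computation is correct. (One small slip: every $\pi_\gamma$ occurring in the local factorisation already lies in the maximal ideal of $A$, so each factor contributes at least $e_\gamma$; the right conclusion is that $\tau$ has \emph{exactly one} prime factor, with $e_{\gamma_0}=1$ and $\ord_v(\tilde{Q}^*\pi_{\gamma_0})=1$, rather than ``all other $\pi_\gamma$ non-vanishing at $Q_0$''.)

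The genuine gap is the bridge you defer to the ``spreading-out step''. The branches $\pi_\gamma$ are codimension-one in $\mathcal{X}_{\OO_v}$, whereas the geometric components of $f^{-1}(P_0)$ have codimension $\dim Y+1$, so ``unique branch of $f^{-1}(\mathcal{T}^i)$ at $Q_0$, of multiplicity $1$'' says nothing by itself about uniqueness or multiplicity of the fibre components through $Q_0$. Concretely, for $f:\mathbf{A}^3\to\mathbf{A}^2$, $(s,x,y)\mapsto(s,xy-s)$ and $\mathcal{T}=\{t=0\}$, the divisor $f^{-1}(\mathcal{T})=\{xy=s\}$ is irreducible and smooth (one branch of multiplicity one at the origin), yet the fibre over $(0,0)$ has two components through the origin; so the implication genuinely depends on $P_0$ avoiding the right $\mathcal{Z}$, and your fibrewise condition (``component counts and multiplicities match the geometric generic fibre''), while it does exclude this point, does not \emph{formally} yield the deduction you use, namely that distinct geometric components of $f^{-1}(P_0)$ through $Q_0$ produce distinct primes dividing $\tau$ in $A$ and that the surviving fibre component has multiplicity $e_{\gamma_0}=1$. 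To close this you must work over the base, e.g.\ arrange via Lemma~\ref{lem:Irr} (and constructibility of geometric multiplicities) that $\Irr_{f^{-1}(\mathcal{T}^i)/\mathcal{T}^i}$ and $\Irr^1$ are finite \'etale with constant multiplicities over $\mathcal{T}^i\setminus\mathcal{Z}$, then base-change to the strict henselisation of $\mathcal{T}^i$ at $\overline{P}_0$ so that the ``sheets'' of $f^{-1}(\mathcal{T}^i)$ become honest components; distinct fibre components through $Q_0$ then lie on distinct sheets and give distinct branches. Alternatively, you can avoid the uniqueness/Galois-fixing argument altogether: once $Q_0$ lies on a multiplicity-one component $\mathcal{D}$ of $f^{-1}(\mathcal{T}^i)$, push $Q_0$ down the Stein factorisation $\mathcal{D}\to\mathcal{D}'\to\mathcal{T}^i$; after enlarging $\mathcal{Z}$ the fibres of $\mathcal{D}\to\mathcal{D}'$ over $\mathcal{T}^i\setminus\mathcal{Z}$ are geometrically integral of multiplicity one in the fibres of $f$, so the $\FF_v$-point of $\mathcal{D}'$ over $P_0$ furnished by $Q_0$ exhibits a split component of $f^{-1}(P_0)$ directly. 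As written, the key claim that the components of $f^{-1}(P_0)$ near $Q_0$ ``match'' those of $f^{-1}(\mathcal{T}^i)$ near $Q_0$ --- which you yourself flag as the hard part --- is asserted rather than proved, and it is precisely where the content of the proposition lies.
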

\begin{proof}
For rational points this is proved in \cite[Thm.~2.8]{LS}. The adaptation to integral points is straightforward.
\end{proof}

Here is the main result of this section.

\begin{theorem} \label{thm:not_surjective}
Let $k$ be a number field. Let $f: X \to Y$ be a dominant morphism of smooth and geometrically integral $k$-varieties
and let $f: \mathcal{X} \to \mathcal{Y}$ be a model over $\OO_k$. Let $D \in Y^{(1)}$. Then there exists
a finite set $S$ of finite places of $k$ such that for all finite places $v \notin S$, the following holds: if $s_{f,D}(v) < 1$ then $\mathcal{X}(\OO_v) \to \mathcal{Y}(\OO_v)$
	is not surjective.
\end{theorem}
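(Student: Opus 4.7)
The plan is to use Proposition \ref{thm:sparsity} to exhibit a point $\mathcal{P} \in \mathcal{Y}(\mathcal{O}_v)$ with empty preimage in $\mathcal{X}(\mathcal{O}_v)$. Write $\bar D$ for the closure of $D$ in $Y$ (an irreducible divisor) and $\mathcal{D}$ for its closure in $\mathcal{Y}$. First I would choose a reduced divisor $\mathcal{T} \subset \mathcal{Y}$ containing $\mathcal{D}$ as well as all codimension-$1$ components of the non-smooth locus of $f$, so that Proposition~\ref{thm:sparsity} applies and yields a finite set $S_0$ of places and a codimension-$2$ closed subset $\mathcal{Z} \subset \mathcal{T}_{\mathcal{O}_{k,S_0}}$ (containing the singular locus of $\mathcal{T}$) with the desired non-lifting property. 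Enlarging $S_0$ if necessary, I may also assume that $\mathcal{Y}$ and $\mathcal{D}$ are smooth over $\mathcal{O}_{k,S_0}$.

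The next step is to find a point $y \in \mathcal{D}(\FF_v) \setminus \mathcal{Z}(\FF_v)$ whose fibre $f^{-1}(y)$ is non-split. Observe that the restricted morphism $f^{-1}(\bar D) \to \bar D$ has generic-$s$-invariant equal to $s_{f,D}(v)$ by Definition~\ref{def:s2}, so Corollary~\ref{cor:non_sur} already produces some $y \in \mathcal{D}(\FF_v)$ with non-split fibre whenever $s_{f,D}(v)<1$. To force $y \notin \mathcal{Z}$ I would apply Proposition~\ref{prop:s}: since $s_{f,D}$ is a frobenian function taking only finitely many rational values in $[0,1]$, the hypothesis $s_{f,D}(v) < 1$ implies $1 - s_{f,D}(v) \geq \delta$ for some fixed $\delta > 0$. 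Proposition~\ref{prop:s} then produces at least $\delta \cdot \#\mathcal{D}(\FF_v) + O((\Norm v)^{\dim \mathcal{D} - 1/2})$ non-split fibres above $\mathcal{D}(\FF_v)$, while $\mathcal{Z} \cap \mathcal{D}$ has dimension at most $\dim \mathcal{D} - 1$, so by Lang--Weil one has $\#(\mathcal{Z} \cap \mathcal{D})(\FF_v) = O((\Norm v)^{\dim \mathcal{D} - 1})$. For $v$ outside a sufficiently large finite $S \supset S_0$ the desired point $y$ therefore exists.

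Finally, I would lift $y$ to a transverse $\mathcal{O}_v$-point of $\mathcal{Y}$. Because $y \notin \mathcal{Z}$, both $\mathcal{Y}$ and $\mathcal{T}$ are smooth at $y$, so one can choose a local system of parameters $t_1,\ldots,t_n$ on $\mathcal{Y}$ at $y$ in which $\mathcal{T}$ is cut out by $t_1 = 0$. Smoothness of $\mathcal{Y}$ at $y$ together with Hensel's lemma then allows one to construct $\mathcal{P} \in \mathcal{Y}(\mathcal{O}_v)$ reducing to $y$ with $\mathcal{P}^* t_1$ a uniformizer of $\mathcal{O}_v$, which is exactly the transversality condition of Proposition~\ref{thm:sparsity}. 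That proposition then yields $(\mathcal{X} \times_{\mathcal{Y}} \mathcal{P})(\mathcal{O}_v) = \emptyset$, so $\mathcal{P}$ is not in the image of $\mathcal{X}(\mathcal{O}_v) \to \mathcal{Y}(\mathcal{O}_v)$, as required.

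The main technical obstacle is the counting in the second step: one must simultaneously know that the bad locus $\mathcal{Z}$ really has the codimension supplied by Proposition~\ref{thm:sparsity} and that $s_{f,D}(v)$ is bounded away from $1$ uniformly whenever $s_{f,D}(v) < 1$, so that the main term in the Chebotarev-style estimate of Proposition~\ref{prop:s} dominates both the error term and the contribution of $\mathcal{Z}$. The frobenian nature of $s_{f,D}$, combined with its rational-valuedness, is precisely what provides this uniformity and makes the whole argument work for almost all $v$.
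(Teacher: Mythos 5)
Your proposal is correct and follows essentially the same route as the paper: Proposition \ref{thm:sparsity}, a Chebotarev/Lang--Weil count producing a point $y \in \mathcal{D}(\FF_v)\setminus\mathcal{Z}(\FF_v)$ with non-split fibre (the paper packages your explicit counting by simply applying Corollary \ref{cor:non_sur} to the restriction of $f$ to $\mathcal{D}\setminus\mathcal{Z}$, noting $s_{f,D}(v)$ depends only on the generic fibre), and a Hensel-type transversal lifting (done in the paper via a blow-up of $\mathcal{Y}$ at $y$ rather than via local coordinates). The one caveat is your claim that $\mathcal{D}$ may be assumed smooth over $\OO_{k,S_0}$ after enlarging $S_0$ --- this is false in general, since the closure of $D$ in $Y$ can be singular --- but it is harmless because you never actually use it: smoothness of $\mathcal{T}$ (hence of $\mathcal{D}$ locally) at $y$ follows from $y \notin \mathcal{Z}$, exactly as you observe in your final step.
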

\begin{proof}
	Let $\mathcal{D}$ be the closure of $D$ inside $\mathcal{Y}$. Enlarge $S$ so that Proposition $\ref{thm:sparsity}$ may be applied, and let $\mathcal{T}$ and $\mathcal{Z}$ be as in the statement of that proposition. Enlarging $S$ further if necessary, we may assume that $\mathcal{Y}_{\OO_{k,S}}$ is smooth over $\OO_{k,S}$.

	Note that $s_{f,D}(v)$ only depends on $f^{-1}(D)$, i.e.~on the generic fibre of $\mathcal{D}$.
	In particular, enlarging $S$ further if necessary, we may apply Corollary~\ref{cor:non_sur} to the restriction of $f$ to 	$\mathcal{D}\setminus \mathcal{Z}$. Thus if $v \notin S$ and $s_{f,D}(v) < 1$, then there exists a point 
	$y \in \mathcal{D}(\FF_v) \setminus \mathcal{Z}(\FF_v)$ such that $f^{-1}(y)$ is non-split.
	
Therefore, by Proposition \ref{thm:sparsity}, to prove the result it suffices to show that
	we may lift $y$ to a point $\mathcal{P} \in \mathcal{Y}(\OO_v)$ 
	which meets $\mathcal{D}$ transversally at $y$. This is well-known; let us give a short geometric proof of this statement. Let $\psi: \mathcal{Y}' \to \mathcal{Y}$ be the blow-up of $\mathcal{Y}$ at $y$. The exceptional divisor $E=\mathbf{P}(T_{\mathcal{Y},y})$ is the projectivisation of the tangent space to $\mathcal{Y}$ at $y$ (since $\mathcal{Y}$ is smooth at $y$). The linear subvariety $\mathbf{P}(T_{\mathcal{D},y})$ of $E$ is strictly smaller (since $\mathcal{D}$ is smooth at $y$). Choose any $y' \in E(\mathbf{F}_v)$ which does not lie in this linear subvariety. By Hensel's lemma, $y'$ can be lifted to an 
$\OO_v$-point $\mathcal{P}' \in \mathcal{Y}'(\OO_v)$. The image $\mathcal{P} = \psi(\mathcal{P}') \in \mathcal{Y}(\OO_v)$ now satisfies the requirements.
\end{proof}

The following result (see \cite[Observation 2.2]{Denef}) follows from Greenberg's theorem \cite{Gre66}.

\begin{lemma} \label{lem:Denef_obs} Let $R$ be an excellent henselian discrete valuation ring with fraction field $K$. Let $f:X \to Y$ be a dominant morphism of integral schemes which are separated and of finite type over $R$, with $X \times_R K$ and $Y \times_R K$ smooth over $K$. Let $f':X' \to Y'$ be a modification of $f$, i.e.~a commutative diagram as in \eqref{def:modification}; here $\alpha_X:X' \to X$ and $\alpha_Y:Y'\to Y$ are proper birational morphisms over $R$, and $f':X' \to Y'$ is a dominant morphism of integral separated schemes over of finite type over $R$, with smooth generic fibre. Then $X(R) \to Y(R)$ is surjective if and only if $X'(R) \to Y'(R)$ is surjective.	
\end{lemma}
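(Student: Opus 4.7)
The plan is to deduce both implications from Greenberg's approximation theorem \cite{Gre66}, exploiting that $\alpha_X$ and $\alpha_Y$ are proper and birational. Geometrically, on the generic fibres these morphisms restrict to isomorphisms over dense open subschemes $V_K \subset Y_K$ and $U_K \subset f^{-1}(V_K) \subset X_K$, compatibly with $f$ and $f'$.

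I first treat the direction $(\Rightarrow)$. Fix $y' \in Y'(R)$ and set $y := \alpha_Y \circ y' \in Y(R)$. Consider the finite-type $R$-scheme $W := X \times_Y Y'$ and the canonical morphism $\pi : X' \to W$, $x' \mapsto (\alpha_X(x'), f'(x'))$. The composition $X' \xrightarrow{\pi} W \to X$ equals the proper morphism $\alpha_X$, and $W \to X$ is separated (being a base change of the proper $\alpha_Y$); hence $\pi$ is itself proper. Moreover $\pi$ is birational, being an isomorphism over the open subscheme of $W$ whose generic fibre corresponds to $U_K$. By the surjectivity hypothesis we pick $x \in X(R)$ with $f(x) = y$, and obtain $w := (x, y') \in W(R)$. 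By the valuative criterion of properness for $\pi$, a lift of $w$ to $x' \in X'(R)$ exists provided the generic fibre $w_K \in W_K(K)$ lifts to $X'_K(K)$; and any such lift $x'$ automatically satisfies $f'(x') = y'$, since the $R$-points $f'(x')$ and $y'$ of the separated scheme $Y'$ agree on the generic fibre by construction.

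The existence of the required generic-fibre lift is where Greenberg's theorem enters. Applied to the finite-type $R$-scheme of $R$-points of $X$ lying over $y$ (which is non-empty, since it contains $x$), Greenberg's theorem ensures that these $R$-points map onto a suitably large subset of the $K$-points of the generic fibre, dense in the $\mathfrak{m}$-adic topology inherited from the smoothness of $X_K$. Combining this with the density of the $\pi_K$-iso locus inside $W_K$, one may replace $x$ by another $R$-point $\tilde x$ with $f(\tilde x) = y$ such that $(\tilde x_K, y'_K) \in W_K(K)$ lies in the iso locus of $\pi_K$; this produces the desired generic-fibre lift.

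The converse $(\Leftarrow)$ is analogous: given $y \in Y(R)$, the same Greenberg-based argument applied to the proper birational $\alpha_Y$ produces a lift $y' \in Y'(R)$ of $y$; the surjectivity hypothesis then yields $x' \in X'(R)$ over $y'$, and $x := \alpha_X \circ x' \in X(R)$ satisfies $f(x) = \alpha_Y \circ f'(x') = \alpha_Y(y') = y$. The hardest step, common to both directions, is the Greenberg approximation: extracting an $R$-point whose generic fibre lies in a prescribed dense open subscheme. This requires a careful combination of Greenberg's theorem (in the form that $R$-points of a finite-type $R$-scheme form a constructible subset of the $K$-points stable under suitable approximation), smoothness of the generic fibres $X_K$ and $Y_K$, and the excellence and henselianity of $R$.
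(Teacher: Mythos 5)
Note first that the paper does not actually write out a proof of this lemma: it simply cites Denef's Observation~2.2 together with Greenberg's theorem. So the comparison is with that intended Greenberg-style argument, and against it your proposal has genuine gaps in both directions. In your direction $(\Rightarrow)$, having fixed $y'$ and $x$ with $f(x)=y=\alpha_Y(y')$, you want to replace $x$ by $\tilde x$ with $f(\tilde x)=y$ \emph{exactly} and $(\tilde x_K,y'_K)$ in the iso locus of $\pi_K$. Two things fail. First, density of the iso locus of $\pi_K$ in $W_K$ says nothing about its intersection with the fibre of $W_K\to Y'_K$ over the fixed point $y'_K$: when $y'_K$ lies over the centre of the modification (the interesting case), that entire fibre can sit inside the non-iso locus --- the fibres of $f$ and $f'$ over such points genuinely differ, which is the very phenomenon the paper studies --- so no choice of $\tilde x$ can work. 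Second, Greenberg's theorem does not make the $R$-points of $X$ lying over $y$ dense in anything: the fibre $f^{-1}(y_K)$ need not be smooth (only $X_K$ is) and may have very few $K$-points, so there is no room to move $\tilde x$ inside a fixed fibre. In your direction $(\Leftarrow)$ you assert that the given $y\in Y(R)$ itself lifts to $Y'(R)$; this is false in general, since the fibre of the proper birational $\alpha_Y$ over $y_K$ can be a positive-dimensional proper $K$-variety without $K$-points, and then the valuative criterion produces no lift. It is telling that your argument never uses the smoothness of $Y_K$ (or $Y'_K$): that hypothesis is there precisely to permit the step you are missing.

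The missing mechanism is perturbation of the \emph{base} point combined with Greenberg applied to the fibre over the \emph{original} point. Because $Y_K$ (resp.\ $Y'_K$, resp.\ $X_K$) is smooth and $R$ is excellent henselian, every section can be approximated modulo $\mathfrak{m}^n$, for every $n$, by sections whose generic fibre avoids any prescribed proper closed subset, e.g.\ the exceptional loci of $\alpha_Y$, $\alpha_X$; such perturbed sections do lift through the proper birational maps by the valuative criterion. This only yields points $x_n\in X(R)$ with $f(x_n)\equiv y\bmod\mathfrak{m}^n$, not points over $y$ itself; one then invokes Greenberg's theorem for the finite-type $R$-scheme $X\times_{Y,y}\Spec R$ (equivalently, $f(X(R))$ is closed in the valuation topology) to convert these approximate solutions into an exact one over $y$. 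In the other direction one needs, in addition, an approximate injectivity statement for $\alpha_Y$ along a section $\tilde y'$ whose generic fibre lies in the iso locus --- obtained by applying Greenberg to $Y'\times_{Y,\alpha_Y(\tilde y')}\Spec R$, whose only $R$-point is $\tilde y'$ --- in order to see that the constructed point $f'(x')$ is congruent to $\tilde y'$ to the required order, before a final application of Greenberg to $X'\times_{Y',y'}\Spec R$. None of these steps appears in your proposal, so as written it does not establish either implication.
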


We now prove one implication of  Theorem \ref{maintheorem}.

\begin{corollary} \label{non-sur}
Let $f:X \to Y$ be a dominant morphism of smooth, geometrically integral varieties
over a number field $k$. Assume that there exists a modification $f': X' \to Y'$ of $f$, with $X'$ and $Y'$ smooth, and a point $D \in (Y')^{(1)}$ such that the fibre $(f')^{-1}(D)$ is not pseudo-split over $\kappa(D)$. Let $f: \mathcal{X} \to \mathcal{Y}$ be a model of $X\to Y$ over $\OO_k$. Then there exists a set of places $v$ of positive density 
such that the map $\mathcal{X}(\OO_v) \to \mathcal{Y}(\OO_v)$ is not surjective.
\end{corollary}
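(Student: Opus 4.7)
The plan is to combine Theorem \ref{thm:not_surjective} with Lemma \ref{lem:Denef_obs} to transfer non-surjectivity from the modification $f'$ back down to $f$, and then extract a positive density set via the frobenian property of the $s$-invariants.

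First, I would spread out the modification. Starting from a given model $f:\mathcal{X}\to\mathcal{Y}$ over $\OO_k$, I would enlarge a finite set $S$ of places to obtain models $f':\mathcal{X}'\to\mathcal{Y}'$ over $\OO_{k,S}$ together with proper birational $\OO_{k,S}$-morphisms $\alpha_\mathcal{X}:\mathcal{X}'\to\mathcal{X}$ and $\alpha_\mathcal{Y}:\mathcal{Y}'\to\mathcal{Y}$ whose generic fibres recover the original modification. This is possible since properness and birationality spread out from the generic point, and only finitely many bad places need to be discarded. In particular, for every $v\notin S$, base-changing to $\OO_v$ produces a modification in the sense of Lemma \ref{lem:Denef_obs}.

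Next, I would analyse the $s$-invariant $s_{f',D}$. Since $(f')^{-1}(D)$ is not pseudo-split over $\kappa(D)$, Lemma \ref{lem:is_pseudo-split} tells us that the set
\[
T := \{v\in\Omega_{k,\f} : s_{f',D}(v) < 1\}
\]
is infinite. By Corollary \ref{cor:is_frob} the function $s_{f',D}$ is frobenian, so the set $\{v : s_{f',D}(v)=1\}$ is frobenian, and hence its complement $T$ (up to a finite set of ramified places) is also frobenian. An infinite frobenian set has positive density by \eqref{eqn:Cheb}, so $T$ has positive density.

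Finally, I would apply Theorem \ref{thm:not_surjective} to the spread-out model $f':\mathcal{X}'\to\mathcal{Y}'$ and the codimension-$1$ point $D\in (Y')^{(1)}$: after possibly enlarging $S$, for every $v\in T\setminus S$ we obtain that $\mathcal{X}'(\OO_v)\to\mathcal{Y}'(\OO_v)$ fails to be surjective. Since $\OO_v$ is an excellent henselian discrete valuation ring with fraction field $k_v$ over which $X$ and $Y$ are smooth, Lemma \ref{lem:Denef_obs} applies to the base change of our modification to $\OO_v$, yielding that $\mathcal{X}(\OO_v)\to\mathcal{Y}(\OO_v)$ is also non-surjective for every $v\in T\setminus S$. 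As $T$ has positive density, this finishes the argument.

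The main obstacle is essentially bookkeeping: one has to enlarge $S$ enough times to guarantee that (i) the modification spreads out with all the properties required by Lemma \ref{lem:Denef_obs}, and (ii) Theorem \ref{thm:not_surjective} can be applied uniformly for $v\notin S$. The genuinely conceptual input — that not being pseudo-split forces $s_{f',D}(v)<1$ on a \emph{frobenian}, hence positive density, set of places — is handled cleanly by Lemma \ref{lem:is_pseudo-split} together with Corollary \ref{cor:is_frob}.
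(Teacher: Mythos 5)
Your proposal is correct and follows essentially the same route as the paper: reduce to the modification via Lemma \ref{lem:Denef_obs} (after spreading out), use Lemma \ref{lem:is_pseudo-split} together with the frobenian property of $s_{f',D}$ to get a positive density set of places with $s_{f',D}(v)<1$, and conclude with Theorem \ref{thm:not_surjective}. The only differences are presentational (you apply Denef's observation at the end rather than at the outset, and cite Corollary \ref{cor:is_frob} in place of Lemma \ref{lem:s=1_density}), which amounts to the same argument.
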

\begin{proof}
Ignoring finitely many places, by Lemma \ref{lem:Denef_obs} we may assume that $X=X'$, $Y=Y'$, $f=f'$.
By Lemmas \ref{lem:s=1_density} and \ref{lem:is_pseudo-split}, there exists a set of places $v$
of positive density such that $s_{f,D}(v) < 1$. The result then follows from Theorem \ref{thm:not_surjective}.
\end{proof}

\section{Logarithmic geometry} \label{sec:loggeometry}

\subsection{Preliminaries} The results and proofs in this section are written in the language of logarithmic geometry. Although it would be possible to use the older language of toroidal 
embeddings, the logarithmic framework is a convenient and flexible language, which makes arguments more conceptual and transparent. 
A good reference for basic terminology on log schemes is Kato's foundational paper \cite{Kato1}. Let us briefly recall the notions which are most essential for us. We work exclusively with \emph{Zariski log schemes}:

\begin{definition} A \emph{Zariski log scheme} is a pair $(X,\mathcal{M}_X)$, where $X$ is a scheme and $\mathcal{M}_X$ is a sheaf of monoids on $X$ (for the Zariski topology), equipped with a homomorphism of sheaves of monoids $\alpha_X: \mathcal{M}_X \to (\OO_X,\cdot)$ inducing an isomorphism $$\alpha_X^{-1}(\OO_X^*) \cong \OO_X^*.$$
\end{definition}

The fundamental example for the purpose of this paper is the situation where the \emph{log structure} $\alpha_X: \mathcal{M}_X \to \OO_X$ on $X$ is \emph{divisorial}, i.e.~induced by a divisor on $X$:

\begin{example} Let $X$ be any scheme and let $D$ be a divisor on $X$. Denote the corresponding open immersion by $j: X \setminus D \hookrightarrow X$. Then the monoid $$\mathcal{M}_X = j_* \OO^*_{X \setminus D} \cap \OO_X$$ (together with the natural inclusion $\alpha_X: \mathcal{M}_X \hookrightarrow \OO_X$) defines a log structure on $X$, the \emph{divisorial log structure induced by $D$}. This yields a Zariski log scheme $(X,\mathcal{M}_X)$, which we will sometimes denote by $(X,D)$ depending on the context. \label{example:divisorial}
\end{example}

Zariski log schemes form a category, with morphisms defined in the obvious way, cf.~\cite[\S 1.1]{Kato1}. We will almost always work with a full subcategory: the category of \emph{fine log schemes} \cite[Definition 2.9]{Kato1}, or the category of \emph{fs (``fine and saturated'') log schemes}.

\subsubsection{Log regular schemes and their fans} \label{subsubsection:fans}

Given an arbitrary monoid $P$, we write $P^*$ for the subgroup of invertible elements, $P^\sharp = P/P^*$ for the associated sharp monoid and $P^\mathrm{gp}$ for the group envelope of $P$.
(A monoid is \emph{sharp} if 0 is the only invertible element.)

\begin{definition} A \emph{log regular scheme} is a Zariski log scheme which is log regular. \end{definition}

Recall that a Zariski log scheme $X$ is said to be log regular if it is fs and the following property is satisfied for every $x \in X$: if $I(x,\mathcal{M}_X)$ is the ideal of $\OO_{X,x}$ generated by $\mathcal{M}_{X,x} \setminus \OO_{X,x}^*$, then $\OO_{X,x}/I(x,\mathcal{M}_X)$ is a regular local ring and $$\dim \OO_{X,x} = \dim \OO_{X,x}/I(x,\mathcal{M}_X) + \mathrm{rk}_{\mathbf{Z}}\,(\mathcal{M}_{X,x}^\sharp)^\mathrm{gp}.$$ 
Log regular schemes are normal and Cohen--Macaulay by \cite[Theorem 4.4]{Kato2}. Moreover, they  admit the following description by \cite[Theorem 11.6]{Kato2}.

\begin{remark} Let $(X,\mathcal{M}_X)$ be a log regular scheme. Let $U$ be the largest open subset of $X$ on which the log structure $\mathcal{M}_X$ is trivial, i.e.~coincides with the sheaf $\OO_X^*$ of invertible functions. Let $j: U \hookrightarrow X$ be the corresponding open immersion. Then $X \setminus U$ is a divisor and $\mathcal{M}_X =  j_*\OO_U^* \cap \OO_X$, as in Example \ref{example:divisorial}. \end{remark}

To a log regular scheme $(X,\mathcal{M}_X)$ one can associate a useful combinatorial object, its \emph{fan}. This notion has been introduced by Kato \cite[\S 9.1, \S 9.3]{Kato2}:

\begin{definition} \label{def:fan} A \emph{fan} is a locally monoidal space $(F,\mathcal{M}_F)$ which admits an open covering by affine monoidal spaces $\mathrm{Spec}\,P$, where $P$ is an fs monoid. A fan is called \emph{smooth} if it has an open covering by monoidal spaces of the form $\mathrm{Spec}\,\NN^r$; see \cite[Definition 4.11]{ACMUW}.
\end{definition} 

An affine fan $\mathrm{Spec}\,P$ is sometimes called a \emph{Kato cone}, cf.~\cite[\S 4]{ACMUW}. The definition of the sheaf of monoids $\mathcal{M}_{\mathrm{Spec}\,P}$
is motivated by the definition of an affine scheme. The stalks of
$\mathcal{M}_{\mathrm{Spec}\,P}$ are sharp, hence the stalks of $\mathcal{M}_F$ are sharp too.

To an arbitrary log regular scheme, Kato associates a fan as follows:

\begin{definition} Let $(X,\mathcal{M}_X)$ be a log regular scheme. Consider the monoidal space $F(X)$ with underlying set $\{x \in X : I(x,\mathcal{M}_X) = \mathfrak{m}_x\}$; here $I(x,\mathcal{M}_X)$ is defined as above, and $\mathfrak{m}_x$ denotes the maximal ideal of $\OO_{X,x}$. The topology on $F(X)$ is the subspace topology induced by the Zariski topology on $X$. The sheaf of monoids on $F(X)$ is the restriction to $F(X)$ of the sheaf $\mathcal{M}_X^\sharp$ of (sharp) monoids on $X$.
\end{definition}

Kato proves that $F(X)$ is indeed a fan \cite[Theorem 10.1]{Kato2}. If $X$ is quasi-compact, then the underlying set of $F(X)$ is finite. If $X$ is a regular scheme and if the log structure on $X$ is the divisorial log structure associated to a divisor on $X$ with strict normal crossings, then $F(X)$ is smooth (see \cite[\S 4]{ACMUW} and the references given there).

There exists a continuous, open morphism of monoidal spaces $\pi: (X,\mathcal{M}_X^\sharp) \to F(X)$ \cite[\S 10.2]{Kato2}. This allows one to stratify the scheme $X$ into locally closed subsets:

\begin{definition} Given a log regular scheme $(X,\mathcal{M}_X)$ and given $x \in F(X)$, denote $U(x) = \pi^{-1}(x)$, with $\pi: (X,\mathcal{M}_X) \to F(X)$ as above. When equipped with its reduced subscheme structure, this is a locally closed subscheme of $X$. This yields the so-called \emph{logarithmic stratification} $(U(x))_{x \in F(X)}$ of $X$ into locally closed subsets. \end{definition}

With notation as above, we denote by $\overline{U}(x)$ the Zariski closure of $U(x)$. Since $\pi$ is continuous and open, $\overline{U}(x)$ is the inverse image under $\pi$ of the closure of $x$ in $F(X)$. Any stratum $U(x)$ is regular and irreducible, but its closure $\overline{U}(x)$ may very well be singular.

The points of a Kato fan $F$ are in a natural bijection with the Kato subcones of
$(F,\mathcal{M}_F)$: every subcone has a unique closed point 
and can be recovered as the smallest open affine neighbourhood of this point,
see \cite[Lemma 4.6]{ACMUW}.

Let $F(\NN)=\Hom(\Spec\,\NN,F)$.
For a smooth fan $F$ we define the {\em height} of a morphism $g:\Spec\,\NN\to F$
as follows. The morphism $g$ sends the closed point $\NN_{>0}$ of $\Spec\,\NN$
to the closed point of a unique Kato subcone $\Spec\,\NN^r$, so 
$g$ factors through $\Spec\,\NN\to\Spec\,\NN^r$.
The dual map is a morphism of monoids $\NN^r\to\NN$;
we define the height $h_F(g)\in\NN$ to be the image of the sum of the canonical generators
of $\NN^r$. It is clear that for any $m\in\NN$ the set
$F(\NN)_{\leq m}=\{P\in F(\NN) : h_F(P)\leq m\}$ is finite.

\subsubsection{Log smooth morphisms and log blow-ups} \label{logblowups}

Log smooth morphisms can be defined in many different ways. We recall the definition which is usually called \emph{Kato's criterion}, stated and proven for \'etale log structures in \cite[Theorem 3.5]{Kato1}. The appropriate version for Zariski log structures seems to be folklore and can be found in \cite[Corollary 12.3.37]{GR}.

\begin{theorem} \label{theorem:Kato}
 Let $f: (X, \mathcal{M}_X) \to (Y,\mathcal{M}_Y)$ be a morphism of Zariski fs log schemes. Then $f$ is log smooth (resp. log \'etale) if and only if the following condition is satisfied. Given any point $x \in X$, an affine open neighbourhood $V = \mathrm{Spec}\,B$ of $f(x)$ in $Y$, and a chart $Q \to B$ for the log structure around $y$, there exist

\begin{itemize}
\item[$-$] an \emph{\'etale} affine neighbourhood $g: U = \mathrm{Spec}\,A \to X$ of $x$,
\item[$-$] a chart $P \to A$ for the log structure $g^* \mathcal{M}_X$ on $U$, and
\item[$-$] a homomorphism $\varphi: Q \to P$ yielding a chart for $(U,g^* \mathcal{M}_X) \to (V,\mathcal{M}_V),$
\end{itemize}
such that both of the following conditions are satisfied:
\begin{itemize}
\item[$-$] $\mathrm{ker}\,\varphi^\mathrm{gp}$ and the torsion of $\mathrm{coker}\,\varphi^\mathrm{gp}$ (resp. $\mathrm{ker}\,\varphi^\mathrm{gp}$ and $\mathrm{coker}\,\varphi^\mathrm{gp}$) are finite abelian groups, the orders of which are invertible on $U$;
\item[$-$] the induced morphism $U \to V \times_{\mathrm{Spec}\,\mathbf{Z}[Q]}\mathrm{Spec}\,\mathbf{Z}[P]$ is classically smooth.
\end{itemize}
\end{theorem}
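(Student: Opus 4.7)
The plan is to follow the classical argument of Kato \cite{Kato1}, adapted to the Zariski setting as in \cite{GR}. Both log smoothness and the existence of the chart data described are étale-local on the source, so it suffices to work in an étale neighborhood of a single point $x \in X$.

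For the ``if'' direction, assume the chart $\varphi: Q \to P$ and étale neighborhood $U \to X$ exist with the stated properties. The canonical morphism $\Spec \mathbf{Z}[P] \to \Spec \mathbf{Z}[Q]$, equipped with the toric log structures induced by $P$ and $Q$, is log smooth: one reduces the infinitesimal lifting criterion for monoid homomorphisms to a lifting problem for homomorphisms of finitely generated abelian groups, which the hypothesis on $\ker \varphi^{\mathrm{gp}}$ and the torsion of $\mathrm{coker}\,\varphi^{\mathrm{gp}}$ (together with invertibility of their orders on $U$) makes solvable. The morphism $U \to V \times_{\Spec \mathbf{Z}[Q]} \Spec \mathbf{Z}[P]$ is classically smooth and strict, so it is log smooth, and composition then yields log smoothness (respectively étaleness) of $U \to V$.

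The ``only if'' direction is the substantive part. Given $f$ log smooth at $x$ and the chart $Q \to B$ on $Y$, one first produces \emph{some} chart $P_0 \to A_0$ for $\mathcal{M}_X$ on an étale neighborhood, extending the pullback of $Q \to B$ along a homomorphism $\varphi_0: Q \to P_0$; this is possible because $\mathcal{M}_X$ is fine. The homomorphism $\varphi_0$ will generally not satisfy the prescribed conditions on its kernel and the torsion of its cokernel, so the technical heart of the argument is to replace $P_0$ by a monoid $P$ admitting a homomorphism $P_0 \to P$, and to produce a new chart $P \to A$ on a further étale localization, in such a way that the composite $\varphi: Q \to P$ does satisfy the conditions. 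One constructs $P$ via a saturation and quotient procedure on $P_0^{\mathrm{gp}}$ that eliminates the offending torsion while keeping all orders invertible on $U$; the chart $P \to A$ is then obtained by lifting an appropriate monoid homomorphism along a suitable nilpotent thickening, using the infinitesimal lifting property that \emph{defines} log smoothness.

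The main obstacle is showing that the replacement monoid $P$ and the chart $P \to A$ can be chosen to simultaneously guarantee the structural condition on $\varphi^{\mathrm{gp}}$ \emph{and} the classical smoothness of $U \to V \times_{\Spec \mathbf{Z}[Q]} \Spec \mathbf{Z}[P]$. Once $P$ is well-chosen, this classical smoothness amounts to a Jacobian condition on the ``residual'' variables complementary to the monoid variables, and is forced by the log smoothness of $f$ via the local structure of the sheaf of log differentials. The Zariski (rather than étale) setting requires additional care to ensure that the charts chosen to trivialize the log structure are compatible with the Zariski-local structure of $\mathcal{M}_X$; this is what the refinement \cite[Corollary 12.3.37]{GR} supplies beyond Kato's original étale-topology version.
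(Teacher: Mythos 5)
There is nothing in the paper to compare your argument with: this statement is not proved there. It is quoted as \emph{Kato's criterion}, with the proof attributed to Kato's original paper (Theorem 3.5 of \cite{Kato1}, in the \'etale setting) and the Zariski-topology refinement cited to \cite[Corollary 12.3.37]{GR}; the authors explicitly treat it as a known result. So the only question is whether your sketch would stand on its own, and as written it does not yet. Your ``if'' direction is the standard argument and is fine in outline: log smoothness of $\mathrm{Spec}\,\mathbf{Z}[P] \to \mathrm{Spec}\,\mathbf{Z}[Q]$ under the hypothesis on $\ker\varphi^{\mathrm{gp}}$ and the torsion of $\mathrm{coker}\,\varphi^{\mathrm{gp}}$ (checked via the infinitesimal lifting criterion, or by a direct computation of log differentials), then strictness plus classical smoothness of $U \to V \times_{\mathrm{Spec}\,\mathbf{Z}[Q]} \mathrm{Spec}\,\mathbf{Z}[P]$, then stability of log smoothness under base change and composition.

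The gap is in the ``only if'' direction, which you correctly identify as the substantive half but then leave at the level of assertion. The route you describe --- take an arbitrary chart $P_0$ extending $Q$, then repair the torsion defects of $Q \to P_0$ by a ``saturation and quotient procedure'' on $P_0^{\mathrm{gp}}$ --- is not how the argument works and is not obviously workable: the issue is not to massage a given chart, but to build the right one. In Kato's proof the monoid $P$ is constructed directly from the log differentials: one uses that $\omega^1_{X/Y}$ is locally free for a log smooth morphism, chooses $t_1,\dots,t_r \in \mathcal{M}^{\mathrm{gp}}_{X,x}$ whose images $d\log t_i$ form a basis of the stalk, forms the homomorphism $Q^{\mathrm{gp}} \oplus \mathbf{Z}^r \to \mathcal{M}^{\mathrm{gp}}_{X,x}$, takes $P$ to be the preimage of $\mathcal{M}_{X,x}$, and then proves two nontrivial things: that $P$ is indeed a chart in a neighbourhood, and that the induced map $U \to V \times_{\mathrm{Spec}\,\mathbf{Z}[Q]} \mathrm{Spec}\,\mathbf{Z}[P]$ is smooth (in fact \'etale up to localization), by comparing modules of (log) differentials. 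Your sketch replaces exactly these two verifications by the sentence that classical smoothness ``is forced by the log smoothness of $f$ via the local structure of the sheaf of log differentials'', which is the conclusion, not an argument. Likewise, the passage from the \'etale to the Zariski statement is acknowledged but not supplied; it rests on the existence of Zariski-local charts for Zariski fs log structures and is precisely the content you would need to import from \cite{GR}. If your intention is simply to invoke the literature, you should do so openly, as the paper does; if you intend a self-contained proof, the construction of $P$ from a basis of $\omega^1_{X/Y}$ and the differential comparison must be carried out.
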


In fact, an equivalent version of the criterion says that $U \to V \times_{\mathrm{Spec}\,\mathbf{Z}[Q]}\mathrm{Spec}\,\mathbf{Z}[P]$ can even be taken to be \'etale instead of smooth (see \cite[Theorem 3.5]{Kato1}).

An important basic fact is that any fs log scheme which is log smooth over a log regular scheme is again a log regular scheme (this is \cite[Theorem 8.2]{Kato2}). 

An essential class of log smooth and log \'etale morphisms is the class of log blow-ups. 
They are constructed in terms of subdivisions.
A \emph{subdivision} of a fan $F$ as in Definition \ref{def:fan} is a morphism of fans $\varphi: F' \to F$ satisfying the properties in \cite[Definition 9.6]{Kato2}. The following key example
will be used in the proof of Theorem \ref{maintheorem}.

\begin{example} \label{bary}
The \emph{barycentric subdivision} $B(F)$ of a fan $F$ is defined as follows.
The \emph{barycentre} of a Kato cone $\Spec\, P$ is a
canonical morphism $\Spec\,\NN\to \Spec P$ which, under the canonical isomorphism
$\Hom(\Spec\,\NN,\Spec\,P)=\Hom(P,\NN)$, corresponds to the map sending the
generator of each 1-dimensional face of $P$ to 1. One constructs $B(F)$
by performing the so-called star subdivision of each cone of $F$ along its barycentre,
in decreasing order of dimension. See \cite[Example 4.10 (ii)]{ACMUW} for details.
\end{example}

To a subdivision $\varphi: F' \to F(X)$ of the fan of a log regular scheme $(X,\mathcal{M}_X)$ 
one associates a log regular scheme $(X',\mathcal{M}_X')$ with $F(X')=F'$
and a morphism, called a \emph{log blow-up},
$$\mathrm{Bl}_\varphi: (X',\mathcal{M}_X') \to (X,\mathcal{M}_X)$$
such that the induced map $F(X') \to F(X)$ is exactly $\varphi$. 
The morphism $\mathrm{Bl}_\varphi$ is log \'etale and birational \cite[Proposition 10.3]{Kato2}. 
If $\varphi$ is proper \cite[Definition 9.7]{Kato2},
then the natural map $\varphi_*:F'(\mathbf{N}) \to F(X)(\mathbf{N})$ is bijective;
in this case $\mathrm{Bl}_\varphi$ is a proper morphism \cite[Proposition 9.11]{Kato2}.

Log blow-ups can be used to resolve  singularities of log regular schemes, as explained by Kato in \cite[\S 10.4]{Kato2}. They are stable under base change in the category of fs log schemes \cite[Corollary 4.8]{Niziol} and under composition \cite[Corollary 4.11]{Niziol}.


\begin{lemma} \label{one}
Let $F$ be a smooth quasi-compact Kato fan. Let $m$ be a positive integer. Then there exists
a smooth, proper subdivision $\varphi: F'\to F$ such that for every point
$f\in F(\NN)_{\leq m}$ the induced point $\varphi_*^{-1} f \in F'(\NN)$ lies in $F'(\NN)_{\leq 1}$. 
\end{lemma}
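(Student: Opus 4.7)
The plan is to construct $\varphi$ as a composition of finitely many barycentric subdivisions (Example~\ref{bary}). Since $F$ is quasi-compact, it contains only finitely many Kato subcones, each of the form $\Spec\,\NN^r$, and the set $F(\NN)_{\leq m}$ is finite. Because the barycentric subdivision is canonical and respects inclusions of faces, I would analyse the situation one cone at a time; the local constructions will then glue to a global smooth, proper subdivision of $F$ without any further choices.

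On a single cone $\sigma = \Spec\,\NN^r$, the key computation is that one barycentric subdivision strictly decreases the height of every lattice point whose height exceeds $1$, provided the ambient subcone has dimension at least $2$. Indeed, for a point with coordinates $v = (a_1, \ldots, a_r)$ in the interior of $\sigma$, pick an index $i$ with $a_i = \min_j a_j \geq 1$; after the star subdivision along the barycentre $b = e_1 + \cdots + e_r$, the point $v$ lies in the subcone spanned by $\{e_j : j \neq i\} \cup \{b\}$ with new coordinates $((a_j - a_i)_{j \neq i}, a_i)$ and new height $\sum_j a_j - (r-1)a_i$. Since $r \geq 2$ and $a_i \geq 1$, this is a strict decrease. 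An inductive application of this reduction (at most $m$ times) drives every point of initial height at most $m$ down to height at most $1$, at which stage it is a ray generator of the refined fan.

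It remains to check that each barycentric subdivision is smooth and proper. Smoothness follows from a determinant calculation: the $r$ vectors $\{e_j : j \neq i\} \cup \{b\}$ have determinant $\pm 1$, so they form a $\ZZ$-basis of $\ZZ^r$. Properness follows from the construction of the star subdivision along a lattice point (the induced map on $\NN$-points is bijective), and both properties are preserved by composition. The main technical obstacle is to ensure that the local subdivisions on individual Kato subcones glue to a global subdivision of $F$; this is automatic here because the barycentric subdivision is a canonical construction compatible with all face inclusions, so no extra gluing data needs to be chosen.
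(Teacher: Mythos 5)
Your approach --- iterating barycentric subdivisions and tracking the height --- is exactly the paper's (its proof of Lemma \ref{one} is the single line that iterating the barycentric subdivision $m-1$ times suffices), and your per-step computation is correct: writing $v=a_ib+\sum_{j\neq i}(a_j-a_i)e_j$ with $a_i=\min_j a_j$ shows that the star subdivision at the barycentre drops the height of any point lying in a subcone of dimension $r\geq 2$ by at least $(r-1)a_i\geq 1$; and since for smooth cones every lattice point is a nonnegative integral combination of the generators, heights can only decrease under the further star subdivisions making up the full barycentric subdivision, so your bound persists. The smoothness (determinant $\pm1$) and properness checks, and the remark that no gluing data is needed, are also fine.

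There is, however, a genuine gap at the final step of your induction. The decrease argument applies only while the point sits in a subcone of dimension $\geq 2$; a point which lies, or comes to lie, on a ray with coefficient $\geq 2$ gets stuck there, because a one-dimensional cone admits no nontrivial subdivision and (proper) subdivisions do not change the lattice, so its height can never drop. Concretely, the point $(2,2)$ of $(\Spec\,\NN^2)(\NN)$ has height $4$, becomes $2b$ after one barycentric subdivision, and retains height $2$ under every further smooth proper subdivision; likewise the point $1\mapsto 2$ of $(\Spec\,\NN)(\NN)$ has height $2$ in any subdivision. So your concluding claim that every point of initial height $\leq m$ is driven ``down to height at most $1$, at which stage it is a ray generator'' does not follow from the reduction (and is in fact unattainable). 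What the induction does give is that after $m-1$ barycentric subdivisions every point of height $\leq m$ factors through a Kato subcone of dimension $\leq 1$, i.e.\ becomes a \emph{positive multiple} of a ray generator. That weaker conclusion is the one the paper actually uses (in Proposition \ref{p1} one only needs $\sigma_Y^{-1}(P)\bmod\pi_v$ to land at a smooth point of the reduction of $E'$, which is the ``dimension $\leq 1$'' condition), so the defect mirrors an imprecision in the lemma as formulated rather than a flaw in the method; but as written your last deduction fails and should be replaced by the ``dimension $\leq 1$'' statement, with the terminal case of a ray carrying coefficient $\geq 2$ acknowledged explicitly.
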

\begin{proof}
It is enough to iterate the barycentric subdivision ${m-1}$ times.
\end{proof}

\subsubsection{Weak toroidalisation} We now briefly recall the \emph{weak toroidalisation theorem} of Abramovich--Karu in the language of log geometry; this will be a major tool for us in \S \ref{sec:surjectivity}, and figured already prominently in \cite{Denefbis} and \cite{Denef}.

\begin{theorem}[Abramovich--Karu, Denef] \label{ADK} 
Let $f: X \to Y$ be a dominant morphism of integral varieties
over a field $k$ of characteristic zero.

Then there exist a dominant morphism $f': X' \to Y'$ of smooth integral $k$-varieties,
proper birational morphisms $\alpha_X: X' \to X$ and $\alpha_Y: Y' \to Y$, 
and strict normal crossings divisors $D'\subset X'$ and $E'\subset Y'$ such that
 \begin{itemize}
\item[(1)] the diagram 
$$\xymatrix{
X'\ar[r]^{\alpha_X}\ar[d]_{f'}& X\ar[d]^f\\
Y'\ar[r]^{\alpha_Y}& Y}$$
commutes;
\item[(2)] $f'$ induces
a log smooth morphism of log regular schemes $(X',D') \to (Y',E')$;
\item[(3)] $(f')^{-1}(Y' \setminus E') = X' \setminus D'$.
\end{itemize}
\end{theorem}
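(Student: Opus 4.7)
The plan is to import the weak toroidalisation theorem in its classical form from \cite{AbramovichKaru} (refined as in \cite{AbramovichDenefKaru}) and then to translate the conclusion from the language of toroidal embeddings into that of log geometry via Kato's criterion. The classical statement produces proper birational morphisms $\alpha_X: X' \to X$ and $\alpha_Y: Y'\to Y$ with $X'$ and $Y'$ smooth, together with toroidal embeddings without self-intersection $U_{X'}\subset X'$ and $U_{Y'}\subset Y'$, such that the induced map $f':X'\to Y'$ is toroidal with respect to these embeddings and $U_{X'}=(f')^{-1}(U_{Y'})$. Setting $D'=X'\setminus U_{X'}$ and $E'=Y'\setminus U_{Y'}$, both complements are strict normal crossings divisors because $X'$ and $Y'$ are smooth and the embeddings are without self-intersection; this already delivers condition (3).

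Next I would translate into logarithmic language. Endow $X'$ and $Y'$ with the divisorial log structures induced by $D'$ and $E'$ respectively, as in Example \ref{example:divisorial}. Since $(X',D')$ and $(Y',E')$ are smooth $k$-schemes with an SNC boundary, they are log regular (in fact log smooth over $\Spec k$ with trivial log structure). The content of ``toroidal'' is that, locally on $X'$ in \'etale coordinates, $f'$ is modelled by an equivariant dominant morphism of affine toric varieties whose source and target have toric boundary equal to $D'$ and $E'$ respectively; Kato's criterion (Theorem \ref{theorem:Kato}) then identifies this condition with log smoothness of $(X',D')\to(Y',E')$, in view of characteristic zero (so the torsion in cokernel of the monoid map is automatically invertible). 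This yields condition (2), and condition (1) is part of the construction.

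The step requiring the most care is ensuring that the toroidal embeddings produced by weak toroidalisation are \emph{without self-intersection}, since otherwise the boundaries $D'$ and $E'$ would merely be toroidal and not strictly normal crossing, and the Zariski divisorial log structure would fail to be fs in the sense required by Theorem \ref{theorem:Kato}. This is precisely the additional refinement which Abramovich--Karu achieve by performing further log blow-ups of the toroidal structure (barycentric-type subdivisions of the cone complex, cf.\ Example \ref{bary}); simultaneously one keeps track of the identity $U_{X'}=(f')^{-1}(U_{Y'})$ to preserve condition (3). Once these refinements are in place, the translation to log geometry is formal, and the theorem follows.
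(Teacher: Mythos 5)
Your proposal is essentially the paper's own treatment: the paper does not reprove this result but cites \cite{AbramovichKaru} and \cite{AbramovichDenefKaru}, noting that the Zariski (strict normal crossings) form follows because the toroidal embeddings in \cite[Theorem~1.1]{AbramovichDenefKaru} can be taken strict, i.e.\ without self-intersection, which is exactly the dictionary you invoke between strict toroidal morphisms and log smooth morphisms of divisorial Zariski log structures. Your sketch of that translation via Kato's criterion is correct, so the proposal matches the intended argument.
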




This result was first proved over $\mathbf{C}$ in \cite{AbramovichKaru}  and subsequently over arbitrary fields of characteristic zero in \cite{AbramovichDenefKaru}. For an alternative treatment by Gabber and Illusie--Temkin in a more general setting, see \cite[\S 3.8]{illusie}. 

We work with Zariski log schemes instead of \'etale log schemes; Theorem \ref{ADK} is true in both settings, but the statement for Zariski log schemes is slightly stronger. This corresponds to the fact that in \cite[Theorem 1.1]{AbramovichDenefKaru}, the toroidal embeddings can be taken to be \emph{strict}. 
The following example illustrates the difference between both settings:

\begin{example} Let $k$ be a field of characteristic zero such that $-1$ is not a square in $k$. Consider the conic bundle $X\subset \PP^2_k\times{\bf A}^1_k$ given by
$x^2 + y^2 = tz^2$.
Let $\pi:X\to{\bf A}^1_k$ be the natural projection
to the coordinate $t$. Here the total space $X$ is smooth, but the fibre $\pi^{-1}(0)$ is irreducible and singular. Hence, when equipped with the log structure induced by $\pi^{-1}(0)$, the log scheme $(X,\pi^{-1}(0))$ is \emph{not} log regular in our sense, i.e.~as a Zariski log scheme (it is however log regular as an \emph{\'etale} log scheme). 

However, the normalisation of $\pi^{-1}(0)$ is smooth.
If $\psi: X' \to X$ is the blow-up of $X$ at the point $x=y=t=0$, $z=1$, then $(X',(\pi\circ\psi)^{-1}(0))$ is log smooth, and $(X',(\pi\circ\psi)^{-1}(0)) \to ({\bf A}^1_k,0)$ satisfies the requirements of Theorem \ref{ADK}.
\end{example}

\subsection{Logarithmic Hensel's lemma}  In \cite[\S 3.2]{Denefbis}, Denef proved a logarithmic version of Hensel's lemma. We will present a reformulation of this result, with a different proof, written down by Cao in his unpublished MSc thesis \cite{Cao}. 
 
\begin{proposition} \label{logHensel}Let $f: (X,\mathcal{M}_X) \to (Y,\mathcal{M}_Y)$ be a log smooth morphism of fs log schemes. Let $R$ be a complete discrete valuation ring. Let $S = \mathrm{Spec}\,R$ and let $j: s \hookrightarrow S$ be the inclusion of the closed point. Given a commutative diagram 
$$\xymatrix{s^\dagger \ar[r]^{u\ \ \ } \ar[d]_{j} & (X,\mathcal{M}_X) \ar[d]^{f} \\ 
S^\dagger \ar[r]^{t \ \ \ } \ar@{.>}[ur]^{g}& (Y,\mathcal{M}_Y)}$$
of fs log schemes, there is a morphism $g: S^\dagger \to (X,\mathcal{M}_X)$ of fs log schemes such that $gj = u$ and $fg = t$.


\end{proposition}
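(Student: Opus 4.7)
The plan is to reduce, using Kato's criterion for log smoothness (Theorem \ref{theorem:Kato}), to a chart-level lifting problem for monoid homomorphisms, and then to solve that problem using completeness of $R$. Since $R$ is complete and hence Henselian, classical Hensel's lemma allows one to lift $S$-points through \'etale morphisms of schemes, so the problem is local on $X$ in the \'etale topology at $x:=u(s)$. Applying Kato's criterion, after replacing $X$ by an \'etale affine neighborhood of $x$ and $Y$ by a Zariski neighborhood of $y:=f(x)$, we may assume that there are charts $Q\to \mathcal{M}_Y(Y)$ and $P\to \mathcal{M}_X(X)$ and a monoid homomorphism $\varphi\colon Q\to P$ inducing a chart for $f$, with $\ker\varphi^{\mathrm{gp}}$ and the torsion of $\mathrm{coker}\,\varphi^{\mathrm{gp}}$ finite of orders invertible on $X$, such that the induced scheme morphism
\[
h\colon X\longrightarrow \widetilde{Y}:=Y\times_{\Spec\mathbf{Z}[Q]}\Spec\mathbf{Z}[P]
\]
is \'etale. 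Classical Hensel applied to $h$ then reduces the problem to constructing a morphism of fs log schemes $\widetilde g\colon S^\dagger\to \widetilde{Y}$ over $Y$ whose reduction modulo $\mathfrak m$ is the composite $s^\dagger\to X\to \widetilde{Y}$.

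By the universal property of the fibre product, giving $\widetilde g$ is equivalent to giving a monoid homomorphism $\psi\colon P\to \Gamma(S,\mathcal{M}_{S^\dagger})$ that extends the composite $Q\to \Gamma(S,\mathcal{M}_{S^\dagger})$ furnished by $t$ and whose reduction modulo $\mathfrak m$ is the composite $P\to \Gamma(s,\mathcal{M}_{s^\dagger})$ furnished by $u$. Passing to stalks at $s$ and writing $M:=\mathcal{M}_{S^\dagger,s}$ and $\overline M:=\mathcal{M}_{s^\dagger,s}$, we compare
\[
1\to R^{*}\to M^{\mathrm{gp}}\to (M^{\sharp})^{\mathrm{gp}}\to 1\quad\text{and}\quad 1\to \kappa(s)^{*}\to \overline M^{\mathrm{gp}}\to (\overline M^{\sharp})^{\mathrm{gp}}\to 1.
\]
The natural map on sharp components is an isomorphism, since $\overline M$ is the pushout $M\oplus_{R^{*}}\kappa(s)^{*}$, while on the unit side $R^{*}\twoheadrightarrow\kappa(s)^{*}$ is surjective with kernel $1+\mathfrak m$ by completeness of $R$. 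Thus the sharp part of $\psi$ is uniquely determined by $u$, and what remains is a $Q$-equivariant lift of the ``unit part'' of $P\to \overline M$ through $R^{*}\twoheadrightarrow\kappa(s)^{*}$.

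The main difficulty is producing this lift compatibly with $\varphi$. The obstruction lies in $\mathrm{Ext}^{1}(\mathrm{coker}\,\varphi^{\mathrm{gp}},\,1+\mathfrak m)$. The free abelian part of $\mathrm{coker}\,\varphi^{\mathrm{gp}}$ contributes nothing, and on each cyclic torsion summand $\mathbf{Z}/n\mathbf{Z}$ the relevant group is $(1+\mathfrak m)/(1+\mathfrak m)^{n}$; this vanishes because Kato's criterion forces $n$ to be nonzero in $\kappa(s)$ and hence invertible in $R$, so classical Hensel applied to $T^{n}-(1+m)$ produces the required $n$-th roots. The analogous hypothesis on $\ker\varphi^{\mathrm{gp}}$ ensures that the lift obtained is canonically determined modulo invertibles. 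Since the sharp component of the resulting group homomorphism $P^{\mathrm{gp}}\to M^{\mathrm{gp}}$ is fixed to take values in $M^{\sharp}$, the homomorphism restricts to a monoid homomorphism $\psi\colon P\to M$, producing $\widetilde g$ and hence $g$.
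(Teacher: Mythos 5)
Your strategy (Kato's chart criterion, strictness plus classical Hensel along the \'etale map $X\to \widetilde Y$, and an explicit monoid-lifting argument with the obstruction in $\mathrm{Ext}^1(\mathrm{coker}\,\varphi^{\mathrm{gp}},1+\mathfrak m)$ killed by invertibility of the torsion orders) is a legitimate alternative route, close in spirit to Denef's original argument rather than to the proof in the paper. The monoid-level part is essentially sound: since $M:=R\setminus\{0\}$ is the full preimage of $\overline M:=M/(1+\mathfrak m)$ in $M^{\mathrm{gp}}$, it suffices to solve the problem on group envelopes, and there the free part of the cokernel causes no obstruction while each $\ZZ/n$ with $n$ invertible in $R$ contributes $(1+\mathfrak m)/(1+\mathfrak m)^n=0$.

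However, there is a genuine gap at the very first reduction. Kato's criterion (Theorem \ref{theorem:Kato}) only produces the chart on an \emph{\'etale} affine neighbourhood $U\to X$ of $x=u(s)$, and to transfer the lifting problem to $U$ you must first lift the given morphism $u\colon s^\dagger\to (X,\mathcal{M}_X)$ to $U$. This is not automatic: the points of $U$ above $x$ may have residue fields that are nontrivial separable extensions of $\kappa(x)$ which do not embed into $\kappa(s)$, in which case $u$ does not factor through $U$ at all. Your justification (``classical Hensel's lemma allows one to lift $S$-points through \'etale morphisms'') does not apply here, because that lifting needs a $\kappa(s)$-rational point of the closed fibre of the \'etale map to start from -- which is precisely what is missing. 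In the classical smooth case the analogous factorisation (\'etale over affine space) exists \emph{Zariski}-locally, which is why no such issue arises there; in the log setting the \'etale localisation in Kato's criterion is genuinely needed in general (e.g.\ to extract roots of units), so the step must be addressed, for instance by first base changing to $S^\dagger$ and then showing that a chart can be chosen compatibly with the rational point furnished by $u$ -- an additional idea not present in your write-up. (Passing to an unramified extension $R'$ of $R$ does not help, since a section over $R'$ does not descend to $R$.) A smaller point: the hypothesis on $\ker\varphi^{\mathrm{gp}}$ is needed to guarantee that $\tau^{\mathrm{gp}}$ kills $\ker\varphi^{\mathrm{gp}}$ (using that $1+\mathfrak m$ has no nontrivial torsion of invertible order), so that a homomorphism $\psi$ with $\psi\circ\varphi^{\mathrm{gp}}=\tau^{\mathrm{gp}}$ can exist at all; your remark about ``canonical determination modulo invertibles'' does not capture this. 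For comparison, the paper's proof avoids charts entirely: it invokes the infinitesimal lifting property defining log smoothness to lift successively over $S_n^\dagger=\Spec(R/\mathfrak m^n)$ and uses completeness of $R$ to pass to the limit, which sidesteps the \'etale-localisation problem altogether.
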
  
 
Here $S^\dagger$ denotes the scheme $S$ equipped with the divisorial log structure induced by the closed point $s$. If $\pi$ is a uniformiser of $R$, a chart for the log structure is given by the map $\mathbf{N} \to R$ which sends $1$ to $\pi$. Similarly, $s^\dagger$ denotes the log point, i.e.~$\mathrm{Spec}\,k$ equipped with the pullback of the log structure on $S^\dagger$ to $s$ (with a chart $\mathbf{N} \to k$ sending 1 to $0$).

\begin{proof} Let $\mathfrak{m}=(\pi)$ be the maximal ideal of $R$.
Write $R_n = R/\mathfrak{m}^n$ and $S_n = \mathrm{Spec}\,R_n$. 
In particular, $s=S_1$.
We have strict closed immersions 
$$i_n: S_n^\dagger \to S^\dagger,\quad i_{n - 1,n}: S_{n - 1}^\dagger \to S_{n}^\dagger$$ 
for all $n \geq 1$. The definition of a log smooth morphism in terms of infinitesimal liftings \cite[\S 3.2, \S 3.3]{Kato1} implies that for any $n \geq 1$ one can find a morphism $$g_n: S_n^\dagger \to (X,\mathcal{M}_X)$$ such that $g_1 = u$ and such that for any $n \geq 1$ both triangles in the diagram
$$\xymatrix{S_{n - 1}^\dagger \ar[r]^{g_{n - 1}} \ar[d]_{i_{n - 1,n}} & (X,\mathcal{M}_X) \ar[d]^{f} \\ S_{n}^\dagger \ar[r]^{t \circ i_n} \ar@{.>}[ur]^{g_n} & (Y,\mathcal{M}_Y)}$$
commute. Since the ring $R = \varinjlim R_n$ is complete, the morphisms $g_n$ induce a well-defined morphism of schemes $g: S \to X$. It is then easy to see that $g$ actually defines a morphism of \emph{log} schemes $g: S^\dag \to (X,\mathcal{M}_X)$ satisfying all requirements. 
\end{proof}

\subsection{Log smooth morphisms and irreducible components} The goal of this section is to prove a basic result (probably well-known to experts) concerning the variation of the (geometric) irreducible components of the fibres of a proper, log smooth morphism of log regular schemes.

If $f: (X,\mathcal{M}_X) \to (Y,\mathcal{M}_Y)$ is a log smooth morphism of log regular schemes, then $f$ induces a morphism of the associated Kato fans $F(f): F(X) \to F(Y)$. The morphisms $f$ and $F(f)$ are compatible with the characteristic morphisms $\pi_X: (X,\mathcal{M}_X^\sharp) \to F(X)$ and $\pi_Y: (X,\mathcal{M}_Y^\sharp) \to F(Y)$. Hence, given $x \in F(X)$ and $y = F(f)(x) \in F(Y)$, we get induced morphisms $U(x) \to U(y)$ and $\overline{U}(x) \to \overline{U}(y)$.

We recall the localisation procedure in \cite[\S 7]{Kato2}. If $(X,\mathcal{M}_X)$ is a log regular scheme, then the ``boundary'' $\partial U(x) = \overline{U}(x) \setminus U(x)$ of the closed subscheme $\overline{U}(x)$ is a divisor, inducing a log structure (cf.~Example \ref{example:divisorial}). Kato proves that the resulting log scheme $(\overline{U}(x),\partial U(x))$ is again log regular; 
the locus of triviality of this log structure is $U(x)$.

The following lemma establishes a ``relative'' version of this localisation procedure; we are not aware of a published account of this basic statement.

\begin{lemma} \label{localisation} Let $f: (X,\mathcal{M}_X) \to (Y,\mathcal{M}_Y)$ be a log smooth morphism of log regular schemes. Let $x \in F(X)$ and let $y = F(f)(x) \in F(Y)$. Then \begin{equation} \label{localized} (\overline{U}(x),\partial U(x)) \to (\overline{U}(y), \partial U(y)) \end{equation} is again a log smooth morphism of log regular schemes. \end{lemma}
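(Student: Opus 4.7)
The proof splits into verifying log regularity of both $(\overline{U}(x),\partial U(x))$ and $(\overline{U}(y),\partial U(y))$ --- which is already immediate from Kato's localisation theorem recalled in the discussion preceding Definition \ref{def:fan} --- and verifying log smoothness of the induced morphism \eqref{localized}, which is the actual content. I would focus on the latter.

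Since log smoothness can be checked \'etale-locally, I would invoke Kato's criterion (Theorem \ref{theorem:Kato}) to pass to a toric local model: \'etale-locally on $X$ and $Y$, there is a chart $\varphi:Q\to P$ for $f$ such that the induced map into $Y\times_{\Spec\ZZ[Q]}\Spec\ZZ[P]$ is \'etale, with $\ker\varphi^{\mathrm{gp}}$ and the torsion of $\mathrm{coker}\,\varphi^{\mathrm{gp}}$ finite of orders invertible on the base. In this picture, the points $x\in F(X)$ and $y\in F(Y)$ correspond to faces $F_x\subset P$ and $F_y\subset Q$, and the condition $F(f)(x)=y$ translates exactly to the face relation $F_y=\varphi^{-1}(F_x)$. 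Kato's structure theorem for log regular schemes identifies $\overline{U}(x)$ together with its divisorial log structure $\partial U(x)$ with the affine toric variety $\Spec\ZZ[F_x]$ equipped with its standard toric log structure (chart $F_x$), and similarly for $y$. The restricted monoid homomorphism $\varphi|_{F_y}:F_y\to F_x$ then furnishes a candidate chart for \eqref{localized}, and the \'etale part of the local model for $f$ restricts to the \'etale part for the induced morphism.

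It remains to apply Kato's criterion to this restricted chart. The kernel part is automatic: the face inclusions $F_y^{\mathrm{gp}}\hookrightarrow Q^{\mathrm{gp}}$ and $F_x^{\mathrm{gp}}\hookrightarrow P^{\mathrm{gp}}$ are injective, so $\ker(\varphi|_{F_y})^{\mathrm{gp}}$ embeds into $\ker\varphi^{\mathrm{gp}}$ and inherits finiteness with invertible order. The cokernel is the main obstacle: applying the snake lemma to the comparison of short exact sequences $0\to F_y^{\mathrm{gp}}\to Q^{\mathrm{gp}}\to Q^{\mathrm{gp}}/F_y^{\mathrm{gp}}\to 0$ and $0\to F_x^{\mathrm{gp}}\to P^{\mathrm{gp}}\to P^{\mathrm{gp}}/F_x^{\mathrm{gp}}\to 0$ exhibits $\mathrm{coker}(\varphi|_{F_y})^{\mathrm{gp}}$ as an extension of a subgroup of $\mathrm{coker}\,\varphi^{\mathrm{gp}}$ by a quotient of $\varphi^{-1}(F_x^{\mathrm{gp}})/F_y^{\mathrm{gp}}$. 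A standard saturation argument, invoking the face relation $F_y=\varphi^{-1}(F_x)$ together with the saturation of $P$ and $Q$, shows that this last group has torsion of invertible order; this is the technical heart of the proof. Once this is in place, Kato's criterion applies to the restricted chart and yields log smoothness of \eqref{localized}.
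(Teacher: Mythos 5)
Your argument is correct and follows essentially the same route as the paper's proof: apply Kato's criterion to a chart $\varphi\colon Q\to P$ for $f$ (with $Q$ chosen toric, which disposes of the kernel condition), restrict the chart to the faces $F_y=Q\setminus\mathfrak{q}$ and $F_x=P\setminus\mathfrak{p}$ determined by $y$ and $x$, obtain classical smoothness of the restricted local model by base change, and control the torsion of $\mathrm{coker}\,\widetilde{\varphi}^{\mathrm{gp}}$ by a snake-lemma computation whose decisive input is that the quotient of $Q^{\mathrm{gp}}$ by the group envelope of a face is torsion free. The only differences are cosmetic: the paper feeds the snake lemma the transposed diagram, with rows $0\to F_y^{\mathrm{gp}}\to F_x^{\mathrm{gp}}\to\mathrm{coker}\,\widetilde{\varphi}^{\mathrm{gp}}\to0$ and $0\to Q^{\mathrm{gp}}\to P^{\mathrm{gp}}\to\mathrm{coker}\,\varphi^{\mathrm{gp}}\to0$, so that your ``technical heart'' becomes the one-line observation that $\mathrm{coker}\bigl(F_y^{\mathrm{gp}}\to Q^{\mathrm{gp}}\bigr)$ is torsion free for a face of a toric monoid, whereas in your set-up one should add the small remark that the finite group $\ker\varphi^{\mathrm{gp}}$ has trivial image in the torsion-free group $(\varphi^{\mathrm{gp}})^{-1}(F_x^{\mathrm{gp}})/F_y^{\mathrm{gp}}$, so the ``quotient'' in your extension is this torsion-free group itself and the torsion of $\mathrm{coker}\,\widetilde{\varphi}^{\mathrm{gp}}$ injects into that of $\mathrm{coker}\,\varphi^{\mathrm{gp}}$.
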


\begin{proof} The log regularity is taken care of by \cite[Proposition 7.2]{Kato2}; what we really need to prove is log smoothness. To do so, we will use Kato's criterion (Theorem \ref{theorem:Kato}).

Choose an affine open $W = \mathrm{Spec}\,B$ of $Y$ containing $y$ and a compatible affine scheme $V = \mathrm{Spec}\,A$, equipped with an \'etale map $g: V \to X$ such that $g(V)$ contains $x$, on which there exists a chart for $(V, g^* \mathcal{M}_X) \to (W,\mathcal{M}_W)$ given by homomorphisms of fs monoids $$P \to A,\ Q \to B\ \text{ and } \varphi: Q \to P$$ compatible with the homomorphism of rings $B \to A$. We can assume that the monoid $Q$ is toric, i.e.~that $Q^{\mathrm{gp}}$ is torsion free \cite[Lemma 1.6]{Kato2}. Kato's criterion says that we can then choose $P$ such that the following conditions are satisfied: \begin{itemize}
\item[(1)] the induced morphism $V \to W \times_{\mathrm{Spec}\ \mathbf{Z}[Q]} \mathrm{Spec}\ \mathbf{Z}[P]$ is classically smooth, and
\item[(2)] the induced homomorphism of abelian groups $\varphi^\mathrm{gp}: Q^\mathrm{gp} \to P^\mathrm{gp}$ is injective, and the torsion part of the cokernel has order invertible on $V$.
\end{itemize}

Localising further if needed, we may assume that $x$ has a unique preimage $v$ under $g$. Now $v$ corresponds to an ideal $\mathfrak{p} \in \mathrm{Spec}\, P$, and $y$ corresponds to $\mathfrak{q} = \varphi^{-1}(\mathfrak{p}) \in \mathrm{Spec}\, Q$. Moreover, $g^{-1}(\overline{U}(x)) = \mathrm{Spec}\ A/(\mathfrak{p})$, with log structure given by $P \setminus \mathfrak{p} \to A/(\mathfrak{p})$ induced by the map $P \to A$. Here $(\mathfrak{p})$ is the ideal of $A$ generated by the elements of $\mathfrak{p}$ via $P \to A$. Similarly, $\overline{U}(y) \cap W = \mathrm{Spec}\ B/(\mathfrak{q})$, with log structure given by $Q \setminus \mathfrak{q} \to B/(\mathfrak{q})$. We have an induced morphism $\widetilde{\varphi}: Q \setminus \mathfrak{q} \to P \setminus \mathfrak{p}$ giving a chart for the morphism $$\left(g^{-1}(\overline{U}(x)),g^{-1}(\partial \overline{U}(x))\right) \to (\overline{U}(y) \cap W, \partial \overline{U}(y) \cap W).$$ It suffices to check that this chart satisfies Kato's criterion, i.e.~that

\begin{itemize}
\item[(1$'$)] the induced morphism $$\mathrm{Spec}\,A/(\mathfrak{p}) \to \mathrm{Spec}\,B/(\mathfrak{q}) \times_{\mathrm{Spec}\ \mathbf{Z}[Q \setminus \mathfrak{q}]} \mathrm{Spec}\ \mathbf{Z}[P \setminus \mathfrak{p}]$$ is classically smooth;
\item[(2$'$)] the induced homomorphism of abelian groups $\widetilde{\varphi}^\mathrm{gp}:(Q \setminus \mathfrak{q})^\mathrm{gp} \to (P \setminus \mathfrak{p})^\mathrm{gp}$ is injective, and the torsion part of the cokernel has order invertible on $g^{-1}(\overline{U}(x))$.
\end{itemize}

Now (1$'$) follows immediately from the fact that smooth morphisms are stable under base change, since $V \to W \times_{\mathrm{Spec}\ \mathbf{Z}[Q]} \mathrm{Spec}\ \mathbf{Z}[P]$ is already smooth. 

Concerning (2$'$), the injectivity of $\widetilde{\varphi}^\mathrm{gp}$ is trivial.  Consider the commutative diagram of short exact sequences

$$
\xymatrix{
0\ar[r]& (Q \setminus \mathfrak{q})^\mathrm{gp}  \ar[r] \ar@{^{(}->}[d]&
(P \setminus \mathfrak{p})^\mathrm{gp} \ar[r]\ar@{^{(}->}[d]&
\mathrm{coker}\ \widetilde{\varphi}^\mathrm{gp} \ar[r]\ar[d]&0\\
0\ar[r]& Q^\mathrm{gp}  \ar[r] & P^\mathrm{gp} \ar[r]&
\mathrm{coker}\ \varphi^\mathrm{gp} \ar[r]&0}$$
The fact that the order of the torsion part of the cokernel of $\widetilde{\varphi}^\mathrm{gp}$ is invertible on $g^{-1}(\overline{U}(x))$ follows from the same statement for the cokernel of $\varphi^\mathrm{gp}$, together with the snake lemma and the observation that the cokernel of $(Q \setminus \mathfrak{q})^\mathrm{gp} \to Q^\mathrm{gp}$ is torsion free -- indeed, the fs monoid $Q$ is toric, and $Q \setminus \mathfrak{q}$ is one of its faces. This finishes the proof. \end{proof}

The following lemma concerns the fibres of log smooth families over a base with trivial log structure. In this setting, the functor $\Irr_{X/Y}$  from \S\ref{sec:Irr} becomes rather simple.

\begin{lemma} \label{stein} Let $f: (X,\mathcal{M}_X) \to (Y,\OO_Y^*)$ be a proper, log smooth morphism of log regular schemes, where $Y$ is given the trivial log structure.
Then $f:X\to Y$ is flat, and $\Irr_{X/Y}$ is represented by a finite \'etale scheme over $Y$.  \end{lemma}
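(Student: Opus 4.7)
The plan is to exploit Kato's criterion (Theorem \ref{theorem:Kato}) to obtain an étale-local model of $f$, then form the Stein factorisation of $f$ and show that it represents $\Irr_{X/Y}$.

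First I would establish flatness. Since $\mathcal{M}_Y = \mathcal{O}_Y^*$, in Kato's criterion we may take the chart on $Y$ to be the trivial monoid. Hence étale-locally on $X$ the morphism $f$ factors as an étale map into $Y \times_{\Spec \mathbf{Z}} \Spec \mathbf{Z}[P]$ for some fs monoid $P$, followed by the projection to $Y$. The ring $\mathbf{Z}[P]$ is $\mathbf{Z}$-free on the underlying set of $P$, so this projection is flat; together with étaleness of the first factor, this yields flatness of $f$.

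Next I would analyse the geometric fibres. For a geometric point $\bar y \to Y$ and $x \in X_{\bar y}$, the local model gives an étale neighbourhood of $x$ in $X_{\bar y}$ of the form $\Spec \overline{k(y)}[P]$ for some fs monoid $P$. Since $P$ is integral, $\overline{k(y)}[P]$ is a domain, so this étale neighbourhood is integral. Thus $X_{\bar y}$ is étale-locally integral; in particular $X_{\bar y}$ is (geometrically) reduced, and its irreducible components are pairwise disjoint open subschemes, so they coincide with its connected components.

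Finally I would form the Stein factorisation $X \to Y' \to Y$ with $Y' := \Spec_Y f_*\mathcal{O}_X$. By properness, flatness, and cohomology and base change, $f_*\mathcal{O}_X$ is locally free of finite rank and its formation commutes with base change; combined with the previous paragraph, for each geometric point $\bar y$ the fibre $H^0(X_{\bar y}, \mathcal{O}_{X_{\bar y}})$ is a finite product of copies of $\overline{k(y)}$, one per connected component. Hence $Y' \to Y$ is finite étale, and its geometric fibres are canonically in bijection with the irreducible components of $X_{\bar y}$. The delicate step — and the real content of the lemma — is then to identify the scheme $Y'$ with the functor $\Irr_{X/Y}$ of \cite[Définition 2.1.1]{Rom11}. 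For this I would appeal to \cite[Lemmes 2.1.2 et 2.1.3]{Rom11}, which present $\Irr_{X/Y}$ as an étale sheaf on $Y$; the clopen decomposition of $X$ induced by the finite étale cover $Y' \to Y$ provides étale-locally defined sections of $\Irr_{X/Y}$ which, by the fibrewise bijection above, exhaust all geometric points and glue to the desired isomorphism $Y' \cong \Irr_{X/Y}$.
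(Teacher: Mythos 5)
Your overall architecture (flatness, then geometric reducedness of fibres with irreducible components equal to connected components, then Stein factorisation via \cite[Tag 0BUN]{Stacksproject}-type arguments and identification with $\Irr_{X/Y}$ as an \'etale sheaf) is the same as the paper's, and your flatness argument via charts with $Q=0$ is a valid alternative to the paper's citation of Kato's results that a log smooth morphism to a trivial log base is integral, hence flat. However, the paragraph analysing the geometric fibres contains a genuine flaw, in two places. First, ``since $P$ is integral, $\overline{k(y)}[P]$ is a domain'' is false in general: an fs monoid may contain torsion units (e.g.\ $P=\NN\oplus\ZZ/2$ is fs), and then $\overline{k(y)}[P]$ is not a domain; integrality of the monoid only gives that $P$ embeds in $P^{\mathrm{gp}}$, and $k[P^{\mathrm{gp}}]$ is a domain only when $P^{\mathrm{gp}}$ is torsion free. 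Second, and independently, Kato's criterion does not give an \'etale neighbourhood of $x$ in $X_{\bar y}$ \emph{of the form} $\Spec \overline{k(y)}[P]$; it gives an \'etale (or smooth) morphism \emph{from} such a neighbourhood \emph{to} $\Spec \overline{k(y)}[P]$, and a scheme \'etale over an integral scheme need not be integral (a disjoint union of two copies of an open subscheme is a counterexample). So the inference ``this \'etale neighbourhood is integral'' does not follow, and with it the claim that $X_{\bar y}$ is \'etale-locally integral.

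The conclusion you want is nevertheless true, and the correct invariant to propagate is normality rather than integrality: after splitting $P\cong P^{\sharp}\times P^{*}$ and using that the torsion of $P^{\mathrm{gp}}$ has order invertible on the chart (which Kato's criterion guarantees), $\overline{k(y)}[P]$ is a finite product of normal toric rings, hence normal, and a scheme smooth over a normal scheme is normal; thus the fibres are geometrically normal, in particular geometrically reduced, and for a normal scheme the connected components coincide with the irreducible components. This is exactly the route the paper takes, in a cleaner form: the fibres are log smooth over a field with trivial log structure, hence log regular by \cite[Theorem~8.2]{Kato2}, hence normal, and geometrically so since log smoothness is stable under base change. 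Note also that your appeal to ``cohomology and base change'' to get $f_*\OO_X$ locally free silently uses local constancy of $\dim H^0(X_{\bar y},\OO_{X_{\bar y}})$, which is where the geometric reducedness of the fibres enters; citing \cite[Tag 0BUN]{Stacksproject} directly, as the paper does, packages this correctly.
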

\begin{proof} 
Since the log structure on $Y$ is trivial, $f$ is integral \cite[Corollary 4.4(ii)]{Kato1}. Log smooth, integral morphisms are flat \cite[Corollary 4.5]{Kato1}, hence $f$ is flat.

The fibres of $f$ are log regular, being log smooth over a field with trivial log structure \cite[Theorem~8.2]{Kato2}, and hence normal. Moreover, as log smooth morphisms are preserved by base change, the fibres are geometrically normal, hence geometrically reduced.

Let $X \to \mathrm{St}_{X/Y} \to Y$ be the Stein factorisation of $X \to Y$. As $f$ is flat and the fibres are geometrically reduced, it follows from \cite[Tag 0BUN]{Stacksproject} that $\mathrm{St}_{X/Y}$ is finite \'{e}tale over $Y$. The result now follows from the fact that $\Irr_{X/Y}$ and $\mathrm{St}_{X/Y}$ are isomorphic under the assumptions of the lemma. Indeed, the fibres of $f$ are normal, so the connected components of the geometric fibres are precisely the irreducible components.
\end{proof}

Another important property of proper, log smooth morphisms is that the multiplicities of the fibres are ``constant along strata''. Such a result is well-known (and easy to check) for toric morphisms, and readily extends to the log smooth setting.

\begin{proposition} \label{proposition:constancy} Let $f: (X,\mathcal{M}_X) \to (Y,\mathcal{M}_Y)$ be a proper, log smooth morphism of log regular schemes. Let $y' \in Y$, and let $y \in F(Y)$ be the unique point with the property that $y' \in U(y)$. Let $x'$ be one of the generic points of the fibre $f^{-1}(y')$, and let $x \in F(X)$ be the unique point such that $x' \in U(x)$.

Then $f(x) = y$ and the multiplicity of $x$ in $f^{-1}(y)$ is equal to the multiplicity of $x'$ in $f^{-1}(y')$, that is, the lengths of the Artinian local rings $\mathcal{O}_{f^{-1}(y),x}$ and $\mathcal{O}_{f^{-1}(y'),x'}$ are equal. \end{proposition}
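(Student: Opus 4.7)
The plan is to first verify the set-theoretic claim $f(x)=y$ and then to reduce the multiplicity equality to a toric computation via Kato's criterion. Since $f$ is a morphism of log schemes, the characteristic morphisms to the fans fit into a commutative square $\pi_Y\circ f=F(f)\circ\pi_X$. Applying this to $x'$ and using $\pi_X(x')=x$ together with $\pi_Y(y')=y$ gives $F(f)(x)=y$; combined with the fact that the distinguished points $x\in F(X)\subset X$ and $y\in F(Y)\subset Y$ are compatible with $f$, this yields $f(x)=y$ as a schematic equality.

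For the multiplicity equality I would work étale-locally on $X$ around $x'$ and invoke Kato's criterion (Theorem~\ref{theorem:Kato}) to produce fs monoid charts $Q\to\mathcal{O}_Y$ and $P\to\mathcal{O}_X$, together with a monoid map $\varphi\colon Q\to P$ giving a chart for $f$, such that the resulting morphism
\[
V\longrightarrow Z:=Y\times_{\Spec\ZZ[Q]}\Spec\ZZ[P]
\]
is classically smooth, where $V$ is a suitable étale neighbourhood of $x'$. A classically smooth morphism preserves the length of the local ring at a generic point of a fibre: indeed, if $A\to B$ is a flat local homomorphism with $\mathfrak{m}_AB=\mathfrak{m}_B$, lifting the composition series of $A$ through flatness shows $\mathrm{length}_B(B)=\mathrm{length}_A(A)$. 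Hence it suffices to verify the constancy of the multiplicities for the base-changed toric morphism $\mathrm{pr}_Y\colon Z\to Y$.

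For $\mathrm{pr}_Y\colon Z\to Y$ the fibre over $y''\in Y$ is $\Spec(\kappa(y'')\otimes_{\ZZ[Q]}\ZZ[P])$. The condition $y'\in U(y)$ is equivalent to the statement that the composition $Q\to\mathcal{O}_Y\to\kappa(y')$ sends the prime $\mathfrak{q}$ determining $y$ to zero and the complementary face $F=Q\setminus\mathfrak{q}$ into $\kappa(y')^\times$. Consequently
\[
\kappa(y')\otimes_{\ZZ[Q]}\ZZ[P]=\kappa(y')\otimes_{\ZZ[F^{\mathrm{gp}}]}R,\qquad R:=\bigl(\ZZ[P]\otimes_{\ZZ[F]}\ZZ[F^{\mathrm{gp}}]\bigr)\big/(\varphi(\mathfrak{q})),
\]
where $R$ is a fixed $\ZZ[F^{\mathrm{gp}}]$-algebra independent of $y'$. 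The generic point of $Z_{y'}$ lying above $x$ corresponds to a fixed minimal prime of $R$; localising $R$ at this prime yields an Artinian $\ZZ[F^{\mathrm{gp}}]$-algebra of some length $\ell$, and since tensoring with a field over $\ZZ[F^{\mathrm{gp}}]$ preserves length, the multiplicity at the corresponding point of $Z_{y'}$ equals $\ell$ for every $y'\in U(y)$, including $y'=y$.

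The main obstacle I foresee is this last toric computation: one has to identify the minimal prime of $R$ corresponding to the stratum $U(x)$ and verify that its localisation is Artinian of length independent of $y'$. This encodes the classical homogeneity of toric morphisms along their orbits, but requires careful bookkeeping with the face structure of $P$ above $\mathfrak{q}\subset Q$, together with a check that the combinatorial data are unchanged as one varies $(x',y')$ within the respective strata.
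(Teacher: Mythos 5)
Your overall strategy coincides with the paper's: choose a chart via Kato's criterion (Theorem~\ref{theorem:Kato}), use that the \'etale/smooth factor preserves the lengths of the local rings at generic points of the fibres, and thereby reduce to the base-changed toric morphism, for which the multiplicity should depend only on the strata containing the point and its image. Your reduction is correct, and the identification of the fibre over $y'$ with $\Spec\bigl(\kappa(y')\otimes_{\ZZ[F^{\mathrm{gp}}]}R\bigr)$, with $R$ independent of $y'$, is a nice explicit form of it.

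The genuine gap is in the final step, and the justification you give there is not valid: ``tensoring with a field over $\ZZ[F^{\mathrm{gp}}]$ preserves length'' is false as a general principle. Passing from $R$ to a fibre of $\Spec R\to\Spec\ZZ[F^{\mathrm{gp}}]$ can change lengths at minimal primes, and a minimal prime of the fibre need not even lie over a minimal prime of $R$: for $R=k[t^{\pm1}][x]/(x^2-(t-1))$ over $k[t^{\pm1}]$ the total space is integral (length $1$ at its minimal prime), while the fibre at $t=1$ is $k[x]/(x^2)$, of length $2$, and its unique point corresponds to the non-minimal prime $(x,t-1)$ of $R$. What rescues the toric situation is precisely the structure you have not yet invoked: $\Spec\ZZ[P]\to\Spec\ZZ[Q]$ is equivariant for $T_P\to T_Q$, and the chart may be chosen (as in the proof of Lemma~\ref{localisation}) with $\varphi^{\mathrm{gp}}$ injective and the torsion of its cokernel invertible, so that geometric points of the same $T_Q$-orbit differ by translation by a geometric point of $T_P$, identifying the corresponding fibres; alternatively one computes the length along the stratum of $\mathfrak p$ purely monomially and checks it is determined by $(\varphi,\mathfrak p,\mathfrak q)$ alone, independently of the chosen field point of the orbit. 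This constancy is the entire content of the proposition beyond the formal chart reduction (it is the statement the paper labels as classical for toric morphisms), so it must be proved, not asserted; you rightly flag it as the main obstacle, but as written the step would fail. A smaller point of the same nature: your argument for $f(x)=y$ only yields $F(f)(x)=y$, i.e.\ $f(x)\in U(y)$; the equality of scheme-theoretic points amounts to dominance of $U(x)\to U(y)$, which again comes out of the local toric model (surjectivity of the induced map of orbits) rather than from compatibility of the characteristic morphisms alone.
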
 

\begin{proof} Choose a chart for $f$ around $x'$. This involves choosing

\begin{itemize}
\item a Zariski open $j:V \hookrightarrow Y$ such that $y'\in V$ together with a chart 
$c_V: V \to \mathrm{Spec}\,\mathbf{Z}[Q]$ for $j^* \mathcal{M}_Y$,
\item an \'etale morphism $h: U \to X$ such that $x'\in h(U)$ and $(f\circ h)(U)\subset V$
together with a chart $c_U: U \to \mathrm{Spec}\,\mathbf{Z}[P]$ for $h^* \mathcal{M}_X$,
\item a  homomorphism $\varphi: Q \to P$ compatible with $f \circ h: U \to V$, $c_U$ and $c_V$,
\end{itemize}
such that the induced map $$\psi: U \to V \times_{\mathrm{Spec}\,\mathbf{Z}[Q]} \mathrm{Spec}\,\mathbf{Z}[P]$$ is \'etale. Let \begin{eqnarray*} \pi_1: V \times_{\mathrm{Spec}\,\mathbf{Z}[Q]} \mathrm{Spec}\,\mathbf{Z}[P] & \to & V \\ \pi_2: V \times_{\mathrm{Spec}\,\mathbf{Z}[Q]} \mathrm{Spec}\,\mathbf{Z}[P] & \to & \mathrm{Spec}\,\mathbf{Z}[P]\end{eqnarray*} be the natural projections.
Choose a point $z'\in h^{-1}(x')$. 

Since $h$ and $\psi$ are \'etale, the lengths of the Artinian local rings $$\mathcal{O}_{f^{-1}(y'),x'},\ \mathcal{O}_{(f \circ h)^{-1}(y'),z'}\ \textrm{ and }\ \mathcal{O}_{\pi_1^{-1}(y'),\psi(z')}$$ are equal. Since the projection $\pi_1$ is obtained by base change from the toric morphism $\mathrm{Spec}\,\mathbf{Z}[\varphi]: \mathrm{Spec}\,\mathbf{Z}[P] \to \mathrm{Spec}\,\mathbf{Z}[Q]$, the length of the local ring $\mathcal{O}_{\pi_1^{-1}(y'),\psi(z')}$ 
only depends on the toric strata of $\mathrm{Spec}\,\mathbf{Z}[P]$ and $\mathrm{Spec}\,\mathbf{Z}[Q]$ which contain $\pi_2(\psi(z')) = c_U(z')$ and $c_V(y') = (\mathrm{Spec}\,\mathbf{Z}[\varphi])(c_U(z'))$, respectively, i.e. on the prime ideals of $P$ and $Q$ which correspond to $x$ and to $y$, respectively. This gives the desired result. \end{proof}

We will need the following elementary lemma.

\begin{lemma} \label{lem:remove_Z}
	Let $f: X \to Y$ be a morphism of schemes of finite presentation. 
	Let $Z \subset X$ be a closed
	subset such that $Z|_{f^{-1}(y)}$ contains no irreducible component of $f^{-1}(y)$,
	for all points $y \in Y$. Then the natural morphism of functors
	$$\Irr_{(X \setminus Z)/Y} \to \Irr_{X/Y}$$
	is an isomorphism.
\end{lemma}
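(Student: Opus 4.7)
The plan is to reduce the claim to a verification on stalks at geometric points. By \cite[Lemme~2.1.2]{Rom11}, both $\Irr_{(X \setminus Z)/Y}$ and $\Irr_{X/Y}$ are étale sheaves on $Y$, so it suffices to show that the natural map is a bijection after pulling back to any geometric point $\bar y \to Y$. By \cite[Proposition~2.1.4]{Rom11} (together with the construction of the functor), the value of $\Irr_{X/Y}$ at $\bar y$ is naturally in bijection with the set of irreducible components of the geometric fibre $X_{\bar y}$, and similarly for $\Irr_{(X \setminus Z)/Y}$ and $X_{\bar y} \setminus Z_{\bar y}$; moreover the morphism of functors in the statement is induced, on stalks, by the inclusion $X_{\bar y} \setminus Z_{\bar y} \hookrightarrow X_{\bar y}$.

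The key step I would then carry out is to check that the hypothesis descends to the geometric fibre over $\bar y$: writing $y$ for the image of $\bar y$ in $Y$, I claim that no irreducible component of $X_{\bar y}$ is contained in $Z_{\bar y}$. Indeed, any such component $C$ lies over a unique irreducible component $C'$ of $X_y$ (under the faithfully flat base change $X_{\bar y} \to X_y$), and since $f$ is of finite presentation the schemes $C$ and $C'$ are of finite type over their respective residue fields with $\dim C = \dim C'$. By hypothesis $Z_y$ contains no irreducible component of $X_y$, hence $\dim(Z_y \cap C') < \dim C'$; base change to $\bar y$ preserves dimension for finite type schemes, so $\dim(Z_{\bar y} \cap C) < \dim C$, and therefore $C \not\subseteq Z_{\bar y}$.

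With this in hand, the bijection of irreducible components follows from the standard fact that the irreducible components of an open subscheme $U \subseteq W$ are precisely the non-empty intersections $C \cap U$, where $C$ runs over the irreducible components of $W$. Applied to $U = X_{\bar y} \setminus Z_{\bar y} \subseteq W = X_{\bar y}$, the previous paragraph shows that $C \cap U = C \setminus Z_{\bar y}$ is a non-empty open (hence dense and irreducible) subset of $C$ for every component $C$ of $X_{\bar y}$, so the assignment $C \mapsto C \setminus Z_{\bar y}$ is a bijection between the two sets of irreducible components. This is precisely the bijection induced on stalks by the inclusion $X \setminus Z \hookrightarrow X$, which concludes the proof.

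I expect no serious obstacle in this argument; the only point that requires a bit of care is the descent of the fibrewise hypothesis to geometric fibres, which rests on the preservation of dimension under field extensions for schemes of finite type (itself guaranteed by the finite presentation assumption on $f$).
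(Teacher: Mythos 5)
Your fibrewise core is the right substance, but the opening reduction is a genuine gap. You assert that since both $\Irr_{(X\setminus Z)/Y}$ and $\Irr_{X/Y}$ are \'etale sheaves, it suffices to check the map after pulling back to geometric points $\bar y \to Y$. These are functors on the whole category of $Y$-schemes, and a morphism of such (big-site) \'etale sheaves is \emph{not} detected on geometric points: for instance, the map of representable sheaves attached to a non-trivial universal homeomorphism (say $\Spec \FF_p(t^{1/p}) \to \Spec \FF_p(t)$, or $\Spec k[x]/(x^2) \to \Spec k$) is bijective on points valued in algebraically closed fields without being an isomorphism of functors. The criterion you invoke is valid for sheaves on the small \'etale site via stalks (which are colimits over \'etale neighbourhoods, not values at geometric points --- a second conflation in your argument), or for functors already known to be representable by schemes or algebraic spaces \'etale over $Y$; but no representability is assumed in the lemma, and assuming it would be circular in its intended application (Proposition \ref{5.16}), where representability of $\Irr_{U(x)/U(y)}$ is \emph{deduced} using this lemma together with Lemma \ref{stein}.

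The repair is to argue on $T$-points for an arbitrary $Y$-scheme $T$, which is what the paper's one-line proof implicitly does: by \cite[D\'efinition~2.1.1]{Rom11} a section of $\Irr_{X/Y}$ over $T$ is an open subscheme of $X_T$ subject to conditions imposed fibre by fibre on $X_T \to T$, so it suffices to know that for every $t \in T$ the closed set $Z_t$ contains no irreducible component of $X_t$; then removing $Z_T$ changes neither the fibres' irreducible components nor the open subschemes parametrised, and the sections over $T$ are unchanged. Your dimension argument (components of $X_y \otimes_{\kappa(y)} \kappa(t)$ lie over components of $X_y$ of the same dimension, and dimension of finite type schemes is preserved under arbitrary field extension) is exactly what is needed to propagate the hypothesis from the fibres of $X \to Y$ to the fibres of $X_T \to T$, and your final step identifying components of an open subscheme with the components it meets is correct; applied over every $T$ rather than only over geometric points of $Y$, these two ingredients give the statement, in agreement with the paper's proof and \cite[Corollaire~2.6.2]{Rom11}.
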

\begin{proof}
	Our assumptions imply that removing $Z$ does not change the open irreducible
	components of the fibres; the result follows. (See also \cite[Corollary 2.6.2]{Rom11}.)
\end{proof}

From the previous results we will now deduce the final statement of this section.

\begin{proposition} \label{5.16} Let $f: (X,\mathcal{M}_X) \to (Y,\mathcal{M}_Y)$ be a proper, log smooth morphism of log regular schemes. For each point $y \in F(Y)$,   the functors $$\Irr_{f^{-1}(U(y))/U(y)}\ \textrm{ and }\ \Irr^1_{f^{-1}(U(y))/U(y)}$$ are representable by finite \'etale schemes over $U(y)$. \end{proposition}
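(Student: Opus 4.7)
The plan is to reduce to Lemma \ref{stein} by working stratum-by-stratum. Throughout, we write $W = f^{-1}(U(y))$. By the compatibility of $\pi_X, \pi_Y$ with $F(f)$, as sets we have
\[
W = \bigcup_{x \in F(f)^{-1}(y)} U(x),
\]
and Proposition \ref{proposition:constancy} guarantees that the combinatorial type (and in particular the multiplicity) of a stratum component of a fibre depends only on the corresponding $x \in F(X)$. Let $M_y \subset F(f)^{-1}(y)$ denote the set of points which are maximal for the generisation order on $F(X)$, i.e.~which correspond to minimal primes among those in $F(f)^{-1}(y)$. The key combinatorial observation is that for $x \in M_y$ the stratum $U(x)$ is open in $W$: its complement $\bigcup_{x' \in F(f)^{-1}(y),\, x' \neq x} U(x')$ is closed in $W$ precisely because no $x' \neq x$ in $F(f)^{-1}(y)$ has $x$ in $\overline{\{x'\}}$, by maximality. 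Hence $W' := \bigsqcup_{x \in M_y} U(x)$ is an open subscheme of $W$.

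First I would apply Lemma \ref{localisation} to each $x \in M_y$ to obtain a log smooth morphism $(\overline{U}(x),\partial U(x)) \to (\overline{U}(y),\partial U(y))$ of log regular schemes. This is proper since it is cut out from the proper morphism $f$ by a closed immersion into a closed subscheme. The open immersion $U(y) \hookrightarrow \overline{U}(y)$ is strict (both sides carry the trivial log structure on $U(y)$, which is the triviality locus of the divisorial structure from $\partial U(y)$). Strict base change preserves properness and log smoothness, so one obtains a proper log smooth morphism
\[
g_x \colon Z_x := \overline{U}(x) \times_{\overline{U}(y)} U(y) \longrightarrow U(y),
\]
with $U(y)$ equipped with the trivial log structure. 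Lemma \ref{stein} now applies and yields that $\Irr_{Z_x/U(y)}$ is representable by a finite \'etale scheme over $U(y)$.

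Next I would assemble these pieces. Because $x$ is the unique generic point of $\overline{\{x\}} \cap F(f)^{-1}(y)$, the open stratum $U(x)$ is open and dense in $Z_x$, and the complement $Z_x \setminus U(x)$ consists of strictly lower-dimensional strata which do not meet any generic point of any geometric fibre component of $g_x$. Lemma \ref{lem:remove_Z} then gives $\Irr_{U(x)/U(y)} \cong \Irr_{Z_x/U(y)}$. Arguing similarly on the level of $W$: the complement $W \setminus W'$ is the union of strata $U(x')$ with $x' \notin M_y$, each a specialisation of some $x \in M_y$, hence of strictly smaller fibre dimension; in particular it contains no generic point of any component of any fibre. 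Lemma \ref{lem:remove_Z} once more gives
\[
\Irr_{W/U(y)} \;\cong\; \Irr_{W'/U(y)} \;=\; \bigsqcup_{x \in M_y} \Irr_{U(x)/U(y)} \;\cong\; \bigsqcup_{x \in M_y} \Irr_{Z_x/U(y)},
\]
which is finite \'etale over $U(y)$. For $\Irr^1_{W/U(y)}$ one restricts to the subset $M_y^1 \subset M_y$ of strata whose multiplicity in $f^{-1}(y)$ equals $1$; by Proposition \ref{proposition:constancy} this condition is intrinsic to $x$, and the generic point of such a stratum is smooth over $U(y)$, which gives the corresponding identification with $\bigsqcup_{x \in M_y^1} \Irr_{Z_x/U(y)}$.

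The main obstacle I anticipate is not the application of the log-geometric inputs, which are by now essentially formal, but the combinatorial bookkeeping: verifying carefully that $W' \subset W$ is open, that it meets every geometric irreducible component of every fibre of $W \to U(y)$ in a dense open, and that generic points of fibre components of $f$ genuinely lie in $\bigsqcup_{x \in M_y} U(x)$ (rather than in some deeper stratum). This last point uses that for a (proper) log smooth morphism the fibres are equidimensional on each stratum with dimension determined by the fan, so generic points of fibre components must sit in the most generic preimage strata $U(x)$ with $x \in M_y$.
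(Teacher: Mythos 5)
Your plan is correct and follows essentially the same route as the paper's proof: stratify $f^{-1}(U(y))$ by the cone points over $y$, apply Lemma \ref{localisation} and Lemma \ref{stein} to each $\overline{U}(x)\times_{\overline{U}(y)}U(y)$, and use Lemma \ref{lem:remove_Z} together with Proposition \ref{proposition:constancy} to pass to the open union of the most generic strata and to isolate the multiplicity-one part. The ``main obstacle'' you defer is handled in the paper exactly as you anticipate: flatness of each $f_x$ (from Lemma \ref{stein}) plus properness over the irreducible base gives fibres of pure dimension $\dim U(x)-\dim U(y)$, so the lower strata meet no fibre component and Lemma \ref{lem:remove_Z} applies.
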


\begin{proof} The set $f^{-1}(U(y))$ is the disjoint union of locally closed subsets $U(x)$,
where $x\in F(X)$ is such that $f(x)=y$. For each such $x$
consider the induced morphism $$(\overline{U}(x),\partial U(x)) \to (\overline{U}(y),\partial U(y)).$$ 
By Lemma \ref{localisation}, this is again a proper and log smooth morphism of log regular schemes.
Let $f_x$ be the restriction of this morphism to $U(y)$ equipped with the trivial log structure. 
The scheme $\overline{U}(x) \times_{\overline{U}(y)} U(y)$ has a natural log structure, obtained from the divisorial log structure on $(\overline{U}(x),\partial U(x))$ by restriction, making
$$f_x:\overline{U}(x) \times_{\overline{U}(y)} U(y) \to U(y)$$ 
a proper and log smooth morphism of log regular schemes. The log structure on $U(y)$ is trivial, so
by Lemma \ref{stein} the morphism $f_x$ is flat.
The fibres of a proper flat morphism of irreducible varieties have the same pure dimension; therefore all fibres of $f_x$ have pure dimension $\dim(U(x))-\dim(U(y))$. 

The stratum $U(x)$ is a dense open subset of $\overline{U}(x)\times_{\overline{U}(y)} U(y)$,
whose complement is the union of closed subsets $\overline U(x')\subset\overline{U}(x)$, where $x'\in F(X)$ is a specialisation of $x$, $f(x')=y$ and $x' \neq x$. In particular, $\dim(U(x'))<\dim(U(x))$.
Thus for any $t\in U(y)$ the intersection $f_x^{-1}(t)\cap U(x)\subset f_x^{-1}(t)$ is the complement
of the union of the closed subsets $f_x^{-1}(t)\cap\overline U(x')=f_{x'}^{-1}(t)$, with $x'$ as above.
We have seen that $f_{x'}^{-1}(t)$ has pure dimension $$\dim(U(x'))-\dim(U(y))<\dim(U(x))-\dim(U(y)),$$
hence $f_x^{-1}(t)\cap U(x)$ is a dense open subset of $f_x^{-1}(t)$. 
It now follows from Lemma \ref{lem:remove_Z} there is natural isomorphism of functors
\begin{equation}\Irr_{U(x)/U(y)}=\Irr_{\overline U(x)\times_{\overline{U}(y)} U(y)/U(y)}.
\label{13july}
\end{equation}

Now let $x_1,\ldots,x_n \in F(X)$ be the minimal elements of $f^{-1}(y)$ with respect to the partial ordering given by the topology on $F(X)$. Then $f^{-1}(U(y))$ is the union of the closed subsets $\overline{U}(x_i)\times_{\overline{U}(y)} U(y)$,
where $i=1,\ldots,n$. Hence the fibre $f^{-1}(t)$
is the union of the closed subsets $f_{x_i}^{-1}(t)$. For each $i=1,\ldots,n$, the intersection
$f_{x_i}^{-1}(t)\cap U(x_i)$ is open and dense in $f_{x_i}^{-1}(t)$, hence
$\bigcup_{i=1}^n (f_{x_i}^{-1}(t)\cap U(x_i))$ is open and dense in $f^{-1}(t)$. 
Therefore Lemma \ref{lem:remove_Z} yields a natural isomorphism of functors
$$\Irr_{f^{-1}(U(y))/U(y)}=\Irr_{\big(\bigcup_{i=1}^n U(x_i)\big)/U(y)}=
\coprod_{i=1}^n \Irr_{U(x_i)/U(y)}$$
as the $U(x_i)$ are pairwise disjoint.

By (\ref{13july})
the result for the functor $\Irr_{f^{-1}(U(y))/U(y)}$ now follows from Lemma \ref{stein}. Next, $\Irr^1_{f^{-1}(y)/y}=\Irr_{f^{-1}(y)_{\rm sm}/y}$ represents 
the irreducible components of $f^{-1}(y)$ of geometric multiplicity 1.
By Proposition \ref{proposition:constancy}, the functor $\Irr^1_{f^{-1}(U(y))/U(y)}
=\Irr_{f^{-1}(U(y))_{\rm sm}/U(y)}$ represents
the Zariski closure of its generic fibre, which is represented by $\Irr^1_{f^{-1}(y)/y}$. 
This is the union of some
of the irreducible components of the finite \'etale $U(y)$-scheme represented by
$\Irr_{f^{-1}(U(y))/U(y)}$, and hence
is also finite and \'etale over $U(y)$.
\end{proof}

\section{Surjectivity} \label{sec:surjectivity}

In this section we prove Theorem \ref{thm:CT}, and 
explain how to use this and other results from the paper to
prove Theorems \ref{maintheorem} and \ref{cor:frobenian}.

\subsection{Notation and hypotheses} Let $f: X \to Y$ be a dominant morphism of 
smooth, proper, geometrically integral varieties over 
a number field $k$ with geometrically integral generic fibre.
By Lemma~\ref{lem:Denef_obs}  and Theorem \ref{ADK}
we can assume that $f$
gives rise to a log smooth morphism $f:(X,D)\to (Y,E)$ of log regular schemes. 

Let $U = X \setminus D$ and $V = Y \setminus E$. 
By condition (3) of Theorem \ref{ADK}
we have $f^{-1}(V) = U$. Thus 
$f:U\to V$ is a smooth proper morphism with geometrically integral generic fibre.
By \cite[Corollaire 15.5.4]{EGAIV} all fibres of $f:U\to V$ are geometrically connected.
Since connected noetherian normal schemes are integral
\cite[Lemma 27.7.6, Tag 033H]{Stacksproject}, all fibres of $f:U\to V$ are
geometrically integral.

Recall that we are working with Zariski log structures; in particular,
$D$ and $E$ are strict normal crossing divisors, so their irreducible components are smooth. Let $F(f):F_X\to F_Y$ be the attached morphism of smooth Kato fans. We have the height $h_Y:F_Y(\NN)\to\NN$ introduced at the end of \S \ref{subsubsection:fans}.

Let $E_i$ be an irreducible component of $E$
and let $D_j$ be an irreducible component of $D$. Since $X$ and $Y$ are smooth,
each of the local rings $\O_{E_i,Y}$ and $\O_{D_j,X}$ is a discrete valuation ring. 
Let $\val_{E_i}:\O_{E_i,Y}\to\NN$ and $\val_{D_j}:\O_{D_j,X}\to\NN$
be the respective discrete valuations.
There is an $m_{ij}\in\NN$
such that the restriction of $\val_{D_j}$ to $\O_{E_i,Y}$ is $m_{ij}\val_{E_i}$.

Let $S$ be a finite set of primes of $k$
such that $f:X\to Y$ extends to a morphism of smooth and proper $\O_S$-schemes
$f:\sX\to\sY$. For each irreducible component $D'\subset D$ let
$k'$ be the algebraic closure of $k$ in the field of functions 
$\kappa(D')$. We assume that $S$ contains all the primes of $k$ ramified in 
any of the fields $k'$. 

Let $v$ be a prime of $k$ which is not in $S$. We 
denote by $\val_v:\O_v\setminus\{0\}\to\ZZ$ the valuation at $v$ and
by $\pi_v$ a generator of the maximal ideal of $\O_v$, so that $\F_v=\O_v/(\pi_v)$. 
We write $\sY_v=\sY\times_{\Spec\,O_S}\Spec\,O_v$
and $\sE_v=\sE\times_{\Spec\,O_S}\Spec\,O_v$.

Let $\sD\subset\sX$ and $\sE\subset\sY$ be 
the Zariski closures of $D$ and $E$ in $\sX$ and $\sY$, respectively.
By adding finitely many primes to $S$ we can assume that 
$\sE$ and $\sD$ are strict normal crossing divisors, so each of their
irreducible components is smooth over $\Spec\,\O_S$. 
We can also assume that
$(\sX,\sD)$ and $(\sY,\sE)$ are log regular schemes with
Kato fans $F_X$ and $F_Y$, respectively, and that there is a log smooth morphism
$f:(\sX,\sD)\to(\sY,\sE)$ inducing the given morphism $(X,D)\to(Y,E)$ on the generic fibres.

By the valuative criterion of properness any point 
$P\in Y(k_v)$ extends to a local section $\sP:\Spec\,\O_v\to \sY$
of the structure morphism $\sY\to \Spec\,\O_S$.
We write $P\bmod\pi_v$ for $\sP(\Spec\,\F_v)$, which is an $\F_v$-point of
the closed fibre $\sY\times_{\Spec\,O_S}\Spec\,\F_v$.

For any point $P\in V(k_v)$ the local section $\sP:\Spec\,\O_v\to \sY$ gives rise to a morphism of log schemes
$\sP:(\Spec\,\O_v)^\dagger\to (\sY,\sE),$
where $(\Spec\,\O_v)^\dagger$ is equipped with the natural (divisorial) log structure. The associated morphism of Kato fans
is $F(\sP):\Spec\,\NN\to F_Y$. It sends the closed point $\NN_{>0}$
of $\Spec\,\NN$ to $F(P \bmod\pi_v)$.

\subsection{A surjectivity criterion} The following intermediate result is an adaptation of Denef's ``surjectivity criterion"
\cite[4.2]{Denef} in the language of log geometry.

\begin{proposition} \label{p0} \label{proposition:surjectivity}
There exists an integer $m \geq 1$ such that whenever  $v\notin S$,
we have $f(X(k_v))=Y(k_v)$ if and only if $f(X(k_v))$ contains all 
$P\in V(k_v)$ with $h_Y(F(\sP))\leq m$.
\end{proposition}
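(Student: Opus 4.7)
The implication $(\Rightarrow)$ is trivial. For $(\Leftarrow)$, I first note that since $f:\sX\to\sY$ is proper, $f(X(k_v))$ is closed in $Y(k_v)$ (in the $v$-adic topology), and since $Y$ is smooth with $V\subset Y$ dense open, $V(k_v)$ is dense in $Y(k_v)$. Hence it suffices to show that the bounded-height hypothesis forces $V(k_v)\subseteq f(X(k_v))$.

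The core step uses the logarithmic Hensel's lemma (Proposition~\ref{logHensel}). Given $P\in V(k_v)$, the section $\sP$ enhances to a log morphism $(\Spec\O_v)^\dag\to(\sY,\sE)$, whose induced morphism of Kato fans $F(\sP):\Spec\NN\to F_Y$ factors through the subcone $\sigma_y=\Spec\NN^r$ corresponding to the stratum $U(y)\ni P\bmod\pi_v$; the dual map $\NN^r\to\NN$ sends the canonical generators to a non-negative integer vector $(e_1,\ldots,e_r)$ with $\sum e_i=h_Y(F(\sP))$. By log Hensel, $P\in f(X(k_v))$ if and only if $\sP|_{s^\dag}$ admits a log lift through $f$ to a morphism $s^\dag\to(\sX,\sD)$; by Kato's criterion for log smoothness, in local charts this decomposes into (a) a combinatorial lifting problem for $F(f):F_X\to F_Y$ over $\sigma_y$ depending on $(e_1,\ldots,e_r)$, and (b) the existence of a compatible $\F_v$-point in the relevant stratum of the fibre over $P\bmod\pi_v$. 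Problem (a) depends only on $(e_1,\ldots,e_r)$ modulo integers controlled by the multiplicities $m_{ij}$ of the charts, while (b) is handled using Proposition~\ref{5.16} (finite \'etale structure on $\Irr^1_{f^{-1}(U(y))/U(y)}$) combined with the Lang--Weil estimates (absorbed into $S$), which allow transport of $\F_v$-lifts between points of the same stratum.

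Since $F_Y$ is quasi-compact with finitely many cones, I would take $m$ to be the l.c.m.\ of the relevant multiplicities over all cones (with possibly an extra factor coming from the ranks of the fan matrices; here Lemma~\ref{one} could be used to first replace $f$ by a log-blow-up in which heights across high-dimensional cones are simplified, at the cost of enlarging $m$, and invoking Lemma~\ref{lem:Denef_obs} to preserve the surjectivity question). With this $m$, the solvability of (a)+(b) for an arbitrary $P\in V(k_v)$ is equivalent to the solvability for some height-$\le m$ witness $P'\in V(k_v)$ with the same reduction stratum and height vector congruent to $(e_1,\ldots,e_r)$ modulo $m$; combined with log Hensel this produces the required $k_v$-lift of $P$ from one of $P'$.

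\textbf{Main obstacle.} The hardest part is the extraction of $m$ uniformly in $v$ from the charts of $f$, together with the rigorous transport argument from $P'$ to $P$. This combines the periodicity of the fan-level monoid-lifting condition with the finite \'etale cover provided by Proposition~\ref{5.16}; the latter ensures that once an $\F_v$-lift exists for a low-height witness $P'$ in a given stratum, ordinary Hensel on the finite \'etale cover transports the lift to any other $\F_v$-point of the same stratum. Once this bookkeeping is in place, log Hensel converts the closed-point data into the sought $k_v$-point.
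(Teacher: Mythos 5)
Your opening moves match the paper (properness and density reduce the problem to $V(k_v)\subseteq f(X(k_v))$, and Proposition \ref{logHensel} is indeed the engine), but the core of your argument has concrete gaps. First, the combinatorial claim on which your choice of $m$ rests is unjustified and in general false: whether the fan-level problem for $P$ is solvable is the question of whether $F(\sP)$ lies in $F(f)_*(F_X^s(\NN))$ for a suitable cone $s$, i.e.\ membership in the image of a homomorphism of free monoids $\NN^J\to\NN^I$, and this is \emph{not} determined by the height vector modulo the $\lcm$ of the multiplicities $m_{ij}$. For instance, for $\NN^2\to\NN$, $(a_1,a_2)\mapsto 2a_1+3a_2$ (a two-dimensional cone over a one-dimensional one), the realisable values form the numerical semigroup $\langle 2,3\rangle$, which contains $7$ but not $1$, although $1\equiv 7 \bmod 6$. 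Accordingly the paper's $m$ is not an lcm: it is the maximum, over cones $t$ of $F_Y$ and cones $s$ mapping to $t$, of the minimal height of an element of $F_Y^t(\NN)$ \emph{not} in $F(f)_*(F_X(\NN))$ and of the minimal height $m_{s,t}$ of an element of $F(f)_*(F_X^s(\NN))$; no periodicity is invoked, and no subdivision (Lemma \ref{one}) or Lemma \ref{lem:Denef_obs} is needed at this stage. Moreover your pivotal step, ``solvability for $P$ is equivalent to solvability for a height-$\leq m$ witness $P'$ with the same reduction stratum and congruent height vector'', is asserted rather than proved; even the \emph{existence} of such a $k_v$-witness with prescribed reduction and prescribed valuations of the boundary equations is nontrivial, and it is exactly what the paper's \emph{first} application of Proposition \ref{logHensel} (to the square over $(\Spec\,\O_v)^{\rm tr}$, using smoothness of $\sY$ over $\O_v$) provides.

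Second, your decomposition into (a) fan data and (b) an $\F_v$-point of the fibre omits the $\F_v^*$-part of the log-point datum, i.e.\ the angular components (multiplicative residues). The commutativity of the square to which log Hensel is applied over $\sP$ forces the relation $\ov\varphi(\pi_i(\sP))=b_i(R\bmod\pi_v)\prod_{j}\ov\varphi(\varpi_j(\sR))^{m_{ij}}$, and this cannot be obtained from Proposition \ref{5.16} together with Lang--Weil; those tools belong to the later Corollary \ref{cc} and would in any case require enlarging the set $S$ fixed before the statement, whereas the paper's proof of this proposition uses neither and keeps $S$ untouched. The paper resolves the difficulty by first constructing an auxiliary point $Q\in V(k_v)$ with $Q\bmod\pi_v=P\bmod\pi_v$, $F(\sQ)=r$ of height at most $m$ with the same ``in the image of $F(f)_*$'' status as $F(\sP)$, and with the \emph{same} angular components $\ov\varphi(\pi_i(\sQ))=\ov\varphi(\pi_i(\sP))$; the hypothesis then yields $R\in U(k_v)$ with $f(R)=Q$, and the closed point $R\bmod\pi_v$, the angular components of the $\varpi_j(\sR)$, and an element $a\in F_X^s(\NN)$ with $F(f)_*(a)=F(\sP)$ (available because $r$ and $F(\sP)$ have the same image status) furnish exactly the log-point datum over $P$ needed for the second application of log Hensel. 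Without the construction of $Q$ and this angular bookkeeping, your transport from the low-height witness to $P$ does not go through.
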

\begin{proof} 
For each $s\in F_X$ let $F_X^s$ be the open Kato subcone of $F_X$ defined by $s$.
Similarly, for each $t\in F_Y$ let $F_Y^t $ be the open Kato subcone of $F_Y$ defined by $t$.
The morphism $F(f):F_X\to F_Y$ induces a map
$F(f)_*:F_X(\NN)\to F_Y(\NN)$. For each $t\in F_Y$ define
$$m_t={\rm min}\{h_Y(r)\ |\ r\in F_Y^t(\NN),\ r\notin F(f)_*(F_X(\NN))\}.$$
(If the set in the right hand side is empty, we take $m_t=0$.)

For each $s\in F_X$ mapping to $t\in F_Y$ define
$$m_{s,t}={\rm min}\{h_Y(r)\ |\ r\in F(f)_*(F_X^s(\NN))\subset F_Y^t(\NN)\}.$$
Finally, let
$$m={\rm max}\{m_t, m_{s,t}\},$$
where $t\in F_Y$ and where $s \in F_X$ maps to $t \in F_Y$.

Let us prove that $m$ satisfies the conclusion of the proposition.
One implication is obvious, so we prove the other one: if
every  $P\in V(k_v)$ with $h_Y(F(\sP))\leq m$ is contained in $f(X(k_v))$,
we need to show that $Y(k_v)=f(X(k_v))$.  

It suffices to show that $V(k_v) \subseteq f(U(k_v))$. Indeed, for the topology of
$k_v$ the set $V(k_v)$
is dense $Y(k_v)$, whereas $f(X(k_v))$ is closed in $Y(k_v)$ because $f$ is proper.

Take any $P\in V(k_v)$. There exists an $r\in F_Y(\NN)$, $h_Y(r)\leq m$,
such that the image of $F(\sP)$ and $r$ are both contained in some $F_Y^t$ and, furthermore,
either both are in the image of some $F_X^s(\NN)$ or neither is contained in
$F(f)_*(F_X(\NN))$. 

Let $(E_i)_{i \in I}$ be the irreducible components of $E$
such that the stratum $U(t)$ defined by $t\in F_Y$ is a connected component of the locally closed subscheme (with reduced structure)
$$\left(\bigcap_{i\in I}E_i\right)\setminus \left(\bigcup_{i\notin I}E_i\right)\subset Y.$$
Write $r=\sum_{i\in I}r_i u_i$, where $r_i$ is a non-negative integer and $u_i \in F_Y(\mathbf{N})$ is the primitive generator of the one-dimensional cone corresponding to $E_i$, so that $r_i\not=0$ if and only if $i\in I$. The height of $r$ is then given by the formula
$h_Y(r)=\sum_{i\in I} r_i$.

We now construct a commutative diagram of log schemes
\begin{equation}\xymatrix{
(\Spec\,\F_v)^\dagger\ar[r]\ar[d] & (\Y_v,\sE_v) \ar[d]\\
(\Spec\,\O_v)^\dagger\ar[r] & (\Spec\,\O_v)^{\rm tr}}\label{uno}
\end{equation}

Here $(\Spec\,\O_v)^{\rm tr}$ stands for the scheme $\Spec\,\O_v$ equipped with the trivial log structure $\O_v^* \to O_v$.
The morphism $(\Spec\,\O_v)^\dagger\to (\Spec\,\O_v)^{\rm tr}$
is the forgetful morphism defined by the identity morphism of the underlying schemes
and by the natural morphism of monoids $\O_v^*\to \O_v\setminus\{0\}$.
The right hand vertical arrow is induced by the structure morphism $\Y_v\to \Spec\,\O_v$. 
Next, $(\Spec\,\F_v)^\dagger$ is the standard log point defined by the monoid 
$(\O_v\setminus\{0\})/(1+\pi_v\O_v)$. The left hand vertical arrow is the natural
morphism of log schemes. On the underlying schemes
the top horizontal arrow sends $\Spec\,\F_v$ to $P\bmod\pi_v$.
The commutativity of (\ref{uno}) as a diagram of schemes is clear.

We need to define the top horizontal arrow as a morphism of log schemes.
Let $A$ be an $\O_v$-algebra such that
$\Spec\,A$ is an affine neighbourhood of $P\bmod\pi_v$ in $\sY_v$.
We can assume that $A$ contains a local equation
$\pi_i$ for the Zariski closure of $E_i$ in $\sY_v$, where $i\in I$.
Let $S\subset A$ be the multiplicative system generated by the $\pi_i$
and let $S^{-1}A$ be the localisation of $A$ with respect to $S$.
The log scheme $(\sY,\sE)$ is defined by the subsheaf of $\O_\sY$
consisting of functions invertible outside $\sE$. Hence 
the log scheme $(\Spec A,A\cap (S^{-1}A)^*)$ is an open subscheme of $(\sY,\sE)$.
Locally the diagram of monoids attached to (\ref{uno}) is
\begin{equation}\xymatrix{
(\O_v\setminus\{0\})/(1+\pi_v\O_v)& A\cap (S^{-1}A)^*\ar[l]\\
\O_v\setminus\{0\}\ar[u]^{\varphi}& \O_v^*\ar[l]\ar[u]}\label{uno_monoid}
\end{equation}

Here $\varphi$ is the canonical surjective morphism of monoids\footnote{This is related to 
Denef's notion of {\em multiplicative residue} \cite[Definition 3.2]{Denef}.
The projection to $\F_v^*$ is called the {\em angular component} in
\cite[Definition 3.5]{Denef}.}
$$\varphi:\O_v\setminus\{0\}\lra (\O_v\setminus\{0\})/(1+\pi_v\O_v)\cong\F_v^*\oplus\NN,$$
where the isomorphism depends on the choice of $\pi_v$. Let us denote by $\ov\varphi$
the composition of $\varphi$ with the projection to $\F_v^*$.
The choice of $\pi_i$, $i\in I$, gives an isomorphism 
$$A\cap (S^{-1}A)^*\cong A^*\times\NN^I.$$
To complete the definition of $(\Spec\,\F_v)^\dagger\to (\Y_v,\sE_v)$ we define
$$A^*\times\NN^I \lra \F_v^*\oplus\NN$$
as the morphism of monoids that sends $\alpha\in A^*$ to $(\alpha(P \bmod\pi_v), 0)$
and sends the canonical generator $1_i\in\NN^I$ to 
$(\varphi(\pi_i(\sP)), r_i).$
Checking the commutativity of the diagram (\ref{uno_monoid}) is straightforward.
The factor $\ov\varphi(\pi_i(\sP))$ is irrelevant for the commutativity of this diagram,
but will play a role later in the proof.

Since $\Y\to\Spec\,\O_S$ is smooth,
the morphism of log schemes $(\Y,\sE)\to (\Spec\,\O_v)^{\rm tr}$ is log smooth.
Applying Proposition \ref{logHensel}
to (\ref{uno}) produces a 
morphism $(\Spec\,\O_v)^\dagger\to (\Y,\sE)$ such that the two
resulting triangles commute. This gives a point $Q\in V(k_v)$ such that
$$Q\bmod\pi_v=P\bmod\pi_v,\quad  F(\sQ)=r, \quad
\ov\varphi(\pi_i(\sP))=\ov\varphi(\pi_i(\sQ)), \quad i\in I.$$
Since $h_Y(F(\sQ))\leq m$, our assumptions imply that $Q=f(R)$
for some $R\in U(k_v)$. In particular, $P\bmod\pi_v=f(R\bmod\pi_v)$
and $F(\sP)=F(f)_*(a)$ for some $a\in F_X^s(\NN)$, where $s\in F_X$ maps to $t\in F_Y$
and $F(\sR)\in F_X^s(\NN)$. Let $(D_j)_{j \in J}$ be the irreducible components of $D$
such that the stratum $U(s)$ defined by $s\in F_X$ is a connected component of
$$\left(\bigcap_{j\in J}D_j\right)\setminus \left(\bigcup_{j\notin J}D_j\right)\subset X.$$
Write $a=\sum_{j\in J}a_j u'_j$, where $a_j$ is an integer and $u'_j \in F_X(\mathbf{N})$ is the primitive generator of the one-dimensional cone corresponding to $D_j$, so that $a_j\not=0$ if and only if $j\in J$. 

Let us now construct a commutative diagram of log schemes
\begin{equation}\xymatrix{
(\Spec\,\F_v)^\dagger\ar[r]\ar[d] & (\X,\sD) \ar[d]\\
(\Spec\,\O_v)^\dagger\ar[r] & (\sY,\sE).}\label{duo}
\end{equation}
Here the vertical arrows are the obvious morphisms,
and the lower horizontal arrow is given by $\sP
\in \sY_v(\O_v)$. The upper horizontal arrow
sends $\Spec\,\F_v$ to $R\bmod\pi_v$. 
The commutativity of (\ref{duo}) as a diagram of schemes is clear
since $P\bmod\pi_v=f(R\bmod\pi_v)$.

Choose an $A$-algebra $B$
such that $\Spec\,B$ is an affine neighbourhood of $R\bmod\pi_v$ in $\sX_v$.
We can assume that $B$ contains a local equation
$\varpi_j$ for the closure of $D_j$ in $\sX_v$, where $j\in J$. Define
$(\Spec\,\F_v)^\dagger\to(\X,\sD)$
via the morphism of monoids 
$$B\cap(S^{-1}B)^*\cong B^*\oplus\NN^J\lra \F_v^*\oplus\NN$$
such that $\beta\in B^*$ goes to $(\beta(R \bmod\pi_v),0)$ and 
$$1_j\mapsto(\ov\varphi(\varpi_j(\sR)),a_j).$$
To check the commutativity of (\ref{duo}) as a diagram of log schemes we need to check
the commutativity of the diagram of monoids

\begin{equation*}\xymatrix{
\F_v^*\oplus\NN & \ar[l]B^*\oplus \NN^J\\
\O_v\setminus\{0\}\ar[u]^\varphi & \ar[u]\ar[l]A^*\oplus\NN^I.}
\end{equation*}
For $(\alpha,0) \in A^*\oplus\NN^I$ the commutativity follows from $P\bmod\pi_v=f(R\bmod\pi_v)$.
Write $$\pi_i=b_i\prod\varpi_j^{m_{ij}},$$
where $b_i\in B^*$. The right hand vertical map sends the element
$(0,1_i) \in A^* \oplus \NN^I$ to the element $(b_i,\sum_{j\in J} m_{ij}1_j)$ of $B^* \oplus \NN^J$, which then goes to 
\begin{equation}
\left(b_i(R\bmod\pi_v)\prod_{j\in J}\ov\varphi(\varpi_j(\sR))^{m_{ij}}
,\sum_{j\in J} m_{ij}a_j\right).\label{mmm}
\end{equation}
The lower horizontal arrow sends $1_i$ to $\pi_i(\sP)$ 
and then the left hand vertical arrow gives
$(\ov\varphi(\pi_i(\sP)),\val_v(\pi_i(\sP)))$.
This coincides with (\ref{mmm}). Indeed,
on the one hand, $\val_v(\pi_i(\sP))$ is the $i$-th coordinate of $F(\sP)=F(f)_*(a)$,
which equals $\sum_{j\in J} m_{ij}a_j$. On the other hand, $\ov\varphi(\pi_i(\sP))=\ov\varphi(\pi_i(\sQ))$
and $Q=f(R)$. This proves the commutativity of (\ref{duo}).

The morphism $(\sX,\sD)\to (\sY,\sE)$ is log smooth,
hence we can apply Proposition \ref{logHensel} to the diagram (\ref{duo}). We deduce that there is
a morphism $(\Spec\,\O_v)^\dagger\to (\sX,\sD)$ whose composition
with $f:(\sX,\sD)\to(\sY,\sE)$ is the morphism 
$(\Spec\,\O_v)^\dagger\to(\sY,\sE)$ given by $P$.
Therefore $P\in f(X(k_v))$. This proves that $Y(k_v)=f(X(k_v))$. 
\end{proof}

\subsection{Further modifications of the log smooth model} We now use the bound obtained in Proposition \ref{p0} to construct a specific modification of the morphism $f$.

\begin{proposition} \label{p1}
Let $f:X\to Y$ and $m \geq 1$ be as above. 
There exists a commutative diagram of morphisms of log regular schemes
\begin{equation}
\xymatrix{(X',D')\ar[r]^{\sigma_X}\ar[d]_{f'}&(X,D)\ar[d]^{f}\\
(Y',E')\ar[r]^{\sigma_Y}&(Y,E)}\label{p1d}
\end{equation}
where $X'$ and $Y'$ are smooth, proper and geometrically integral, 
$f'$ is dominant and log smooth, 
$\sigma_X$ and $\sigma_Y$ are log blow-ups 
and the following holds: if $P\in V(k_v)$ satisfies $1\leq h_Y(F(\sP))\leq m$,
then $\sigma_Y^{-1}(P)\bmod\pi_v$ is a smooth point of 
the reduction of $E'$.
\end{proposition}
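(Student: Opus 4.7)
The plan is to combine Proposition \ref{proposition:surjectivity} with Lemma \ref{one} to produce a controlled log blow-up of the base, and then to build a compatible smooth modification of the source. First, apply Lemma \ref{one} to $F_Y$ with the integer $m$ from Proposition \ref{proposition:surjectivity}, obtaining a smooth proper subdivision $\varphi\colon F' \to F_Y$ with $\varphi_*^{-1}(F_Y(\NN)_{\leq m}) \subseteq F'(\NN)_{\leq 1}$. The associated log blow-up $\sigma_Y\colon (Y',E') \to (Y,E)$ is proper, log \'etale and birational; moreover $Y'$ is smooth since the Kato fan $F_{Y'} = F'$ is smooth, and $Y'$ is proper and geometrically integral because these properties descend from $Y$ along the birational modification. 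Since $(\mathcal{Y},\mathcal{E})$ is already log regular with the same Kato fan $F_Y$, the same subdivision produces a log blow-up $(\mathcal{Y}',\mathcal{E}') \to (\mathcal{Y},\mathcal{E})$ of log regular $\mathcal{O}_S$-schemes after possibly enlarging $S$.

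For the source, I would form the fs fibre product $Z := X \times_{(Y,E)}^{\mathrm{fs}} (Y',E')$. By base-change stability of log blow-ups (\cite[Corollary 4.8]{Niziol}), the projection $Z \to X$ is itself a log blow-up, and $Z \to (Y',E')$ is log smooth as the base change of $f$, so $Z$ is log regular. Its Kato fan $F_Z$ may fail to be smooth, so I would invoke the existence of smooth proper subdivisions of arbitrary fs Kato fans (a form of Kato's resolution of singularities \cite[\S 10.4]{Kato2}) to obtain a log blow-up $X' \to Z$ with $X'$ smooth. Setting $\sigma_X\colon X' \to X$ to be the composition $X' \to Z \to X$ yields a log blow-up by stability under composition (\cite[Corollary 4.11]{Niziol}), and the composite $f'\colon X' \to Z \to Y'$ is log smooth as the composition of a log smooth morphism with a log \'etale one, and dominant since $f$ is. This step is the technical heart of the argument: one cannot simply subdivide $F_X$ independently of $\varphi$, as this would in general destroy the log smoothness of $f'$. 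Working inside the fs fibre product and resolving its singularities afterwards is essential and relies crucially on the base-change and composition stability of log blow-ups in the fs category.

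It remains to verify the height property. Given $P \in V(k_v)$ with $1 \leq h_Y(F(\mathcal{P})) \leq m$, the isomorphism $Y' \setminus E' \xrightarrow{\sim} V$ induced by $\sigma_Y$ together with the valuative criterion of properness yields a unique lift $\mathcal{P}' := \sigma_Y^{-1}(\mathcal{P}) \in \mathcal{Y}'(\mathcal{O}_v)$. Since $\varphi$ is a proper subdivision, the induced map $\varphi_*\colon F_{Y'}(\NN) \to F_Y(\NN)$ is a bijection, and $F(\mathcal{P}') = \varphi_*^{-1}F(\mathcal{P})$ has height at most $1$ by the construction of $\varphi$. Since $F(\mathcal{P})$ has positive height it is not the zero map, so $F(\mathcal{P}')$ is not the zero map either, forcing $h_{Y'}(F(\mathcal{P}')) = 1$ exactly. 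The smoothness of $F_{Y'}$ then implies that the relevant Kato subcone is one-dimensional, namely $\mathrm{Spec}\,\NN$, and that the dual map $\NN \to \NN$ sends $1$ to $1$; unravelling the notation, this means $\mathcal{P}' \bmod \pi_v$ lies on exactly one irreducible component of $\mathcal{E}'$ and meets it transversally, so it is a smooth point of the reduction of $E'$, as required.
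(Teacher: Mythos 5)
Your proposal is correct and follows essentially the same route as the paper: take the log blow-up of $(Y,E)$ given by the subdivision from Lemma \ref{one}, obtain $(X',D')$ as the fs base change (a log blow-up of $(X,D)$ by Nizio\l's results), and then perform a further log blow-up to make the source smooth, the composite to $(Y',E')$ remaining log smooth. Your explicit verification of the height property (lifting $\sP$ via properness, using bijectivity of $\varphi_*$ and smoothness of the fan to force a one-dimensional cone with generator mapping to $1$) is a welcome elaboration of a step the paper leaves implicit, but it is the same argument in substance.
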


The last statement of the proposition implies that $\sigma_Y^{-1}(P)\bmod\pi_v$ 
belongs to exactly one geometric irreducible component of $E'$.

\begin{proof} We let $\sigma_Y:(Y',E')\to(Y,E)$ be the log blow-up
defined by the subdivision of the Kato fan of $(Y,E)$ as in
Lemma \ref{one}. We see that $Y'$ is smooth and proper and $E'$ is a strict normal crossings divisor (so $(Y',E')$ is log regular).  

Recall that in the category of fs log schemes, log blow-ups are stable under composition \cite[Corollary 4.11]{Niziol} and 
under base change \cite[Proposition 4.5, Corollary 4.8]{Niziol}. 

Indeed, Nizio{\l} shows in
\cite[Corollary 4.8]{Niziol} that the fs fibred product 
of $(X,D)$ and $(Y',E')$ over $(Y,E)$ is a log blow-up $\sigma_X:(X',D')\to (X,D)$ making 
(\ref{p1d}) commute. More precisely, if $(Y',E')$ is the log blow-up of $(Y,E)$ 
in a coherent ideal ${\mathcal I}\subset{\mathcal M}_Y$, then
$(X',D')$ is the log blow-up of $(X,D)$ in the inverse image ideal
${\mathcal I}{\mathcal M}_X$.

We note that $(X',D')$ is a
log regular scheme by \cite[Theorem 8.2]{Kato2}. Since log smooth morphisms
are stable under base change, the morphism $f':(X',D')\to(Y',E')$ is log smooth.
There is a further log blow-up $(X'',D'')\to (X',D')$ 
such that $X''$ is smooth as a scheme (see \cite[(10.4)]{Kato2} or \cite[Theorem 5.8]{Niziol}).
The composition $(X'',D'')\to (Y',E')$ is still log smooth. Therefore
replacing $(X',D')$ by $(X'',D'')$ yields the result.
\end{proof}


Recall that the codimension 1 strata of $(Y',E')$ are precisely the irreducible components
of the smooth locus of $E'$. The morphisms $\sigma_X$ and $\sigma_Y$ in
Proposition \ref{p1} are log blow-ups, so they induce isomorphisms $X'\setminus D'\cong U=X\setminus D$
and $Y'\setminus E'\cong V=Y\setminus E$.

\begin{corollary} \label{cc}
Let $f':(X',D')\to (Y',E')$ be as in Proposition \ref{p1}. Then there exists a finite set $S\subset\Omega_{k,\f}$
such that for $v\notin S$ we have $f(X(k_v))=Y(k_v)$ if and only if  $s_{f',\kappa(Z)}(v)=1$ for each 
codimension $1$ stratum $Z$ of $(Y',E')$.
\end{corollary}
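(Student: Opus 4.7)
The plan is to prove both implications. Throughout I enlarge $S$ finitely many times as required. Since $\sigma_X$ and $\sigma_Y$ in Proposition \ref{p1} are proper birational, Lemma \ref{lem:Denef_obs} and properness give that $f(X(k_v))=Y(k_v)$ is equivalent to $\sX'(\O_v) \to \sY'(\O_v)$ being surjective, so I argue throughout with $f'$. For necessity ($\Rightarrow$) I would argue by contrapositive: if some codimension $1$ stratum $Z$ of $(Y',E')$ has $s_{f',\kappa(Z)}(v)<1$, then Theorem \ref{thm:not_surjective}, applied to $f'$ and the codimension $1$ point $\eta_Z \in (Y')^{(1)}$, produces a point of $\sY'(\O_v)$ with no preimage in $\sX'(\O_v)$. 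Since $E'$ has only finitely many irreducible components there are only finitely many such strata, and I take $S$ large enough that this theorem applies to each simultaneously. Codimension $1$ points of $Y'$ outside $E'$ contribute nothing, because $f'$ is smooth with geometrically integral generic fibre over $Y' \setminus E'$, so their $s$-invariants are automatically $1$.

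For sufficiency ($\Leftarrow$), assume $s_{f',\kappa(Z)}(v)=1$ for every codimension $1$ stratum $Z$ of $(Y',E')$. By Proposition \ref{p0} it suffices to show every $P \in V(k_v)$ with $h_Y(F(\sP)) \leq m$ lies in $f(X(k_v))$. If $h_Y(F(\sP))=0$, then $P \bmod \pi_v \in V(\F_v)$, and the fibre of $\sX \to \sY$ over it is smooth and geometrically integral; for $v \notin S$ large enough, Lang--Weil produces an $\F_v$-point of the fibre, which the smooth Hensel lemma lifts to $X(k_v)$ over $P$. Now suppose $1 \leq h_Y(F(\sP)) \leq m$ and set $P' := \sigma_Y^{-1}(P) \in (Y' \setminus E')(k_v)$. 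Proposition \ref{p1} tells us that $P' \bmod \pi_v$ is a smooth point of $E'$, hence lies in a unique codimension $1$ stratum $Z$ of $(Y',E')$. Proposition \ref{5.16} shows that $\Irr^1_{(f')^{-1}(Z)/Z}$ is representable by a finite \'etale cover of $Z$, so Corollary \ref{cor:sur} combined with $s_{f',\kappa(Z)}(v)=1$ yields that $(f')^{-1}(P' \bmod \pi_v)$ is a split $\F_v$-variety. It therefore contains a multiplicity $1$ geometrically integral component whose smooth locus has an $\F_v$-point $R$ by Lang--Weil.

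The main obstacle is the final step: lifting $R$ to an $\O_v$-point of $X'$ above $P'$. I would handle it by mimicking the construction of diagram \eqref{duo} in the proof of Proposition \ref{p0}. The simplification here is that, since $E'$ is smooth at $P' \bmod \pi_v$, the log structure on $(Y',E')$ has rank one there and is described by a single local equation $\pi_i$ for $E'$; the morphism $P'$ extends to a morphism of log schemes $(\Spec \O_v)^\dagger \to (Y',E')$ given by this single datum. Using Proposition \ref{proposition:constancy} to control the multiplicities along the log stratum through $R$, I would equip $R$ with the log structure inherited from $(X',D')$ to obtain a morphism $(\Spec \F_v)^\dagger \to (X',D')$ fitting into a commutative square with $P'$ via $f'$. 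Since $f'$ is log smooth, Proposition \ref{logHensel} then supplies the diagonal lift $(\Spec \O_v)^\dagger \to (X',D')$, hence a $k_v$-point of $X'$ above $P'$. Lemma \ref{lem:Denef_obs} once more translates this into a $k_v$-point of $X$ above $P$, completing the argument.
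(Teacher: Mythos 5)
Your overall architecture matches the paper's: necessity via Theorem \ref{thm:not_surjective}, and sufficiency by combining Propositions \ref{p0} and \ref{p1} with Proposition \ref{5.16}, Corollary \ref{cor:sur} and Lang--Weil to produce a smooth $\F_v$-point $R$ of the special fibre. The one place where you depart from the paper is the final lifting step, and that is where your argument has a genuine gap. You assert that equipping $R$ ``with the log structure inherited from $(X',D')$'' yields a morphism $(\Spec\,\F_v)^\dagger\to(\sX',\sD')$ fitting into a commutative square over $(\Spec\,\O_v)^\dagger\to(\sY',\sE')$. This is not automatic: the pullback of $\mathcal{M}_{X'}$ to $R$ has characteristic monoid $\NN^{J}$, where $J$ indexes the branches of $D'$ through $R$, and a morphism from the standard log point compatible with $\sP'$ requires choosing exponents $a_j\geq 1$ and angular components $c_j\in\F_v^*$ with $\sum_{j}m_ja_j=\val_v(\pi_i(\sP'))$ and $b(R)\prod_j c_j^{m_j}=\ov\varphi(\pi_i(\sP'))$, where $f'^*\pi_i=b\prod_j\varpi_j^{m_j}$ locally at $R$. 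This is exactly the data entering diagram \eqref{duo}, and in the proof of Proposition \ref{p0} it was supplied by an actual point of $U(k_v)$ mapping to the auxiliary point $Q$, not merely by an $\F_v$-point of the fibre. Such choices need not exist in general: if the branch through $R$ had multiplicity $2$ in $f'^*E'$ and $\val_v(\pi_i(\sP'))$ were odd, or if $\ov\varphi(\pi_i(\sP'))/b(R)$ were a non-square, the required square of log schemes would simply not exist. Citing Proposition \ref{proposition:constancy} does not settle this, since it controls multiplicities of fibre components along strata, not the local monoid picture at the particular point $R$.

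What saves your argument, and what you must prove, is that a smooth point $R$ of the fibre over a smooth point $y$ of $E'$ lies on exactly one branch $D_{j_0}$ of $D'$, with $m_{j_0}=1$: by condition (3) of Theorem \ref{ADK} and smoothness of $E'$ at $y$, every component of $D'$ meeting $(f')^{-1}(y)$ maps into the unique component $E_i'$ through $y$, hence has $m_j\geq 1$; and if $\sum_{j\ni R}m_j\geq 2$, then $f'^*\pi_i\in\mathfrak{m}_R^2$, which together with the equidimensionality of the fibre (flatness of $f'$ over the smooth locus of $\sE'$, by \cite[Corollary 4.4(ii), Corollary 4.5]{Kato1}) contradicts smoothness of the fibre at $R$. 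Once $m_{j_0}=1$ is known, your log point exists (take $a_{j_0}=\val_v(\pi_i(\sP'))$ and $c_{j_0}=\ov\varphi(\pi_i(\sP'))/b(R)$) and Proposition \ref{logHensel} finishes the proof. Note, however, that the same flatness statement already gives the paper's much shorter route: flatness plus smoothness of the fibre at $R$ means $f'$ itself is smooth at $R$, so the classical Hensel lemma lifts $R$ to a $k_v$-point of $X'$ above $P'$, with no need for the log-point construction at all.
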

\begin{proof} Let $f':(\sX',\sD')\to(\sY',\sE')$ be a model of $(X',D')\to(Y',E')$ over $\OO_{k,S}$. 
We enlarge the finite set of places $S$ to ensure that
$f':(\sX',\sD')\to(\sY',\sE')$ is a proper, log smooth morphism of log regular schemes over $\OO_{k,S}$
such that the induced morphism of Kato fans is the same as for $(X',D')\to(Y',E')$. 

Let $W=Y'\setminus E'_{\rm sing}$ and let $\W\subset\Y'$ be the complement to the Zariski closure of
$E'_{\rm sing}$ in $\Y'$. For a codimension 1 stratum $Z$ of $(Y',E')$ 
we denote by $\sZ$ the Zariski closure of $Z$ in $\W$. We now further enlarge $S$ to ensure that the following properties hold.

\begin{itemize}
\item[(1)] For any $v\notin S$ and for any $x\in \W(\F_v)$ such that
the fibre $(f')^{-1}(x)$ is split, there is a smooth $\F_v$-point in $(f')^{-1}(x)$. 
This is arranged via the Lang--Weil inequality.
\item[(2)] For each codimension 1 stratum $Z$ of $(Y',E')$, for any $v\notin S$ such that 
$s_{f',\kappa(Z)}(v)=1$ and for any $x\in \sZ(\F_v)$ the fibre $(f')^{-1}(x)$ is split.
This is achieved by applying Corollary \ref{cor:sur} to the morphism $(f')^{-1}(Z)\to Z$.
Indeed, in this case the assumption of this corollary is satisfied by
Proposition \ref{5.16}.
\item[(3)] For any $v\notin S$, if there exists a codimension 1 stratum $Z$ of $(Y',E')$ 
such that $s_{f',\kappa(Z)}(v)<1$,
then $Y(k_v)\not=f(X(k_v))$. This follows from Theorem \ref{thm:not_surjective}.
\end{itemize}

Let us now prove the statement of the corollary. Fix any $v\notin S$. One implication is immediate: if there exists a codimension 1 stratum $Z$ of $(Y',E')$ with
$s_{f',\kappa(Z)}(v)<1$, then (3) implies that $Y(k_v)\not=f(X(k_v))$.

Conversely, assume that $s_{f',Z}(v)=1$ for each codimension 1 stratum $Z$ of $(Y',E')$. 
By Propositions \ref{proposition:surjectivity} and \ref{p1},
it suffices to prove that if $P\in V(k_v)$ is such that $P\bmod\pi_v$
is in $\W\times_{\Spec\,O_S}\Spec\,\F_v$, then $P\in f(X(k_v))$. Write $\U=\X'\setminus\sD'$ and $\V=\Y'\setminus\sE'$.

Let us first consider the case when $P\bmod\pi_v\in\V(\F_v)$.
The morphism $\left.f'\right|_{\mathcal{U}}:\U\to\V$ is smooth and proper, with geometrically integral generic fibre.
This implies that all fibres of $\left.f'\right|_{\mathcal{U}}$ are smooth and geometrically integral; in particular, such is $(f')^{-1}(P\bmod\pi_v)$. By (1) this fibre has a smooth $\F_v$-point, which is clearly also a smooth point of $f'$.

In the case when $P\bmod\pi_v\in\sZ(\F_v)$
for some codimension 1 stratum $Z$ of $(Y',E')$, a combination of (1) and (2) implies
that the fibre $(f')^{-1}(P\bmod\pi_v)$ has a smooth $\F_v$-point. But a smooth point in this fibre is actually a smooth point of the morphism $f'$ as well, since $\left.f'\right|_{(f')^{-1}(\mathcal{W})}$ is a flat morphism, by \cite[Corollary 4.4.(ii), Corollary 4.5]{Kato1}. 

An application of the classical version of Hensel's lemma now allows one to lift such an $\F_v$-point to a $k_v$-point of $X'$ in both of the cases considered above, as required. \end{proof}

\subsection{Proofs of the main theorems} Theorem~\ref{thm:CT} now follows immediately from Corollary \ref{cc}, and 
Theorem \ref{maintheorem} 
follows from Lemma~\ref{lem:is_pseudo-split}, Corollary \ref{non-sur} and Theorem~\ref{thm:CT}.
Finally, Theorem \ref{cor:frobenian} is a formal consequence of Lemma~\ref{lem:s=1_density} and Theorem~\ref{thm:CT}, because the intersection of finitely many frobenian sets is frobenian (see \cite[\S 3.3.1]{Ser12}). \qed

\medskip

\begin{remark} \label{remark:Denef} Let us finish by explaining how to recover Denef's result (Theorem~\ref{deneftheorem}) from our Theorem~\ref{maintheorem}. The subtlety is the following. Denef imposes a condition on modifications $f': X' \to Y'$ of $f$ such that the generic fibres of $f$ and $f'$ are isomorphic. We consider \emph{arbitrary} modifications $f': X' \to Y'$ of $f$, since using Theorem  \ref{ADK} forces us to consider modifications for which the generic fibre changes birationally, and we impose a \emph{weaker} condition for these modifications (pseudo-splitness instead of splitness). 

But in fact we could just as well have imposed our pseudo-splitness condition only for the modifications $f': X' \to Y'$ with generic fibre isomorphic to the generic fibre of $f$, as does Denef. It turns out that this is enough to guarantee that the same condition holds for arbitrary modifications, as we will now explain.

The argument is the following. Let $f: X \to Y$ be a dominant morphism of smooth, proper, geometrically integral varieties over $k$, with geometrically integral generic fibre. Assume that for every modification $f': X' \to Y'$ of $f$ with $X'$ and $Y'$ smooth
such that the generic fibres of $f$ and $f'$ are isomorphic and for every $D~\in~(Y')^{(1)}$, the fibre $(f')^{-1}(D)$ is a pseudo-split $\kappa(D)$-variety. Let us check that the same property then holds for an arbitrary modification $f': X' \to Y'$ of $f$ with $X'$ and $Y'$ smooth. 

So let $f': X' \to Y'$ be such an arbitrary modification. Let $Z$ be the unique irreducible component of $X \times_Y Y'$ which dominates $X$, equipped with the natural morphism $Z \to Y'$. Note that the generic fibre of $Z \to Y'$ is the same as the generic fibre of $f$. Choose a desingularisation $\widetilde{Z} \to Z$ such that the composition
$\widetilde{Z} \to Y'$ still has the same generic fibre as $f$. Let $D \in (Y')^{(1)}$ and let $R = \OO_{Y',D}$
be the corresponding discrete valuation ring. Consider the $R$-scheme $\widetilde{Z} \times_{Y'} \Spec R$. This is a regular scheme
with the same generic fibre as $f$. The assumptions imply that its special fibre is pseudo-split.

However the $R$-scheme $X' \times_{Y'} \Spec R$
is regular and its generic fibre is smooth and birational to the generic fibre of $f$. Therefore Lemma \ref{birationality} implies
that its special fibre is pseudo-split as well, which proves our claim.
\end{remark}

\begin{remark} \label{remark:Liang} Yongqi Liang observed that exactly the same proofs yield in fact a slightly stronger version of Theorem \ref{maintheorem}. Indeed, the conclusion that $f(X(k_v)) = Y(k_v)$ for almost all places $v$ (assuming the pseudo-splitness assumption) may be replaced by the following stronger conclusion: there exists a finite set of places $S$ of $k$ such that if $K/k$ is a finite field extension and if $w$ is a place of $K$ which does not lie above any of the places in $S$, then $f(X(K_w)) = Y(K_w)$. \end{remark}


\begin{thebibliography}{xx}
\footnotesize
\bibitem{ACMUW}
{D. Abramovich, Q. Chen, S. Marcus, M. Ulirsch, J. Wise}, {Skeletons and fans of logarithmic structures}.  \emph{Nonarchimedean and tropical geometry}, 287--336, Simons Symp., Springer, 2016. 

\bibitem{AbramovichDenefKaru}
{D. Abramovich, J. Denef, K. Karu}, {Weak toroidalization over non-closed fields}.  \emph{Manuscripta Mathematica} 142$/$1 (2013), 257--271.

\bibitem{AbramovichKaru}
{D. Abramovich, K. Karu}, {Weak semistable reduction in characteristic $0$}. \emph{Inventiones Mathematicae} 139$/$2 (2000), 241--273.

\bibitem{Ax}
{J. Ax}, {The elementary theory of finite fields}.
\emph{Annals of Mathematics} 88$/$2 (1968), 239--271.

\bibitem{AxKochen}
{J. Ax, S. Kochen}, {Diophantine problems over local fields, I.} \emph{American Journal of Mathematics} 87$/$3 (1965), 605--630.


\bibitem{Cao} Y. Cao, Denef's proof of Colliot-Th\'{e}l\`{e}ne's conjecture, MSc thesis at Orsay. \\ (http://www.math.u-psud.fr/$\sim$cao/CTconj.pdf)

\bibitem{CTCRAS}
{J.-L. Colliot-Th\'el\`ene}, {Fibre sp\'eciale des hypersurfaces de petit degr\'e.} \emph{Comptes Rendus Math\'ematiques de l'Acad\'emie des Sciences de Paris} 346$/$1--2 (2008), 63--65.

\bibitem{CIME}
J.-L. Colliot-Th\'el\`ene,
Vari\'{e}t\'{e}s presque rationnelles, leurs points rationnels et leurs d\'{e}g\'{e}n\'{e}rescences.
In \emph{Arithmetic Geometry (CIME 2007)}, Springer LNM 2009 (2011), 1--44.

\bibitem{CTtores}
{J.-L. Colliot-Th\'el\`ene}, {Groupe de Brauer non ramifi\'e d'espaces homog\`enes de tores.} \emph{Journal de Th\'eorie des Nombres de Bordeaux} 26$/$1 (2014), 69--83. 

\bibitem{CTSansuc} 
{J.-L. Colliot-Th\'el\`ene, J.-J. Sansuc}, {Principal homogeneous spaces under flasque tori: applications}. \emph{Journal of Algebra} 106$/$1 (1987), 148--205.


\bibitem{Denefbis}
{J. Denef}, {Geometric proofs of theorems of Ax--Kochen and Ersov}. \emph{American Journal of Mathematics} 138$/$1 (2016), 181--199.

\bibitem{Denef}
{J. Denef}, {Proof of a conjecture of Colliot-Th\'el\`ene}. Preprint (arXiv:1108.6250).



\bibitem{FriedSacerdote}
{M. Fried, G. Sacerdote}, Solving Diophantine problems over all residue class fields of a number field and all finite fields. 
\emph{Annals of Mathematics} 104$/$2 (1976), 203--233.

\bibitem{GR} {O. Gabber, L. Ramero}, {Foundations for almost ring theory} (v7), arXiv:math/0409584.

\bibitem{Gre66}
{M. Greenberg}, 
{Rational points in Henselian discrete valuation rings}. 
\emph{Publications Math\'ematiques de l'IH\'ES} 31 (1966) 59--64.

\bibitem{EGAIV}
A. Grothendieck, 
\'{E}l\'ements de g\'eom\'etrie alg\'ebrique. {IV}. \'{E}tude locale des sch\'emas et des morphismes de sch\'emas. {III}.
\emph{Publications Math\'ematiques de l'IH\'ES} 28, 1966.

\bibitem{GHMS04}
T. Graber, J. Harris, B. Mazur, J. Starr,
{Jumps in Mordell-Weil rank and arithmetic surjectivity}.
In \emph{Arithmetic of higher-dimensional algebraic varieties} (Palo Alto, CA, 2002), 141-147, Progr. Math., 226, Birkh\"{a}user Boston, Boston, MA, 2004. 


\bibitem{illusie}
L. Illusie, M. Temkin, Gabber's modification theorem (log smooth case). \emph{Travaux de Gabber sur l'uniformisation locale et la cohomologie \'etale des sch\'emas quasi-excellents}.  Ast\'erisque 363--364 (2014), 167--212. 

\bibitem{JL15}
{J. Jahnel, D. Loughran}, {The Hasse principle for lines on del Pezzo surfaces}. 
\emph{International Mathematics Research Notices},  {23} (2015), 12877--12919.

\bibitem{Kato1}
{K. Kato}, {Logarithmic structures of Fontaine--Illusie}. \emph{Algebraic Analysis, Geometry \& Number Theory}. Johns Hopkins University (1989), 191--224.

\bibitem{Kato2}
{K. Kato}, {Toric singularities}. \emph{American Journal of Mathematics} 116$/$5 (1994), 1073--1099.

\bibitem{Knu71}
{D. Knutson}, \emph{Algebraic spaces},
Lecture Notes in Mathematics 203. Springer--Verlag, Berlin--New York, 1971.

\bibitem{LW54} {S.~Lang, A.~Weil},
{Number of points of varieties in finite fields}.
{\em American Journal of Mathematics} 76$/$4 (1954), 819--827.

\bibitem{LS}
{D. Loughran, A. Smeets}, {Fibrations with few rational points}. \emph{Geometric and Functional Analysis} 26$/$5 (2016), 1449--1482.




\bibitem{Mil06}
{J.S. Milne}, 
\textit{Arithmetic duality theorems}. Second ed., BookSurge, Charleston, SC, 2006.


\bibitem{Niziol}
{W. Nizio\l}, {Toric singularities: log blow-ups and global resolutions}. \emph{Journal of Algebraic Geometry} 15$/$1 (2006), 1--29.

\bibitem{Pas89}
{J.~Pas}, 
{Uniform $p$-adic cell decomposition and local zeta functions}.
\emph{Journal f\"{u}r die reine und angewandte Mathematik} 399 (1989), 137--172.

\bibitem{Rom11}
{M.~Romagny}, 
{Composantes connexes et irr\'{e}ductibles en familles}.
\emph{Manuscripta Mathematica} 136$/$1 (2011), 1--32.



\bibitem{Ser03} {J.-P. Serre}, {On a theorem of Jordan}.
{\em Bulletin of the  AMS} {40} (2003), no.~4, 429--440.


\bibitem{Ser12}
{J.-P. Serre}, \emph{Lectures on $N_X(p)$.}  CRC Research Notes in Mathematics 11. CRC Press, Boca Raton, FL, 2012.

\bibitem{Sko96}
{A. Skorobogatov}, {Descent on fibrations over the projective line}.
\emph{American Journal of Mathematics} 118$/$5 (1996), 905--923.

\bibitem{Son08} {J.~Sonn}, {Polynomials with roots in ${\QQ}_p$ for all $p$}.
{\em Proceedings of the AMS} {136} (2008), no.~6, 1955--1960.

\bibitem{Sza09}
{T. Szamuely}, 
\textit{Galois groups and fundamental groups}. 
Cambridge Studies in Advanced Mathematics, {\bf117}. Cambridge University Press, Cambridge, 2009.

\bibitem{Stacksproject}
{The Stacks Project}, http://stacks.math.columbia.edu, 2017.

\end{thebibliography}
\end{document}